\documentclass[12pt]{amsart}
\usepackage{latexsym,amsmath, amscd, amssymb, amsthm, euscript, mathrsfs, stmaryrd}
\usepackage[all]{xy}
\usepackage{hyperref}
\usepackage[colorinlistoftodos]{todonotes}
\usepackage[cal=boondoxo, scr=boondoxo]{mathalfa}
\usepackage{graphicx}
\graphicspath{ {./images/} }

\vfuzz2pt 
\hfuzz2pt 
\addtolength{\textwidth}{+4cm} \addtolength{\textheight}{+2cm}
\hoffset-2cm 
\voffset-1cm \setlength{\parskip}{5pt}
\setlength{\parskip}{5pt} 


\newtheorem{thm}[subsection]{Theorem}

\newtheorem{thm/def}[subsection]{Theorem/Definition}

\newtheorem{cor}[subsection]{Corollary}

\newtheorem{lem}[subsection]{Lemma}

\newtheorem{prop}[subsection]{Proposition}

\theoremstyle{definition}

\newtheorem{defn}[subsection]{Definition}

\theoremstyle{definition}

\theoremstyle{definition}

\newtheorem{rem}[subsection]{Remark}

\newtheorem{example}[subsection]{Example}
\numberwithin{equation}{subsection}

\newtheorem{pg}[subsection]{}


\newcommand{\mc}{\mathcal }

\newcommand{\dlog}{\text{\rm dlog}}

\newcommand{\Sec}{{\text{\rm Sec}}}

\newcommand{\mls}{\mathscr}

\newcommand{\cPic}{\mathscr{P}\!{\it ic}}

\DeclareMathOperator{\Pic}{\text{\rm Pic}}

\newcommand{\et}{\text{\rm \'et}}


\usepackage{tikz-cd}

\newcommand{\cX}{\mathcal{X}}
\newcommand{\cY}{\mathcal{Y}}
\newcommand{\cR}{\mathcal{R}}
\newcommand{\cAut}{\mathcal{A}ut}
\newcommand{\Aut}{\text{\rm Aut}}

\newcommand{\cSec}{\mathcal{S}ec}
\newcommand{\lotimes}{\otimes ^{\mathbf{L}}}
\newcommand{\rR}{\mathrm{R}}
\newcommand{\rL}{\mathrm{L}}
\newcommand{\BR}{\text{\rm BR}}

\usepackage{rotating}

\AtBeginDocument{%
   \def\MR#1{}
}

\begin{document}

\title{Twisted derived categories and Rouquier functors}

 \author{Martin Olsson}
 
 \begin{abstract}
 We study the algebraic structure of the automorphism group of the derived category of coherent sheaves on a  smooth projective variety twisted by a Brauer class.  Our main results generalize results of Rouquier in the untwisted case. 
 \end{abstract}

\maketitle

\section{Statements of results}

For a noetherian algebraic stack $\cX $ we write $D(\cX )$ for the bounded derived category of coherent sheaves on $\cX $.

Let $X$ and $Y$ be smooth projective varieties over an algebraically closed  field $k$ related by a derived equivalence $\Phi :D(X)\rightarrow D(Y)$; that is, an equivalence between their bounded derived categories of coherent sheaves. A fundamental result of Rouquier \cite{Rouquier} is that the equivalence induces (in a manner discussed below) an isomorphism of group schemes 
\begin{equation}\label{E:Rouquieriso}
\Pic ^0_X\times \Aut ^0_X\simeq \Pic ^0_Y\times \Aut ^0_Y,
\end{equation}
where the superscripts ``0'' refer to the connected components of the identity.  The purpose of this article is to explain how this result generalizes to the case of twisted derived categories.

Let  $X$ be a smooth geometrically connected projective variety over $k$ and let $\cX \rightarrow X$ be a $\mathbf{G}_m$-gerbe with associated class $\alpha \in H^2(X, \mathbf{G}_m)$. 
A key role in our interpretation of the Rouquier isomorphism in this context is played by the automorphism group $\Aut _{\cX }$ of the stack $\cX $.    Let $\cAut _{\cX }$ be the fibered category which to any $k$-scheme $S$ associates the groupoid of isomorphisms $\cX \rightarrow \cX$ inducing the identity on the stabilizer group schemes $\mathbf{G}_m$.

\begin{thm}\label{T:theorem1} (1) The fibered category $\cAut _{\cX }$ is an algebraic stack locally of finite type over $k$ which is a $\mathbf{G}_m$-gerbe over a group algebraic space $\Aut _{\cX }$.

(2) If $\Aut ^0_{\cX }\subset \Aut _{\cX }$ denotes the connected component of the identity then there is an exact sequence of group algebraic spaces
$$
1\rightarrow \Pic ^0_X\rightarrow \Aut ^0_{\cX }\rightarrow \Aut ^0_X.
$$

(3)
 If $\cX $ is the pushout of a $\mu _N$-gerbe for $N>0$ invertible in $k$ (this always holds if $k$ has characteristic $0$) then the map $\Aut ^0_{\cX }\rightarrow \Aut ^0_X$ is surjective.
\end{thm}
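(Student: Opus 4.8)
The plan is to deduce (3) from (1), (2), and a description of the natural forgetful homomorphism $\pi\colon\Aut_{\cX}\to\Aut_X$, which sends an automorphism of $\cX$ to the automorphism it induces on the rigidification $X$. First, by Giraud's theory an object of $\cAut_{\cX}(S)$ lying over a given $g\in\Aut_X(S)$ is the same datum as an isomorphism of $\mathbf{G}_m$-gerbes $\cX_S\simeq g^\ast\cX_S$ over $X_S$, and such an isomorphism exists exactly when $g^\ast\alpha=\alpha$ in $H^2(X_S,\mathbf{G}_m)$; hence the image of $\pi$ is the closed subgroup $\Aut_{X,\alpha}\subseteq\Aut_X$ stabilizing $\alpha$. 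Since $\Aut_X$ is a group algebraic space locally of finite type over $k$ (Grothendieck) and $\Aut_{\cX}$ is one by (1), the homomorphism $\pi$ is faithfully flat onto $\Aut_{X,\alpha}$. Granting that $\Aut^0_X\subseteq\Aut_{X,\alpha}$, I would conclude as follows: base change of $\pi$ along the open-and-closed immersion $\Aut^0_X\hookrightarrow\Aut_{X,\alpha}$ shows $\pi^{-1}(\Aut^0_X)\to\Aut^0_X$ is faithfully flat; $\Aut^0_{\cX}$ is open in, and is the identity component of, $\pi^{-1}(\Aut^0_X)$ (being connected, it maps into $\Aut^0_X$); and a faithfully flat homomorphism of group spaces onto a connected group restricts to a faithfully flat, hence surjective, map on identity components. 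This yields (3).

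The heart of the matter is the inclusion $\Aut^0_X\subseteq\Aut_{X,\alpha}$ — that every automorphism in the identity component of $\Aut_X$ fixes $\alpha$ — and this is precisely where the hypothesis is used. Write $\alpha$ as the image of a class $\alpha'\in H^2(X,\mu_N)$ under the map induced by $\mu_N\hookrightarrow\mathbf{G}_m$; then, since all the group spaces in sight are locally of finite type, it suffices to show $g^\ast\alpha'=\alpha'$ for every Artinian point $g$ of $\Aut^0_X$. Because $N$ is invertible in $k$ the sheaf $\mu_N$ is finite \'etale, $X$ is proper, so $H^2(X,\mu_N)$ is a \emph{finite} group, and this group is unchanged after base change to an Artin local $k$-algebra (topological invariance of the \'etale site). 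Applying smooth and proper base change to the action and projection maps $\Aut^0_X\times X\rightrightarrows X$, the two pullbacks of $\alpha'$ differ by a class with no component along $H^0(\Aut^0_X,\mu_N)\otimes H^2(X,\mu_N)$ in the K\"unneth decomposition; restricting to an Artinian point $g$ kills all the remaining components, so $g^\ast\alpha'=\alpha'$, whence $g^\ast\alpha=\alpha$. Finally, in characteristic $0$ the hypothesis holds automatically: $H^2(X,\mathbf{G}_m)=\Br(X)$ is torsion, so $\alpha$ has finite order $N$ (invertible in $k$), and the Kummer sequence $1\to\mu_N\to\mathbf{G}_m\xrightarrow{\,N\,}\mathbf{G}_m\to1$ gives the surjectivity of $H^2(X,\mu_N)\to H^2(X,\mathbf{G}_m)[N]$, so $\alpha$ lifts and $\cX$ is a pushout of a $\mu_N$-gerbe.

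I expect the main difficulty to lie not in any single step but in keeping the statements scheme-theoretic rather than pointwise — above all the passage from ``each $g\in\Aut^0_X$ lifts'' to ``$\pi$ is faithfully flat over $\Aut^0_X$'', which in positive characteristic must contend with possible non-reducedness. The device that makes this routine is to work throughout via the image subgroup: knowing from (1) that $\pi$ is a homomorphism of group algebraic spaces locally of finite type, it is automatically faithfully flat onto its closed image, so everything is reduced to the functorial inclusion $\Aut^0_X\subseteq\Aut_{X,\alpha}$, which the base-change argument above establishes on all (Artinian, hence all) test schemes. The genuinely essential point — and the reason (3) requires a hypothesis at all — is that the assumption places the gerbe class in a \emph{finite} \'etale cohomology group, on which a connected algebraic group must act trivially; for a wild $p$-primary class in characteristic $p$ the natural receptacle (flat, or de Rham--Witt, cohomology) is infinite, a connected group can then move $\alpha$, and the conclusion of (3) genuinely breaks down.
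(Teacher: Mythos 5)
Your proposal only addresses part (3): you explicitly take (1) and (2) as given, but they are part of the statement and are where much of the content lies. The paper proves (1) by identifying $\cAut_{\cX}$, viewed over $\Aut_X$, with the stack of sections $\cSec(\cY/X\times \Aut_X)$ of the ``difference'' $\mathbf{G}_m$-gerbe $\cY$ of $\cX_{\Aut_X}$ and $\alpha^*\cX_{\Aut_X}$ (with $\alpha$ the universal automorphism), then identifying that stack of sections with a stack of $1$-twisted invertible sheaves and importing algebraicity from the moduli theory of twisted sheaves after replacing the $\mathbf{G}_m$-gerbe by a $\mu_N$-gerbe; and it proves (2) by showing $\cX\times_X\cX\simeq \cX\times_X B\mathbf{G}_{m,X}$, so that the fiber of $\cAut_{\cX}\rightarrow \Aut_X$ over a fixed automorphism is the groupoid of $0$-twisted line bundles, i.e.\ $\cPic(X_T)$, whence the kernel $\Pic_X$ and the exact sequence. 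None of this appears in your write-up, so as a proof of the theorem as stated it is incomplete.

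Your argument for (3) itself is essentially correct and rests on the same mechanism as the paper's: with $N$ invertible, the class of the $\mu_N$-gerbe varies in a finite locally constant étale sheaf ($\rR^2\pi_*\mu_N$, by smooth and proper base change) over the connected base $\Aut^0_X$, so agreement at the identity forces agreement everywhere. The paper runs this once with the universal automorphism over the (possibly non-reduced) $\Aut^0_X$; you instead test at Artinian points and invoke the fact that a homomorphism of group schemes locally of finite type over a field is faithfully flat onto its closed image, which is a legitimate alternative way to handle non-reducedness and to pass from pointwise lifting to surjectivity on identity components. Two wrinkles to fix: (i) you should first reduce to $k$ algebraically closed (as the paper does), or test on strictly henselian Artinian points, since otherwise $H^{i}(\operatorname{Spec}A,\,\cdot\,)$ for $i>0$ is Galois cohomology and need not vanish, so the ``remaining Künneth components'' are not killed by restriction; (ii) the claim that the image of $\pi$ is ``the closed subgroup stabilizing $\alpha$'' is neither proved nor needed — the condition $g^*\alpha=\alpha$ in $H^2(X_S,\mathbf{G}_m)$ is not fppf-local on $S$, so it is not clear it defines a closed subscheme, whereas your argument only requires the scheme-theoretic image of $\pi$ (automatically a closed subgroup, the quotient of $\Aut_{\cX}$ by its kernel $\Pic_X$ from part (2)) together with the inclusion of $\Aut^0_X$ into it, which your Artinian-point computation plus the Krull intersection argument already supplies.
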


\begin{example} In the case when $\cX $ is the trivial gerbe $B\mathbf{G}_{m, X}$ the group $\Gamma _{\cX }$ is trivial and the sequence is split. Indeed if $(a, b)\colon X\times B\mathbf{G}_m\rightarrow X\times B\mathbf{G}_m$ is an automorphism of $\mathbf{G}_m$-gerbes then $a$ necessarily factors through an automorphism $\bar a\colon X\rightarrow X$ and $b$ is given by a line bundle $\mc L$ on $X\times B\mathbf{G}_m$ on which the stabilizer groups act through the standard character. From this it follows that $b$ is specified by a line bundle $\mc M$ on $X$ by the formula $\mc L = \mc M\boxtimes \chi $.  It follows that  \begin{equation}\label{E:groupstackiso}
\cAut _{B\mathbf{G}_{m, X}}\simeq \Aut _X\times \mls Pic _X
\end{equation}
as a stack, where $\mls Pic _X$ is the Picard stack of line bundles on $X$. In particular, we have $\Aut _{B\mathbf{G}_{m, X}}\simeq \text{Pic}_X\times \Aut _X$.  The map $\Aut _X\times \mls Pic _X\rightarrow \cAut _{B\mathbf{G}_{m, X}}$ can also be described as follows.  Let $T$ be a $k$-scheme and let $T\rightarrow X\times B\mathbf{G}_m$ be a map corresponding to a pair $(x, \mls L)$ consisting of a $T$-point $x$ of $X$ and a line bundle $\mls L$ on $T$.  Then the automorphism of $B\mathbf{G}_{m, X_T}$ induced by a pair $(\alpha , \mls M)\in \Aut _X(T)\times \mls Pic _X(T)$ sends $(x, \mls L)$ to $(\alpha (x), \mls L\otimes x^*\mls M)$.

Note, however, that \eqref{E:groupstackiso} is not an isomorphism of group stacks.  Given $(\alpha , \mls L), (\alpha ', \mls L')\in \Aut _X\times \mls Pic _X$ the composition of automorphims $(\alpha , \mls M)\circ (\alpha ', \mls M')$ is equal to $(\alpha \circ \alpha ', \mls M'\otimes\alpha ^{\prime *}\mls M)$.  So as a group stack this should be viewed as a semi-direct product $\Aut _X\ltimes \mls Pic_X$.  In particular, we have $\Aut _{\mathbf{G}_{m, X}}\simeq \Aut _X\ltimes \Pic _X$.  If $\Pic ^0_X$ is an abelian variety, then $\Aut ^0_X$ acts trivially on $\Pic ^0_X$ and it follows that the connected component of the identity $\Aut ^0_{B\mathbf{G}_{m, X}}$ is isomorphic as a group  to $\Aut ^0_X\times \Pic ^0_X$.
\end{example}

Our generalization of Rouquier's theorem is phrased in terms of derived invariance of the group $\Aut ^0_{\cX }$.   To phrase our main result in this regard, we first introduce another group stack $\cR_{\cX }$.

Let $A$ be an abelian group and let $D(A):= \text{Hom}(A, \mathbf{G}_m)$ denote the associated diagonalizable group scheme.  The main examples for us are $A = \mathbf{Z}, \mathbf{Z}/N$, $\mathbf{Z}^r$, in which case $D(A) = \mathbf{G}_m, \mu _N, \mathbf{G}_m^r$.  If $\cX \rightarrow X$ is a $D(A)$-gerbe then every object $F\in D(\cX )$ has a canonical decomposition $F = \oplus _{a\in A}F_a$ and morphisms in $D(\cX )$ respects this decomposition.  This is discussed in \cite[\S 2.1]{MR2309155}.  We let $D(\cX )^{(a)}\subset D(\cX )$ denote the subcategory of objects for which $F = F_a$.  An object $K\in D(\cX )$ lies in $D(\cX )^{(a)}$ if and only if for every geometric point $\bar x\rightarrow \cX $ the action of $D(A)$ on the cohomology groups of the fiber $K(\bar x)$ is through the character $a$.

Following existing literature, if $A = \mathbf{Z}$ so that $\cX $ is a $\mathbf{G}_m$-gerbe with associated Brauer class $\alpha \in H^2(X, \mathbf{G}_m)$ then we write $D(X, \alpha )$ for the triangulated category $D(\cX )^{(1)}$.

\begin{defn} Let $\cX /X$ and $\cY/Y$ be two $\mathbf{G}_m$-gerbes over smooth projective varieties over a field $k$ with associated Brauer classes $\alpha \in H^2(X, \mathbf{G}_m)$ and $\beta \in H^2(Y, \mathbf{G}_m)$.  A \emph{Fourier-Mukai functor} $D(X, \alpha )\rightarrow D(Y, \beta )$ is an object $K\in D(\cX \times \cY)^{(-1,1)}$ such that the induced functor
$$
\Phi ^K\colon D(X, \alpha )\rightarrow D(Y, \beta ), \ \ F\mapsto \rR q_*(\rL p^*F\lotimes K)
$$
is an equivalence, where $p\colon \cX \times \cY \rightarrow \cX $ and $q\colon \cX \times \cY \rightarrow \cY$ are the projections.
\end{defn}

\begin{rem}
    Note that $\cX \times \cY$ is a $\mathbf{G}_m^2$-gerbe over $X\times Y$.  If $F\in D(X, \alpha )$ then $\rL p^*F\lotimes K\in D(\cX \times \cY)^{(0,1)}$, and therefore descends (via the derived pushforward functor) to an object of $D(X\times \cY)^{(1)}$.  Since $X/k$ is proper it follows that $\rR q_*(\rL p^*F\lotimes K)$ is a $1$-twisted bounded complex on $\cY$ with coherent cohomology sheaves.  In particular, the functor $\Phi ^K$ is well-defined.
\end{rem}

\begin{example} Let $\sigma \colon \cX \rightarrow \cX $ be an automorphism of a $\mathbf{G}_m$-gerbe, and let $\Gamma _\sigma := (\sigma , \text{id})\colon \cX \rightarrow \cX \times \cX $ be its graph (note that this is not an immersion; in fact, has some fibers of positive dimension).  We can then consider the $(-1,1)$-twisted part $(\Gamma _{\sigma *}\mls O_{\cX })_{(-1,1)}\in D(\cX \times \cY)^{(-1,1)}$.  For $F\in D(X, \alpha )$ we have
$$
\sigma  ^*(F) = \rR q_*(\Gamma _{\sigma *}\mls O_{\cX }\lotimes \rL p^*(F)) = \rR q_*((\Gamma _{\sigma *}\mls O_{\cX })^{(-1,1)}\lotimes \rL p^*F)
$$
and therefore $\Phi ^{(\Gamma _{\sigma *}\mls O_{\cX })_{(-1,1)}} = \sigma ^*$.
\end{example}

\begin{pg}\label{P:Rdef}
Let $\cR _{\cX }$ denote the fibered category which to any $k$-scheme $S$ associates the groupoid of perfect complexes $P\in D((\cX \times \cX )_S)$ such that for all geometric points $\bar s\rightarrow S$ the fiber $P_{\bar s}\in D((\cX \times \cX)_{\kappa (\bar s)})$ is of the form $(\Gamma _{\sigma *}\mls O_{\cX _{\kappa (\bar s)}})_{(-1,1)}$ for an automorphism $\sigma \colon \cX _{\kappa (\bar s)}\rightarrow \cX _{\kappa (\bar s)}$.
\end{pg}

\begin{thm}\label{T:theorem2} (1) The fibered category $\cR _{\cX }$ is an algebraic stack locally of finite type over $k$, which is a $\mathbf{G}_m$-gerbe over a group algebraic space $\mathbf{R}_{\cX }$.

(2) The natural map 
$$
\cAut _{\cX }\rightarrow \cR _{\cX }, \ \ \sigma \mapsto (\Gamma _{\sigma *}\mls O_{\cX })_{(-1,1)}
$$
is an isomorphism of stacks.
\end{thm}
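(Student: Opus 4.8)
\textbf{Overall plan.} I would prove (2) first and deduce (1) from it: once the natural functor
$\Theta\colon \cAut_{\cX}\to \cR_{\cX}$, $\sigma\mapsto (\Gamma_{\sigma *}\mls O_{\cX})_{(-1,1)}$,
is shown to be an equivalence of stacks, part (1) is immediate from Theorem~\ref{T:theorem1}(1). (Alternatively one can get the algebraicity in (1) directly from Lieblich's theorem on the algebraicity of the stack of perfect complexes on $(\cX\times\cX)/k$, realizing $\cR_{\cX}$ as the substack of complexes which are flat sheaves in degree $0$, lie in the $(-1,1)$-part, and whose scheme-theoretic support maps isomorphically to $\cX$ under $p_1$ --- the last being an open condition since the isomorphism locus is open; the $\mathbf{G}_m$-gerbe structure then comes from scalar automorphisms, using $\End(Q_{\bar s})=H^0(\cX_{\bar s},\mls O)=\kappa(\bar s)$ because the fibre is the structure sheaf of the connected substack $\Gamma_{\sigma_{\bar s}}(\cX_{\bar s})\simeq \cX_{\bar s}$.) Either way the real content is the equivalence in (2), and within that, essential surjectivity.

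\textbf{Well-definedness and a preliminary reduction.} Under the decomposition recalled in \cite[\S2.1]{MR2309155}, $D(\cX\times\cX)^{(-1,1)}\simeq D(X\times X,\beta)$ with $\beta = p_2^*\alpha - p_1^*\alpha$, and $(\Gamma_{\sigma *}\mls O_{\cX})_{(-1,1)}$ corresponds to $i_{*}L$, where $i\colon \Gamma_{\bar\sigma}\hookrightarrow X\times X$ is the graph of the induced automorphism $\bar\sigma$ of $X$ --- a regular closed immersion since $X$ is smooth --- and $L$ is a (twisted) line bundle on $\Gamma_{\bar\sigma}$. Hence $(\Gamma_{\sigma *}\mls O_{\cX})_{(-1,1)}$ is perfect with cohomology concentrated in degree $0$, and the fibrewise condition defining $\cR_{\cX}$ holds by construction, so $\Theta$ is a well-defined morphism of stacks. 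For any $P\in\cR_{\cX}(S)$ the fibres $P_{\bar s}$ are perfect and concentrated in degree $0$, so by the standard base-change criterion $P$ is quasi-isomorphic to an $S$-flat coherent sheaf $Q$ in degree $0$ lying in the $(-1,1)$-part; it suffices to reconstruct $\sigma$ from $Q$.

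\textbf{Full faithfulness.} For $\sigma,\sigma'\in\cAut_{\cX}(S)$ the two sets $\mathrm{Isom}(\sigma,\sigma')$ and $\mathrm{Isom}(\Theta(\sigma),\Theta(\sigma'))$ are each empty or a torsor under $\mathbf{G}_m(S)=\Gamma(X_S,\mls O^*)$ --- on the left, $2$-automorphisms of a morphism of gerbes inducing the identity on the band; on the right, scalar automorphisms of $i_*L$, using $\End(i_*L)\cong\Gamma(\cX_S,\mls O)$ --- and $\Theta$ matches the two identifications. An isomorphism $\Theta(\sigma)\xrightarrow{\sim}\Theta(\sigma')$ forces the scheme-theoretic supports, hence $\bar\sigma$ and $\bar\sigma'$, to agree, and then identifies the two twisted line bundles, i.e.\ gives a $2$-morphism $\sigma\Rightarrow\sigma'$. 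Thus $\Theta$ is fully faithful, and it remains to prove essential surjectivity.

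\textbf{Essential surjectivity (the crux).} Let $\bar Q\in D((X\times X)_S,\beta_S)$ be the $S$-flat $\beta_S$-twisted sheaf corresponding to $Q$; for each geometric point $\bar s$, $\bar Q_{\bar s}=i_{\bar s *}L_{\bar s}$ with $i_{\bar s}\colon \Gamma_{\bar\sigma_{\bar s}}\hookrightarrow X_{\bar s}\times X_{\bar s}$ the graph of an automorphism, $L_{\bar s}$ a line bundle, and $p_1$ restricting to an isomorphism $\Gamma_{\bar\sigma_{\bar s}}\xrightarrow{\sim}X_{\bar s}$. The geometric heart of the argument is to show that the scheme-theoretic support $Z:=V(\mathrm{Ann}\,\bar Q)\subset(X\times X)_S$ is flat over $S$ with fibres $Z_{\bar s}=\Gamma_{\bar\sigma_{\bar s}}$, and that $p_1|_Z\colon Z\to X_S$ is an isomorphism. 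Granting this, $Z$ is the graph of an $S$-morphism $\bar\sigma_S:=(p_2|_Z)\circ(p_1|_Z)^{-1}\colon X_S\to X_S$, which is an automorphism of $X_S/S$ since it is one on geometric fibres and $X_S$ is proper and flat over $S$ (equivalently, its inverse is built the same way from $p_2|_Z$). Then $\bar Q$ is the pushforward along the graph of $\bar\sigma_S$ of an $S$-flat sheaf that is fibrewise a line bundle, hence a twisted line bundle $L$ on $\Gamma_{\bar\sigma_S}\simeq X_S$ carrying the twist $(p_1|_Z)^*(\bar\sigma_S^*\alpha-\alpha)$; the existence of such an untwisted-rank-one $L$ forces $\bar\sigma_S^*\alpha=\alpha$, so $\bar\sigma_S$ lifts to an automorphism $\sigma$ of the $\mathbf{G}_m$-gerbe $\cX_S$, with $L$ pinning down the lift, and $\sigma$ induces the identity on the band automatically since $Q$ lies in the $(-1,1)$-part. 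By construction $\Theta(\sigma)\simeq P$, and uniqueness of $\sigma$ comes from full faithfulness.

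\textbf{Main obstacle.} The delicate step is the geometric one just highlighted: upgrading ``$\bar Q_{\bar s}$ is a line bundle on a graph with $p_1$ an isomorphism'' from fibres to families. This is a flatness argument --- one uses that $\bar Q$ is $S$-flat and fibrewise locally free over $X_{\bar s}$ (via $p_1$) to conclude that $\bar Q$ is flat over $X_S$ by the fibrewise flatness criterion, that $p_1|_Z$ is proper and quasi-finite hence finite, and that a finite morphism which is an isomorphism on all geometric fibres over $S$ is an isomorphism --- together with the observation that forming the scheme-theoretic support commutes with base change here, which relies on the purity of $\bar Q_{\bar s}$ (a line bundle on the reduced equidimensional $Z_{\bar s}$). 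The twisted-sheaf bookkeeping of the last paragraph --- tracking $\beta$ under restriction to the graph and under $D(\cX\times\cX)^{(-1,1)}\simeq D(X\times X,\beta)$ --- is routine but needs care.
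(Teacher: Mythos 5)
Your overall architecture matches the paper's: (1) is deduced from (2) together with Theorem \ref{T:theorem1}, and your full-faithfulness/recovery step (the scheme-theoretic support of the kernel is a $\mathbf{G}_m$-gerbe over the graph of $\bar\sigma$, and the $1$-twisted line bundle on it is exactly a trivialization of $\cX^{-1}\wedge\bar\sigma^*\cX$, i.e.\ the lift $\sigma$) is the same as the paper's concluding argument. Where you genuinely diverge is the crux, essential surjectivity over an arbitrary base $S$. The paper proves a stronger statement (Proposition \ref{P:keyprop}): the locus where the fibre satisfies the Rouquier condition is \emph{open}, and over it the kernel comes from an automorphism; the proof goes through derived Nakayama, a limit argument to the strict henselization, Grothendieck existence for twisted sheaves, and an artinian deformation step in which the graph over $A'$ is produced from the surjection $\mls O_{\cX_{A'}^{(2)}}\to \mathscr{H}\!{\it om}(K,K)$. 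That openness statement is reused later in the proof of Theorem \ref{T:theorem3}, so the paper's route does double duty. Your proposal instead works directly in the family: flatness of $\bar Q$ over $X_S$ via the fibrewise criterion, finiteness of the support over $X_S$, and then reconstruction of the graph. If completed, this is more elementary (no reduction to the artinian case, no Grothendieck existence), but it only yields Theorem \ref{T:theorem2}, not the open-locus statement.

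There is, however, a soft spot in your sketch of that step. The auxiliary claim ``a finite morphism which is an isomorphism on all geometric fibres over $S$ is an isomorphism'' is false without flatness over $S$ (an infinitesimal thickening of a closed immersion, e.g.\ $V(\epsilon)\subset X_{k[\epsilon]}$ over $S=\mathrm{Spec}\,k[\epsilon]$, is finite and a fibrewise isomorphism but not an isomorphism), and flatness of $Z=V(\mathrm{Ann}\,\bar Q)$ over $S$ --- together with compatibility of its formation with base change --- is precisely what has to be proved; ``purity of $\bar Q_{\bar s}$'' does not by itself give this in families. The repair stays within your framework and is the relative version of the paper's own $\mathscr{H}\!{\it om}$ computation: since $\bar Q$ is flat over $X_S$ via $p_1$ and its support is finite over $X_S$, the pushforward $L:=p_{1*}\bar Q$ is a twisted line bundle on $X_S$ whose formation commutes with base change; the action of $\mls O_{(X\times X)_S}$ on $\bar Q$ then yields, after pushforward, an $\mls O_{X_S}$-algebra homomorphism into $\mathcal{E}nd_{X_S}(L)=\mls O_{X_S}$, i.e.\ a section of $\mathrm{pr}_1$ whose second component is the desired $S$-morphism $\bar\sigma_S$ (an automorphism since it is so on fibres), and whose graph ideal annihilates $\bar Q$; this gives both the flatness of the support and the identification $\bar Q\simeq$ (twisted line bundle on the graph) in one stroke, after which your concluding lifting argument applies verbatim. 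With that substitution your route is correct and a legitimate alternative to the paper's deformation-theoretic proof.
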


\begin{rem} In general it is not clear that the action of $\cR ^0_{\cX }$ on $D(X, \alpha )$ is faithful.  However, there is a finite flat subgroup scheme $\Gamma \subset \textrm{Pic}^0_X$ such that the action of $\mathbf{R}^0_{\cX }/\Gamma $ on $D(X, \alpha )$ is faithful in an appropriate sense (see \ref{P:faithful}).
\end{rem}

Finally we establish the derived invariance of the group $\cR ^0_{\cX }$.
Let $\mc Y/Y$ be another $\mathbf{G}_m$-gerbe over a smooth projective variety $Y/k$ with associated Brauer class $\beta \in H^2(\mc Y, \mathbf{G}_m)$.  Let $K\in D(\cX \times \cY )^{(-1,1)}$ be a complex inducing an equivalence $D(X, \alpha )\rightarrow D(Y, \beta )$.

\begin{thm}\label{T:theorem3} The Fourier-Mukai equivalence $K$ induces an isomorphism $\cR ^0_{\cX }\rightarrow \cR ^0_{\cY }$.
\end{thm}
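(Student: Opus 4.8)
The plan is to realise the desired isomorphism by \emph{conjugation with the kernel $K$}, applied to the tautological family of kernels over $\cR^0_{\cX}$. Since $\Phi^K$ is an equivalence, a twisted version of the standard Fourier--Mukai formalism provides a quasi-inverse of Fourier--Mukai type, induced by a kernel $K^{-1}\in D(\cY\times\cX)^{(-1,1)}$ whose convolutions with $K$ on either side are the identity kernels of $\cX$ and of $\cY$. For an $S$-family $P\in D((\cX\times\cX)_S)$ I would set
$$
c_K(P):=K_S^{-1}*P*K_S\in D((\cY\times\cY)_S),
$$
where the convolution $*$ is oriented so that on each geometric fiber $\Phi^{c_K(P)_{\bar s}}\simeq \Phi^{K_{\bar s}}\circ\Phi^{P_{\bar s}}\circ\Phi^{K_{\bar s}^{-1}}$; a twist computation as in the Remark following the definition of a Fourier--Mukai functor shows $c_K(P)$ again lies in the $(-1,1)$-twisted part. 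Formation of $c_K(P)$ commutes with base change on $S$ and preserves perfectness, and by associativity of convolution together with the identities above it defines a homomorphism of group stacks (under convolution), with inverse $c_{K^{-1}}$, from the fibered category $\cR^{\mathrm{big}}_{\cX}$ of perfect $(-1,1)$-twisted kernels on $\cX\times\cX$ inducing fiberwise Fourier--Mukai autoequivalences of $D(X,\alpha)$ to the analogous $\cR^{\mathrm{big}}_{\cY}$. Note that $\cR_{\cX}$ (resp.\ $\cR_{\cY}$), as defined in \ref{P:Rdef}, is a sub-group-stack of $\cR^{\mathrm{big}}_{\cX}$ (resp.\ $\cR^{\mathrm{big}}_{\cY}$), since the convolution of two graph kernels is the graph kernel of the composite automorphism.

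Next I would reduce to a statement near the identity. It suffices to show $c_K(\cR^0_{\cX})\subseteq\cR^0_{\cY}$: applying the same construction with $K^{-1}$ in place of $K$ then yields the inverse morphism, so that $\cR^0_{\cX}\to\cR^0_{\cY}$ is an isomorphism (the remaining $\mathbf{G}_m$-gerbe bookkeeping amounts to the overall twist of a kernel by a line bundle pulled back from the base, which $c_K$ respects, so the induced map of coarse group spaces $\mathbf{R}^0_{\cX}\to\mathbf{R}^0_{\cY}$ promotes to a map of $\mathbf{G}_m$-gerbes). Now $\mathbf{R}^0_{\cX}$ is a connected group algebraic space locally of finite type over $k$, hence is generated by the image of any open substack $\cU\subseteq\cR^0_{\cX}$ containing the identity; and $c_K$ carries the identity kernel of $\cX$ to that of $\cY$. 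So if $\cU$ can be chosen with $c_K(\cU)\subseteq\cR_{\cY}$, then $c_K(\cU)$ is connected and meets the identity, hence lies in $\cR^0_{\cY}$; since $c_K$ is a group homomorphism and $\cR^0_{\cY}$ is a subgroup stack, this forces $c_K(\cR^0_{\cX})\subseteq\cR^0_{\cY}$, as wanted.

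Finally, to produce such a $\cU$ — and this is where the real work lies — I would study $Q:=c_K(P^{\mathrm{univ}})$, the image of the universal kernel over $\cR^0_{\cX}$: a perfect complex on $(\cY\times\cY)_{\cR^0_{\cX}}$ whose fiber over the identity point is the $(-1,1)$-twisted structure sheaf of the diagonal $\Delta_{\cY}$. Because $\cY\times\cY$ is proper over $k$, upper semicontinuity of supports together with the openness of the locus over which a perfect complex is a flat family of coherent sheaves yields an open $\cU\ni\id$ over which $Q$ is a $\cU$-flat sheaf supported in an arbitrarily small neighbourhood of $\Delta_{\cY}$; shrinking $\cU$ further, the first projection may be assumed finite on the support of each fiber $Q_g$. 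Combining this with the fact that each $\Phi^{Q_g}$ is an equivalence, a family version of the geometricity criterion for Fourier--Mukai kernels — a twisted analogue of the Bridgeland--Orlov argument, already used in this paper to identify $\cAut_{\cX}$ with $\cR_{\cX}$ in Theorem~\ref{T:theorem2} — identifies $Q_g$ with $(\Gamma_{\sigma_g *}\mathscr{O}_{\cY})_{(-1,1)}$ for a unique automorphism $\sigma_g$ of $\cY_{\kappa(g)}$ inducing the identity on the $\mathbf{G}_m$-stabilizers, i.e.\ $c_K(\cU)\subseteq\cR_{\cY}$. The main obstacle is precisely this last step: the support estimate is soft, but passing from ``equivalence whose kernel is supported near the diagonal'' to ``structure sheaf of the graph of a gerbe automorphism carrying the correct $(-1,1)$-twist'', uniformly in families, is the technical heart, and one must verify that the extracted automorphism is compatible with the $\mathbf{G}_m$-gerbe structure — which is forced by the normalisation of $Q$ at the identity point together with the connectedness of $\cR^0_{\cX}$, and which leans on the infrastructure assembled for Theorems~\ref{T:theorem1}--\ref{T:theorem2}.
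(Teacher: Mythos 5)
Your outer skeleton agrees with the paper's: conjugation of kernels $P\mapsto K*P*A$ (the paper's $\Psi$, with $A$ the adjoint/inverse kernel), compatibility with convolution via $A*K\simeq (\Delta_{\cX*}\mls O_{\cX})^{(-1,1)}$, the observation that the identity kernel goes to the identity kernel, and the group-theoretic step that a nonempty open subset of the connected $\mathbf{R}^0_{\cX}$ closed under convolution must be all of it. Up to that point your reduction is sound.

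The genuine gap is in the step you yourself flag as ``the real work'': producing the open $\cU\ni\id$ with $c_K(\cU)\subseteq\cR_{\cY}$. You propose to get it from a support/flatness estimate plus ``a family version of the geometricity criterion for Fourier--Mukai kernels --- a twisted analogue of the Bridgeland--Orlov argument, already used in this paper in Theorem~\ref{T:theorem2}.'' No such criterion is proved in the paper, and it is not what Theorem~\ref{T:theorem2} or Proposition~\ref{P:keyprop} say: those results take as \emph{hypothesis} that a geometric fiber already satisfies the Rouquier condition (i.e.\ is literally of the form $(\Gamma_{\sigma*}\mls O_{\cX})_{(-1,1)}$) and conclude that this locus is open and carries a unique family automorphism; they give no criterion for recognizing that an arbitrary equivalence kernel which happens to be a sheaf supported near the diagonal is of graph form. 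That recognition statement is essentially the hard content of a Rouquier/Bridgeland--Orlov-type theorem in the twisted setting, and your soft semicontinuity argument does not supply it (nor does semicontinuity by itself give ``supported in an arbitrarily small neighbourhood of $\Delta_{\cY}$'' uniformly over $\cU$). The paper avoids this entirely: it applies Proposition~\ref{P:keyprop} directly to $\Psi$ of the universal kernel over $\mathbf{R}^0_{\cX}$, noting via Lemma~\ref{L:7.6} that the fiber over the identity is $(\Delta_{\cY*}\mls O_{\cY})^{(-1,1)}$, which satisfies the Rouquier condition by definition; openness of that condition (which is exactly what \ref{P:keyprop} provides) then yields the nonempty open $\cU$, and the convolution argument you describe finishes the proof. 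If you replace your geometricity step by this direct appeal to \ref{P:keyprop}, your argument closes and coincides with the paper's.
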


\begin{rem} The precise manner in which $K$ defines the isomorphism is explained in section \ref{S:section7}.  Intuitively, the map on $\cR ^0$ should be viewed as sending an autoequivalence $\rho $ of $D(X, \alpha )$ to the autoequivalence $\Phi ^K\circ \rho \circ (\Phi ^{K})^{-1}$ of $D(Y, \beta )$.
\end{rem}

In the last section we also discuss a description of gerbes and twisted derived categories over abelian varieties using the autormorphism groups of gerbes and descent, which we expect to use in future work on twisted derived categories of abelian varieties.

\subsection{Acknowledgements}
The author was partially supported by NSF FRG grant DMS-2151946 and a grant from the Simons Foundation.

\section{Twisted sheaves}\label{S:section2}

Before beginning the proofs of the above theorems, 
we review some facts about twisted sheaves and gerbes.  The results of this section are well-known to experts; in particular, a number of them can be extracted from \cite{MR2309155}.

\begin{lem}\label{L:1.1} Let $X$ be a scheme and let $\mc X_N$ be a $\mu _N$-gerbe for some $N>0$ with associated $\mathbf{G}_m$-gerbe $\mc X$.  Let $i\colon \cX _N\rightarrow \cX$ be the natural map.  Then the pullback functor $i^*\colon D(\cX )^{(1)}\rightarrow D(\cX _N)^{(1)}$ on categories of $1$-twisted sheaves is an equivalence with inverse the functor $i_*^{(1)}$ sending $\mc F\in D(\cX _N)^{(1)}$ to the $1$-twisted part of $i_*\mc F$.
\end{lem}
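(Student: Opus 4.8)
The plan is to identify both sides with fppf-locally trivial categories of twisted sheaves, to set up $i_*^{(1)}$ as a right adjoint of $i^*$, and to check that the unit and counit of this adjunction are isomorphisms after base change to a cover of $X$ that trivializes the gerbe. First I would record the structure we use. By \cite[\S 2.1]{MR2309155}, every quasi-coherent complex on the $\mathbf{G}_m$-gerbe $\mc X$ (resp.\ on the $\mu_N$-gerbe $\mc X_N$) has a canonical decomposition into weight eigencomplexes indexed by $\mathbf{Z}$ (resp.\ by $\mathbf{Z}/N$), and every morphism respects this decomposition. The morphism $i$, being the pushout of $\mc X_N$ along $\mu_N\hookrightarrow\mathbf{G}_m$, is representable, affine, flat and of finite type: fppf-locally on $X$ it is the base change of the standard map $B\mu_N\to B\mathbf{G}_m$, whose fibre is the scheme $\mathbf{G}_m/\mu_N$. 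Hence $i^*$ is exact, preserves bounded complexes with coherent cohomology, and carries $D(\mc X)^{(n)}$ into $D(\mc X_N)^{(n\bmod N)}$, so it restricts to $i^*\colon D(\mc X)^{(1)}\to D(\mc X_N)^{(1)}$. Since $i$ is affine, the pushforward $i_*$ on quasi-coherent complexes is $t$-exact; so for $\mc F\in D(\mc X_N)^{(1)}$ the complex $i_*\mc F\in D_{\qcoh}(\mc X)$ is bounded, its weight-$n$ part vanishes for $n\not\equiv 1\bmod N$, and we put $i_*^{(1)}\mc F:=(i_*\mc F)^{(1)}$. Because morphisms preserve weights, $i_*^{(1)}$ is right adjoint to $i^*$ on the weight-$1$ subcategories: for $F\in D(\mc X)^{(1)}$ and $G\in D(\mc X_N)^{(1)}$,
$$
\romanHom_{D(\mc X_N)}(i^*F,G)\;\simeq\;\romanHom_{D_{\qcoh}(\mc X)}(F,i_*G)\;\simeq\;\romanHom_{D(\mc X)}(F,(i_*G)^{(1)}),
$$
the last step because $F$ is concentrated in weight $1$. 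It then remains to show the unit $\id\to i_*^{(1)}i^*$ and the counit $i^*i_*^{(1)}\to\id$ are isomorphisms; this will also show that $i_*^{(1)}\mc F$ lies in $D(\mc X)^{(1)}$, since fppf-locally it will be identified with $\mc F$.

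For that I would reduce to the case of trivial gerbes. Pick a smooth surjection $U\to\mc X_N$ from a scheme; then $U\to X$ is faithfully flat, $\mc X_N\times_X U$ has a section and hence $\mc X_N\times_X U\simeq B\mu_{N,U}$, and compatibly $\mc X\times_X U\simeq B\mathbf{G}_{m,U}$ with $i\times_X U$ the standard projection $B\mu_{N,U}\to B\mathbf{G}_{m,U}$. As $i$ is affine and flat, flat base change shows that $i^*$, $i_*$, the weight decompositions and therefore the unit and counit are all compatible with restriction along $U\to X$; and restriction along the faithfully flat $U\to X$ is conservative on $D_{\qcoh}$. So one is reduced to $\mc X_N=B\mu_{N,U}$, $\mc X=B\mathbf{G}_{m,U}$. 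There a $1$-twisted quasi-coherent complex on $B\mathbf{G}_{m,U}$ (resp.\ on $B\mu_{N,U}$) is precisely a quasi-coherent complex on $U$ carrying the action of $\mathbf{G}_m$ (resp.\ $\mu_N$) through its tautological character — here using that $\mu_N$ is diagonalizable, so that its representations are $\mathbf{Z}/N$-graded even when $N$ is not invertible in $k$ — and a morphism between two such is an arbitrary $\mc O_U$-linear morphism; so $D(B\mathbf{G}_{m,U})^{(1)}$ and $D(B\mu_{N,U})^{(1)}$ are both canonically identified with $D(U)$. Under these identifications $i^*$ is the identity, and $i_*$ sends a graded object to the $\mathbf{Z}$-graded object repeating the given module in every weight in the appropriate residue class, so $i_*^{(1)}$ is also the identity; hence the unit and counit are identities, which finishes the proof.

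The step requiring the most care is the handling of $i_*$. Because $i$ is not proper, $i_*$ does not preserve $D(\mc X)$, so the adjunction and the base-change argument must be run in $D_{\qcoh}(\mc X)$ rather than in $D(\mc X)$; one needs that $i_*$ commutes with the flat base change $\mc X\times_X U\to\mc X$ (flat base change for the affine morphism $i$) and that the formation of weight-$1$ parts is compatible with pullback, and it is only the explicit computation over the trivial gerbe that then guarantees that the weight-$1$ part $i_*^{(1)}\mc F$ is again bounded with coherent cohomology and that the comparison maps with $\mc F$ are isomorphisms.
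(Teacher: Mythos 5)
Your proposal is correct and follows essentially the same route as the paper: the paper also constructs the natural maps $\mathrm{id}\to i_*^{(1)}i^*$ and $i^*i_*^{(1)}\to\mathrm{id}$ (the unit and counit of the adjunction you spell out), checks them fppf-locally on $X$, and reduces to the case $B\mu_{N,X}\to B\mathbf{G}_{m,X}$, where the graded-module computation you give is exactly what the paper calls immediate. Your write-up simply makes explicit the details (affineness and flatness of $i$, exactness of $i_*$, compatibility with flat base change and weight decompositions) that the paper leaves to the reader.
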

\begin{proof}
There are natural maps $\text{id}\rightarrow i_*^{(1)}i^*$ and $i^*i_*^{(1)}\rightarrow \text{id}$ and to verify that they are equivalences we may work fppf locally on $X$.  It therefore suffices to consider the case when $\cX _N = B\mu _{N, X}$ and $i$ is the natural map $B\mu _{N, X}\rightarrow B\mathbf{G}_{m, X}$.  In this case the result is immediate.
\end{proof}

\begin{lem}\label{L:1} Let $f\colon Y\rightarrow S$ be a proper flat morphism of algebraic spaces and let $\cY \rightarrow Y$ be a $\mathbf{G}_m$-gerbe, which is the pushout of a $\mu _N$-gerbe for some $N>0$.  Assume that the map $\mls O_S\rightarrow f_*\mls O_Y$ is an isomorphism, and that the same holds after arbitrary base change $S'\rightarrow S$.  Let $\cSec (\cY/Y)$ be the stack over $S$ which to any $T\rightarrow S$ associates the groupoid of sections $s\colon Y_T\rightarrow \cY _T$.  Then $\cSec (\cY /Y)$ is an algebraic stack which is a $\mathbf{G}_m$-gerbe over an algebraic space $\Sec (\cY /Y)$.
\end{lem}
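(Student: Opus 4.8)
The plan is to identify $\cSec(\cY /Y)$ with a moduli stack of $1$-twisted invertible sheaves, to realize that stack --- using the hypothesis that $\cY$ is a pushout of a $\mu _N$-gerbe $\cY _N\to Y$ --- as an open substack of the stack of coherent sheaves on a \emph{proper} algebraic stack over $S$, and finally to observe that it has inertia $\mathbf{G}_m$ and rigidify. For the first point, recall the standard dictionary: for an $S$-scheme $T$, a section $s\colon Y_T\to \cY _T$ is a trivialization of the $\mathbf{G}_m$-gerbe $\cY _T$, equivalently a $1$-twisted invertible sheaf $L_s$ on $\cY _T$ (pull back the standard weight-$1$ line bundle along $\cY _T\simeq B\mathbf{G}_{m, Y_T}$; conversely a $1$-twisted invertible sheaf $L$ on $\cY _T$ gives the $\mathbf{G}_m$-torsor $\und{\mathrm{Isom}}(\mls O, L)$ over $\cY _T$, on which the inertia acts through the standard character, and this torsor is representable over $Y_T$ and maps isomorphically to it, recovering a section). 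These constructions are quasi-inverse and compatible with base change on $T$, so $\cSec(\cY /Y)$ is equivalent over $S$ to the stack of $1$-twisted invertible sheaves on $\cY$ relative to $S$. Writing $i\colon \cY _N\to \cY$ for the natural map --- an affine, faithfully flat morphism, indeed a $\mathbf{G}_m$-torsor --- the functor $i^*$ together with faithfully flat descent identifies $1$-twisted invertible sheaves on $\cY _T$ with $1$-twisted invertible sheaves on $\cY _{N, T}$, using Lemma \ref{L:1.1} in degree $0$, where $i^*$ and $i_*^{(1)}$ are exact. Since an invertible sheaf on $\cY _{N, T}$ is automatically flat over $T$ (it is flat over $\cY _{N, T}$, which is flat over $T$), while conversely a $T$-flat coherent sheaf that is invertible on every geometric fibre over $T$ is itself locally free of rank $1$, we conclude that $\cSec(\cY /Y)$ is equivalent to the fibered category sending $T$ to the groupoid of $S$-flat coherent sheaves on $\cY _{N, T}$ that are concentrated in $\mu _N$-weight $1$ and invertible on every geometric fibre over $T$.

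Next, since $\cY _N\to Y$ is a gerbe banded by the finite flat group scheme $\mu _N$ and $Y\to S$ is proper, flat, and of finite presentation, the same is true of $\cY _N\to S$; hence the stack $\mathscr{C}$ of $S$-flat coherent sheaves on $\cY _N$ is an algebraic stack locally of finite presentation over $S$ (the algebraicity of the stack of coherent sheaves on a proper algebraic stack, in the twisted form underlying the construction of moduli of twisted sheaves in \cite{MR2309155}). Inside $\mathscr{C}$, the condition of being concentrated in $\mu _N$-weight $1$ is open and closed, since every quasi-coherent sheaf on a $\mu _N$-gerbe decomposes canonically into pieces indexed by the characters of $\mu _N$; and the condition of being invertible on every geometric fibre is open, by the usual semicontinuity --- for an $S$-flat coherent sheaf the locus in $\cY _N$ where it fails to be $\cY _N$-flat is closed, hence proper over $S$, so its image in $S$ is closed, and over the complement the rank is locally constant because $f$ has geometrically connected fibres (as $\mls O_S\simeq f_*\mls O_Y$ universally). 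By the previous paragraph $\cSec(\cY /Y)$ is therefore an open substack of $\mathscr{C}$, and in particular an algebraic stack locally of finite presentation over $S$.

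Finally, the automorphism group of a section $s\colon Y_T\to \cY _T$ in the groupoid $\cSec(\cY /Y)(T)$ is its automorphism group as an object of the $\mathbf{G}_m$-gerbe $\cY _T$ over $Y_T$, which is canonically $\mathbf{G}_m(Y_T) = H^0(Y_T, \mls O_{Y_T})^{\times} = H^0(T, \mls O_T)^{\times} = \mathbf{G}_m(T)$, again using $\mls O_S\simeq f_*\mls O_Y$ universally. Thus the inertia stack of $\cSec(\cY /Y)$ is $\mathbf{G}_m\times _S\cSec(\cY /Y)$ with $\mathbf{G}_m$ central, acting by scaling. Rigidifying $\cSec(\cY /Y)$ along this flat, finitely presented, central copy of $\mathbf{G}_m$ produces an algebraic stack $\Sec(\cY /Y)$ together with a morphism $\cSec(\cY /Y)\to \Sec(\cY /Y)$ which is a gerbe banded by $\mathbf{G}_m$; since $\Sec(\cY /Y)$ has trivial inertia it is an algebraic space, which proves the lemma.

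I expect the main difficulty to be organizational rather than conceptual: ensuring that the identification of $\cSec(\cY /Y)$ with the open substack of $\mathscr{C}$ is genuinely functorial and stable under arbitrary base change on $S$ (so that algebraicity is inherited), and writing out carefully the openness of the ``invertible on every geometric fibre'' locus and the constancy of the rank. It is worth emphasizing where the hypothesis that $\cY$ be a pushout of a $\mu _N$-gerbe enters: it is exactly what makes $\cY _N/S$ proper, so that its stack of coherent sheaves is algebraic --- the $\mathbf{G}_m$-gerbe $\cY /S$ is itself not proper, so one could not run this argument directly with $\cY$ in place of $\cY _N$.
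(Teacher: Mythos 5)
Your proof is correct and takes essentially the same route as the paper: identify $\cSec (\cY /Y)$ with the stack of $1$-twisted invertible sheaves on $\cY$, use Lemma \ref{L:1.1} to replace $\cY$ by the $\mu _N$-gerbe $\cY _N$ (which is proper over $S$), and deduce that the result is an algebraic stack which is a $\mathbf{G}_m$-gerbe over an algebraic space. The only difference is that where the paper concludes by citing \cite[1.1]{MR2480703} for the twisted Picard stack of $\cY _N$, you unpack that citation by realizing the stack as an open (weight-$1$, fibrewise invertible) substack of the stack of $T$-flat coherent sheaves on $\cY _N$ and then rigidifying the central $\mathbf{G}_m$-inertia.
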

\begin{proof} 
Let $\cY _N$ be a $\mu _N$-gerbe with pushout $\cY $.  The stack $\cSec (\cY /Y)$ is equivalent to the stack $\cPic ^{(1)}_{\cY }$ classifying $1$-twisted sheaves on $\cY$, and by \ref{L:1.1} this stack is in turn equivalent to the stack $\cPic ^{(1)}_{\cY _N}$ classifying $1$-twisted sheaves on $\cY _N$.  The result follows from these observations and \cite[1.1]{MR2480703}.
\end{proof}

\begin{rem}\label{R:2.3} Recall that if $X/k$ is a smooth projective variety over a field $k$ then every $\mathbf{G}_m$-gerbe over $X$ is torsion and therefore is the pushout of a $\mu _N$-gerbe for some $N>0$.
\end{rem}

\begin{pg}\label{P:2.4} Let $\cX \rightarrow X$ and $\cY \rightarrow Y$ be $\mathbf{G}_m$-gerbes over smooth projective $k$-schemes and let $K\in D(\cX \times \cY)^{(-1,1)}$ be a complex defining a functor
$$
\Phi ^K\colon D(\cX )^{(1)}\rightarrow D(\cY )^{(1)}.
$$
Let $A$ (resp. $B$)  denote $K^\vee \otimes (\omega _Y|_{\cY\times \cX})[\text{\rm dim}_Y]\in D(\cY\times \cX )^{(-1,1)}$ (resp. $K^\vee \otimes (\omega _X|_{\cY \times \cX })[\text{\rm dim}_X]\in D(\cY \times \cX )^{(-1,1)}$).  We will need the following mild generalization of classical results (e.g. \cite[1.2]{bondal1995semiorthogonaldecompositionalgebraicvarieties}).
\end{pg}

\begin{lem} The functor $\Phi ^{A}\colon D(\cY )^{(1)}\rightarrow D(\cX )^{(1)}$ (resp. $\Phi ^B$) is left (resp. right) adjoint to $\Phi ^K$. In particular, if $\Phi ^K$ is an equivalence then $\Phi ^A = \Phi ^B$ and this functor defines the inverse equivalence.   
\end{lem}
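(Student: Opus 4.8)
The plan is to transport the classical argument that identifies the adjoints of a Fourier--Mukai functor with convolution against the dualized kernel (as in \cite[1.2]{bondal1995semiorthogonaldecompositionalgebraicvarieties}) to the twisted setting, checking that every input survives passage to twisted sheaves.  Two preliminary points set this up.  First, $\cX\times\cY\to X\times Y$ is a gerbe over a smooth projective variety, so $\cX\times\cY$ is a smooth algebraic stack; hence $K$ is perfect, $K^\vee$ is defined with $K\lotimes K^\vee\simeq\mls O$, and $\operatorname{Hom}(M\lotimes K,N)\simeq\operatorname{Hom}(M,N\lotimes K^\vee)$ for all $M,N$.  Second, by Lemma \ref{L:1.1} (in its evident two-factor form, using Remark \ref{R:2.3}) we may replace $\cX$ and $\cY$ by $\mu_N$-gerbes representing the same Brauer classes; this leaves the categories $D(\cX)^{(1)},D(\cY)^{(1)},D(\cX\times\cY)^{(-1,1)}$, the object $K$, and the functors $\Phi^K,\Phi^A,\Phi^B$ unchanged up to canonical equivalence, and afterwards $\cX$, $\cY$, and $\cX\times\cY$ are smooth proper Deligne--Mumford stacks over $k$.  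In particular the projections $p\colon\cX\times\cY\to\cX$ and $q\colon\cX\times\cY\to\cY$ are smooth and proper, with relative dimensions $\dim Y$ and $\dim X$ and relative dualizing sheaves $\omega_p=\omega_Y|_{\cX\times\cY}$ and $\omega_q=\omega_X|_{\cX\times\cY}$ (pulled back from the coarse spaces, since the dualizing sheaf of a $\mu_N$-gerbe is pulled back from its base), and the classical six-functor formalism --- projection formula, flat base change along the Cartesian square of projections, and Grothendieck--Serre duality --- applies.

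Granting this, the argument is the usual adjunction chain.  Since $\rR\pi_*=0$ on $w$-twisted complexes with $w\neq 0$ for any gerbe $\pi$, the functors $\Phi^K,\Phi^A,\Phi^B$ extend to functors on the full derived categories $D(\cX),D(\cY),D(\cX\times\cY)$ supported on the relevant $(1)$-summands, and since those are orthogonal direct summands it suffices to produce the adjunctions there; also $K^\vee\in D(\cX\times\cY)^{(1,-1)}$, so $A$ and $B$ do lie in $D(\cY\times\cX)^{(-1,1)}$ and $\Phi^A,\Phi^B\colon D(\cY)^{(1)}\to D(\cX)^{(1)}$ are defined as in the Remark following the definition of Fourier--Mukai functor.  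For $F\in D(\cX)$ and $G\in D(\cY)$ one then has
\begin{align*}
\operatorname{Hom}_{D(\cY)}\bigl(\Phi^K F,\,G\bigr)
 &\simeq \operatorname{Hom}_{D(\cX\times\cY)}\bigl(\rL p^*F\lotimes K,\ \rL q^*G\lotimes\omega_q[\dim X]\bigr)\\
 &\simeq \operatorname{Hom}_{D(\cX\times\cY)}\bigl(\rL p^*F,\ \rL q^*G\lotimes K^\vee\lotimes\omega_q[\dim X]\bigr)\\
 &\simeq \operatorname{Hom}_{D(\cX)}\bigl(F,\ \rR p_*(\rL q^*G\lotimes B)\bigr) = \operatorname{Hom}_{D(\cX)}\bigl(F,\ \Phi^B G\bigr),
\end{align*}
where the first isomorphism is Grothendieck--Serre duality for $q$ (namely $\rR q_*\dashv q^!$ with $q^!G\simeq\rL q^*G\lotimes\omega_q[\dim X]$), the second uses that $K$ is perfect, and the third is the $(\rL p^*,\rR p_*)$ adjunction together with the projection formula; here $B=K^\vee\lotimes\omega_q[\dim X]=K^\vee\lotimes\omega_X|_{\cY\times\cX}[\dim X]$, regarded on $\cY\times\cX$ via the flip of factors.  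The mirror computation, starting from $\operatorname{Hom}_{D(\cY)}(G,\Phi^K F)$ and using the $(\rL q^*,\rR q_*)$ adjunction, perfectness of $K$, and the fact that $\rL p^*$ has left adjoint $M\mapsto\rR p_*(M\lotimes\omega_p[\dim Y])$ (Grothendieck duality for $p$), yields $\operatorname{Hom}_{D(\cY)}(G,\Phi^K F)\simeq\operatorname{Hom}_{D(\cX)}(\Phi^A G,F)$ with $A=K^\vee\lotimes\omega_p[\dim Y]=K^\vee\lotimes\omega_Y|_{\cY\times\cX}[\dim Y]$.  Restricting both adjunctions to the $(1)$-summands shows $\Phi^A$ is left adjoint and $\Phi^B$ is right adjoint to $\Phi^K$.

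For the final assertion: an equivalence of categories has a unique left and a unique right adjoint, each canonically isomorphic to its inverse, so if $\Phi^K$ is an equivalence then $\Phi^A\simeq(\Phi^K)^{-1}\simeq\Phi^B$ and this common functor is the inverse equivalence.  The only non-formal point above is the foundational one flagged in the first paragraph: establishing (or precisely citing) Grothendieck--Serre duality, the projection formula, and flat base change for the smooth proper projections in the twisted/stacky setting, and checking that $K$ and $\Phi^K$ are compatible with the reduction to $\mu_N$-gerbes.  Once that is in place the argument is word for word the untwisted one.
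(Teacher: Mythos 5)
Your proof is correct in outline and reaches the right formulas, but it takes a genuinely different technical route from the paper. Both arguments run the same classical adjunction chain (duality for one projection, perfectness of $K$, then the $(\rL p^*,\rR p_*)$ or $(\rL q^*,\rR q_*)$ adjunction), so the skeleton is shared; the difference is where the duality input comes from. You reduce to $\mu_N$-gerbes via Lemma \ref{L:1.1} and then invoke the full six-functor formalism (Grothendieck--Serre duality with identified relative dualizing sheaf, projection formula, base change) for the resulting stacky projections --- and you yourself flag this as the one non-formal point left to be established or cited. The paper is engineered to avoid exactly that input: it rigidifies only the target factor, passing through $\pi_2\colon \cX\times\cY\to\cX\times Y$, observes that $\rL p_{\cY}^*G\lotimes A$ is $(1,0)$-twisted and hence comes from $D(\cX\times Y)^{(1)}$ via a fully faithful pullback, and then applies only classical Serre duality for the representable smooth proper morphism $\cX\times Y\to\cX$ (a base change of $Y/\mathrm{Spec}\,k$). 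So no duality statement for gerbes or $\mu_N$-stacks is ever needed, and no reduction to $\mu_N$-gerbes (with the attendant compatibility checks for $K$ and $\Phi^K$ under the two-factor version of \ref{L:1.1}) has to be performed.

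One concrete inaccuracy in your justification: Remark \ref{R:2.3} gives a $\mu_N$-gerbe presentation for \emph{some} $N>0$, with no control on $N$; if $\mathrm{char}\,k$ divides $N$ (e.g.\ a Brauer class of order $p$ in characteristic $p$), the $\mu_N$-gerbes are \emph{not} Deligne--Mumford, so ``smooth proper DM stacks'' is not available. They are still proper flat tame Artin stacks with finite diagonal and trivial relative dualizing sheaf over their coarse spaces, so a duality citation in that generality would repair your argument, but as written the stated justification fails in positive characteristic --- and the paper's partial-rigidification trick sidesteps the issue entirely. Aside from this, your twist bookkeeping, the identification of $A$ and $B$ with the left/right adjoint kernels, and the final formal argument that an equivalence forces $\Phi^A\simeq\Phi^B\simeq(\Phi^K)^{-1}$ are all fine.
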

\begin{proof}
We prove that $\Phi ^A$ is left adjoint, leaving the very similar proof that $\Phi ^B$ is right adjoint to the reader.

  Let $\pi _2\colon \cX \times \cY\rightarrow \cX\times Y$ be the projection.  Let $F\in D(\cX )^{(1)}$ and $G\in D(\cY )^{(1)}$ be objects, and consider the diagram
  $$
  \xymatrix{
  & \cX \times \cY \ar[dd]_-{\pi _1}\ar[rd]^-{p_{\cY }}\ar[lddd]_-{p_{\cX }}& \\
  && \cY \ar[dd]^-{\pi _{\cY }} \\
  & \cX\times Y \ar[ld]^-{\bar p_{\cX }}\ar[rd]_-{\bar p_{Y }}& \\
  \cX&& Y .}
  $$
 Since $Y$ is smooth and projective, the functor $\rL \bar p_{\cX }^*(-)\lotimes \rL \bar p_Y^*\omega _Y[\text{dim}(Y)]\colon  D(\cX )^{(1)}\rightarrow D(\cX \times Y)^{(1)}$ is right adjoint to $\rR \bar p_{\cX *}\colon D(\cX \times Y)^{(1)}\rightarrow D(\cX )^{(1)}$.  Since the pullback functor $D(\cX \times Y)^{(1)}\rightarrow D(\cX \times \cY )^{(1, 0)}$ is fully faithful we conclude that 
 \begin{align*}
\text{Hom}_{\cX }(\Phi ^{A}(G), F) & \simeq \text{Hom}_{\cX }(\rR \bar p_{\cX *}(\rL p_{\cY }^*G\lotimes  A), F)\\
 & \simeq  \text{Hom}_{\cX \times \cY}(\rL p_{\cY}^*G\lotimes A,\rL p_{\cX }^*F\lotimes p_{\cY }^*\omega _Y[\text{dim}(Y)])\\
 & \simeq  \text{Hom}_{\cX \times \cY}(\rL p_{\cY }^*G, \rL p_{\cX}^*F\lotimes K)\\
 & \simeq  \text{Hom}_{\cY }(G, \Phi ^K(F)).
 \end{align*}
 This isomorphism is functorial in both $F$ and $G$, and therefore realizes $\Phi ^{A}$ as a left adjoint of $\Phi ^P$. 
\end{proof}

\begin{pg}\label{P:2.6}
The adjunction map $\Phi ^A\circ \Phi ^K\rightarrow \text{id}$ is realized as follows.  The composition $\Phi ^A\circ \Phi ^K$ is defined by the complex $\rR \text{pr}_{13*}(\rL \text{pr}_{12}^*K\lotimes \rL \text{pr}_{23}^*A)$, where the $\text{pr}_{ij}$ are the various  projections from $\mls X\times \mls Y\times \mls X$,  and the identity functor is given by $(\Delta _*\mls O_{\cX })^{(-1,1)}$.  The adjunction map is then given by the map
$$
\rL \Delta ^*\rR \text{pr}_{13*}(\rR \text{pr}_{12}^*K\lotimes \rL \text{pr}_{23}^*A)\simeq \rR \text{pr}_{1*}(K\lotimes K^\vee \lotimes \omega _Y[\text{dim}(Y)])\rightarrow \mls O_{\cX },
$$
induced by the evaluation map $K\otimes K^\vee \rightarrow \mls O_{\cX \times \cY  }$ and the trace map $\rR \text{pr}_{1*}(\omega _Y[\text{dim}(Y)])\rightarrow \mls O_{\cX }$. 
\end{pg}

\begin{lem}\label{L:2.6} Let $S$ be a scheme and let $K\in D(\cX _S\times _S\cY _S)^{(-1,1)}$ be a relatively perfect complex.  Then there exists an open subscheme $U\subset S$ such that a geometric point $\bar s\rightarrow S$ factors through $U$ if and only if $K_{\bar s}$ defines an equivalence $D(\cX _{\bar s})^{(1)}\rightarrow D(\cY _{\bar s})^{(1)}$.
\end{lem}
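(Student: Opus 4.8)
The strategy is the standard one for showing that ``being an equivalence'' is an open condition in a family: one encodes the equivalence property of a single Fourier--Mukai kernel in the vanishing of the cones of its adjunction unit and counit, assembles these cones into two global complexes over $S$, and then invokes the fact that the locus over which a relatively perfect complex restricts to zero on geometric fibers is open. Concretely, I would first globalize over $S$ the constructions of \ref{P:2.4} and \ref{P:2.6}. Let $A\in D(\cY_S\times_S\cX_S)^{(-1,1)}$ be the relative analogue of the object $A$ of \ref{P:2.4} (so that, over each point of $S$, $\Phi^A$ is left adjoint to $\Phi^K$, and $A_{\bar s}$ is the object attached to $K_{\bar s}$ over $\kappa(\bar s)$). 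Form the composite kernels $\rR\mathrm{pr}_{13*}(\rL\mathrm{pr}_{12}^*K\lotimes\rL\mathrm{pr}_{23}^*A)$ on $\cX_S\times_S\cX_S$ and $\rR\mathrm{pr}_{13*}(\rL\mathrm{pr}_{12}^*A\lotimes\rL\mathrm{pr}_{23}^*K)$ on $\cY_S\times_S\cY_S$, and let $C_1\in D(\cX_S\times_S\cX_S)^{(-1,1)}$ and $C_2\in D(\cY_S\times_S\cY_S)^{(-1,1)}$ be the cones of the evaluation/trace map to $(\Delta_{\cX_S*}\mls O_{\cX_S})^{(-1,1)}$ of \ref{P:2.6} and, respectively, of the coevaluation map from $(\Delta_{\cY_S*}\mls O_{\cY_S})^{(-1,1)}$. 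Since $K$, hence $A$, is relatively perfect over $S$, and since relative perfectness over $S$ is preserved by derived pullback along the (flat) projections, by derived tensor product, and by derived pushforward along the projections occurring above, both $C_1$ and $C_2$ are relatively perfect over $S$, and their formation commutes with arbitrary base change $S'\to S$ (cohomology and base change for relatively perfect complexes). In particular $(C_1)_{\bar s}$ and $(C_2)_{\bar s}$ are the corresponding cones formed over $\kappa(\bar s)$.

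Next I would fix a geometric point $\bar s\to S$ and apply the adjunction statement of \ref{P:2.4} and the description \ref{P:2.6} of the counit over the field $\kappa(\bar s)$ (using \ref{R:2.3} to know that $\cX_{\bar s}$ and $\cY_{\bar s}$ are pushouts of $\mu_N$-gerbes): $\Phi^{A_{\bar s}}$ is left adjoint to $\Phi^{K_{\bar s}}$, the counit $\Phi^{A_{\bar s}}\circ\Phi^{K_{\bar s}}\to\id$ is induced by a kernel map with cone $(C_1)_{\bar s}$, and the unit $\id\to\Phi^{K_{\bar s}}\circ\Phi^{A_{\bar s}}$ is induced by a kernel map with cone $(C_2)_{\bar s}$. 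Since a functor admitting a left adjoint is an equivalence if and only if the unit and counit of that adjunction are isomorphisms (for the converse one uses that the left adjoint of an equivalence is automatically a quasi-inverse), it follows that $K_{\bar s}$ defines an equivalence $D(\cX_{\bar s})^{(1)}\to D(\cY_{\bar s})^{(1)}$ if and only if $(C_1)_{\bar s}\simeq 0$ and $(C_2)_{\bar s}\simeq 0$.

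It then remains to see that this condition cuts out an open subset. I would take
$$
U:=\bigl(S\setminus\pi_1(\mathrm{Supp}\,C_1)\bigr)\cap\bigl(S\setminus\pi_2(\mathrm{Supp}\,C_2)\bigr),
$$
where $\pi_1,\pi_2$ are the structure morphisms to $S$. Each $\mathrm{Supp}\,C_i$ is closed, and since the underlying topological spaces of $\cX_S\times_S\cX_S$ and $\cY_S\times_S\cY_S$ are those of the proper $S$-schemes $X_S\times_S X_S$ and $Y_S\times_S Y_S$, the images $\pi_i(\mathrm{Supp}\,C_i)$ are closed in $S$, so $U$ is open. Finally, for a relatively perfect complex vanishing on a geometric fiber is detected by the support: $(C_i)_{\bar s}\simeq 0$ if and only if the fiber over $\bar s$ of the relevant product is disjoint from $\mathrm{Supp}\,C_i$, i.e.\ if and only if $s\notin\pi_i(\mathrm{Supp}\,C_i)$ (a relative Nakayama argument over the local rings $\mls O_{S,s}$, using the relative perfectness of $C_i$). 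Hence $\bar s$ factors through $U$ precisely when $(C_1)_{\bar s}\simeq 0$ and $(C_2)_{\bar s}\simeq 0$, which by the previous step is precisely when $K_{\bar s}$ defines an equivalence.

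The main obstacle I anticipate is not the formal argument above but the technical bookkeeping it rests on in the gerby setting: verifying that the composite kernel complexes $C_1,C_2$ are genuinely relatively perfect over $S$ and that their formation commutes with base change, given that the projections of products of $\mathbf{G}_m$-gerbes are not schematic and the structure morphisms of these products to $S$ are not proper as morphisms of stacks. This is handled by working throughout on the graded summands $D(-)^{(-1,1)}$, on which the relevant pushforwards reduce to pushforwards along proper flat morphisms of algebraic spaces (cf.\ the discussion in \ref{P:2.4}), or, equivalently, by replacing each $\mathbf{G}_m$-gerbe by a $\mu_N$-gerbe via \ref{L:1.1} and then invoking the usual theory of relatively perfect complexes together with cohomology and base change.
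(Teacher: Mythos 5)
Your proposal is correct and follows essentially the same route as the paper: the paper's proof likewise encodes the equivalence condition in the kernel-level adjunction map $\rR\mathrm{pr}_{13*}(\rL\mathrm{pr}_{12}^*K\lotimes\rL\mathrm{pr}_{23}^*A)\rightarrow(\Delta_*\mls O_{\cX})^{(-1,1)}$ of \ref{P:2.6} and deduces openness of the locus where it is an isomorphism on fibers from the derived Nakayama lemma, which is exactly your cone/support argument in different packaging. The only (harmless, arguably more careful) difference is that you track both the unit and the counit, while the paper's proof works with the single adjunction map of \ref{P:2.6}.
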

\begin{proof}
Consider the map
\begin{equation}\label{E:adjunctionmap}
\rR \text{pr}_{13*}(\rL \text{pr}_{12}^*K\otimes \rL \text{pr}_{23}^*A)\rightarrow (\Delta _*\mls O_{\cX })^{(-1,1)}
\end{equation}
realizing the adjunction map $\Phi ^A\circ \Phi ^K\rightarrow \text{id}$.  Then by the preceding discussion it suffices to show that there exists an open subset $U\subset S$ such that a geometric point $\bar s\rightarrow S$ factors through $U$ if and only if \eqref{E:adjunctionmap} induces an isomorphism in the fiber.  This follows from the derived version of Nakayama's lemma \cite[\href{https://stacks.math.columbia.edu/tag/0G1U}{Tag 0G1U}]{stacks-project}.
\end{proof}

\section{Proof of Theorem \ref{T:theorem1} (1) and (2)}

\begin{pg} We work in the setting of \ref{T:theorem1}.  So $X$ is a smooth proper geometrically connected scheme over a field $k$ and  $\pi \colon \cX \rightarrow X$ is a $\mathbf{G}_m$-gerbe.

Consider the $\mathbf{G}_m^2$-gerbe $\cX\times _X\cX$ over $X$.  There is a line bundle $\mc K$ on $\cX \times _X\cX$ defined by the $\mathbf{G}_m$-torsor which to a scheme $T$ and two maps $t_1, t_2\colon T\rightarrow \cX$ associates the $\mathbf{G}_m$-torsor $\underline {\text{Isom}}(t_1, t_2)$ over $T$.  Note that this sheaf $\mc K$ is $(-1,1)$-twisted and therefore defines a morphism $\cX \times _X\cX \rightarrow B\mathbf{G}_m$ compatible with the morphism $m\colon \mathbf{G}_m^2\rightarrow \mathbf{G}_m$ ($(u, v)\mapsto u^{-1}v$).
\end{pg}

\begin{lem}\label{L:0} The induced map
$$
\cX \times _X\cX \rightarrow \cX \times _XB\mathbf{G}_{m, X}
$$
is an isomorphism.
\end{lem}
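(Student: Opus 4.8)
The plan is to construct an explicit inverse, or rather to recognize both sides as $\mathbf{G}_m$-gerbes over $X$ and check that the map is compatible with the gerbe structure in a way that forces it to be an isomorphism. First I would note that $\cX\times_X\cX$ is a $\mathbf{G}_m^2$-gerbe over $X$, and that the class $\mc K$ we have constructed — the $\underline{\text{Isom}}$-torsor — together with the first projection $p_1\colon \cX\times_X\cX\to\cX$ gives a morphism to $\cX\times_X B\mathbf{G}_{m,X}$, which is itself a $\mathbf{G}_m^2$-gerbe over $X$ (the product of the $\mathbf{G}_m$-gerbe $\cX$ with the trivial $\mathbf{G}_m$-gerbe $B\mathbf{G}_{m,X}$). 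Because the problem is local on $X$ for the fppf (or even étale) topology — both sides and the map are compatible with base change, and being an isomorphism of algebraic stacks may be checked fppf-locally on the base $X$ — I would reduce to the case where the gerbe $\cX$ is trivial, i.e. $\cX = B\mathbf{G}_{m,X}$.

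In that local situation the statement becomes the assertion that
$$
B\mathbf{G}_{m,X}\times_X B\mathbf{G}_{m,X}\;\longrightarrow\; B\mathbf{G}_{m,X}\times_X B\mathbf{G}_{m,X}
$$
given on $T$-points by $(t_1,t_2)\mapsto (t_1,\underline{\text{Isom}}(t_1,t_2))$ is an equivalence. Here a $T$-point of the source is a pair of $\mathbf{G}_m$-torsors $(L_1,L_2)$ on $T$, and the map sends it to $(L_1, L_1^{\vee}\otimes L_2)$ (using that $\underline{\text{Isom}}$ of two torsors is the difference torsor). This is manifestly invertible, with inverse $(L_1,M)\mapsto (L_1, L_1\otimes M)$; both composites are canonically isomorphic to the identity, and the isomorphisms are natural in $T$, so we have an equivalence of the local models. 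One should check that the local inverse is compatible with the descent data coming from a trivializing cover of $\cX$, i.e. that it glues; this is a routine cocycle check since everything is expressed functorially in terms of torsors.

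The only real subtlety is bookkeeping with the twisting: one must verify that the morphism $\cX\times_X\cX\to\cX\times_X B\mathbf{G}_{m,X}$ is genuinely compatible with the homomorphism $\mathbf{G}_m^2\to\mathbf{G}_m^2$, $(u,v)\mapsto (u, u^{-1}v)$, on stabilizer groups — which is exactly what the sign conventions on $\mc K$ being $(-1,1)$-twisted encode — so that after the fppf reduction the map really is the one written above and not a twist of it. Granting that compatibility, a morphism of $\mathbf{G}_m^2$-gerbes over the same base which induces an isomorphism on stabilizers is automatically an isomorphism of stacks (it is representable, étale, and surjective of relative dimension zero, hence an isomorphism), so in fact one does not even need to exhibit the inverse by hand; exhibiting it is just the most transparent way to see it. I expect the verification of the stabilizer-compatibility to be the main — though still minor — obstacle.
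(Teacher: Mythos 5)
Your proposal is correct and ends up at exactly the paper's argument: the map is a morphism of gerbes over $X$ compatible with the band isomorphism $\mathbf{G}_m^2\to\mathbf{G}_m^2$, $(u,v)\mapsto(u,u^{-1}v)$, and such a morphism is automatically an isomorphism -- that one line is the paper's entire proof. Your explicit fppf-local computation with the difference torsor is a harmless unwinding of the same fact (and note that once you know "isomorphism" can be checked fppf-locally, no gluing of the local inverse is needed).
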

\begin{proof}
    Indeed this is a morphism of gerbes over $X$ and the map $\mathbf{G}_m^2\rightarrow \mathbf{G}_m^2$ sending $(u,v)$ to $(u, u^{-1}v)$ is an isomorphism.
\end{proof}

\begin{pg} Let $\cAut _\cX$ denote the fibered category over the category of $k$-schemes which to any $T/k$ associates the groupoid of isomorphisms of $\cX _T \rightarrow \cX_T$ inducing the identity map $\mathbf{G}_m\rightarrow \mathbf{G}_m$ on stabilizer groups.  It is straightforward to verify that this is, in fact, a stack for the \'etale topology.

Note that for such an equivalence $\sigma \colon \cX _T\rightarrow \cX _T$ an automorphism of $\sigma $ is given by a lifting $\tilde \sigma \colon \cX \rightarrow \mc I_{\cX } = \cX \times \mathbf{G}_m$ of $\sigma $ to the inertia stack of $\cX $.  Since $X$ is smooth proper and geometrically connected we have $\mathbf{G}_m(\cX _T) = \mathbf{G}_m(T)$.  It follows that $\cAut _\cX $ is a $\mathbf{G}_m$-gerbe over a sheaf of groups $\Aut _\cX $.
\end{pg}

\subsection{Proof of \ref{T:theorem1} (1)}
If $\sigma \colon \cX _T\rightarrow \cX _T$ is an object of $\cAut _{\cX}(T)$ for a $k$-scheme $T$ then by passing to $\mathbf{G}_m$-rigidifications we get an induced automorphism $\bar \sigma \colon X_T\rightarrow X_T$.  This defines a morphism 
$$
c\colon \cAut _\cX \rightarrow \Aut _X,
$$
and therefore also a morphism $\bar c\colon \Aut _\cX \rightarrow \Aut _X$.

    Let $\alpha \colon X\times \Aut _X\rightarrow X\times \Aut _X$ be the universal automorphism over $\Aut _X$, and let $\cY \rightarrow X\times \Aut _X$ be the $\mathbf{G}_m$-gerbe given by the difference of $\cX_{\Aut _X}$ and $\alpha ^*\cX _{\Aut _X}$.  Then as a stack over $\Aut _X$ we have $\cAut (\cX)\simeq \cSec (\cY/X\times \Aut _X)$.  Now as noted in \ref{R:2.3} above,   the gerbe $\cX$ is the pushout of a $\mu _N$-gerbe for some $N>0$, and therefore the same is true for $\cY $.  We can therefore apply \ref{L:1} with $S  = \Aut _X$, $Y = X\times \Aut _X$, and the stack $\cY$, to conclude that $\cAut (\cX )$ is an algebraic stack locally of finite type over $k$.  This proves \ref{T:theorem1} (1). \qed

\subsection{Proof of \ref{T:theorem1} (2)}
The fibers of $\bar c$ can be understood as follows.  For a $k$-scheme $T$ and automorphism $\sigma \colon \cX _T\rightarrow \cX _T$ with induced automorphism $\bar \sigma \colon X_T\rightarrow X_T$ the groupoid of all automorphisms $\cX _T\rightarrow \cX _T$ over $\bar \sigma $ can be identified with the groupoid of liftings
$$
\xymatrix{
& \cX _T\times _{X_T}\cX _T\ar[d]_-{\text{pr}_1}\\
\cX _T\ar@{-->}[ru]\ar[r]^-\sigma & \cX _T,}
$$
which using \ref{L:0} identifies the groupoid of automorphisms over $\bar \sigma $ with the groupoid of $0$-twisted sheaves on $\cX _T$, or equivalently with $\cPic (X_T)$.    From this \ref{T:theorem1} (2) follows. \qed 

\begin{rem}
In fact the above discussion defines an action
$$
\cAut _{\cX }\times \cPic _{X/k}\rightarrow \cAut _{\cX}
$$
which upon passing to rigidifications determine an action
$$
\Aut _{\cX }\times \Pic _{X/k}\rightarrow \Aut _{\cX }.
$$
for which the induced map
$$
\Aut _{\cX }\times \Pic _{X/k}\rightarrow \Aut _{\cX }\times _{\Aut _X}\Aut _{\cX }
$$
is an isomorphism.

Note that taking $\sigma = \text{id}$ in the above we get a morphism of stacks $\cPic _{X/k}\hookrightarrow \cAut _{\cX }$ which induces a homomorphism $\Pic _{X/k}\hookrightarrow \Aut _{\cX }$ defining the above action. 
\end{rem}

\section{Proof of Theorem \ref{T:theorem1} (3)}

It suffices to consider the case when $k$ is algebraically closed.
Let $\cX_N$ be a $\mu _N$-gerbe inducing $\cX$, with $N$ invertible in $k$, and let $\bar \alpha \in \Aut ^0_{X}(k)$ be an automorphism of $X$ in the connected component of the identity.  To prove that $\bar \alpha $ lifts to $\Aut ^0_{\cX }(k)$ it suffices to show that the two gerbes $\bar \alpha ^*\cX _N$ and $\cX _N$ are isomorphic.

For this we consider the universal case.  Let $\bar a:X\times \Aut ^0_{X}\rightarrow X\times \Aut ^0_X$ be the universal automorphism, and let $\rho :X\times \Aut ^0_X\rightarrow X$ be the composition of $\bar a$ with the first projection.  Let $[\cX _N]\in H^2(X, \mu _N)$ be the class of $\cX _N$ and let $\rho ^*[\cX _N]\in H^0(\Aut ^0_X, \rR ^2\pi _*\mu _N)$ be the pullback, where $\pi :X\times \Aut ^0_X\rightarrow \Aut ^0_X$ is the second projection.  The fiber of $\rho ^*[\cX _N]$ at a point $\bar \alpha \in \Aut ^0_X(k)$ is the class $[\bar \alpha ^*\cX _N]\in H^2(X, \mu _N)$.  If $q:X\times \Aut ^0_X\rightarrow X$ is the first projection, then we can also consider the constant class $q^*[\cX _N]\in H^0(\Aut ^0_X, \rR ^2\pi _*\mu _N)$, and it suffices to show that the two classes $q^*[\cX _N]$ and $\rho ^*[\cX _N]$ are equal.  To see this note that since $X/k$ is smooth and proper and $N$ is invertible in $k$ the sheaf $\rR ^2\pi _*\mu _N$ is a locally constant \'etale sheaf on $\Aut ^0_X$, and therefore it suffices to show that the two classes are equal at a single point of $\Aut ^0_X$.  Since they agree at the identity the result follows. \qed

\section{Proof of theorem \ref{T:theorem2}}

Fix a field $k$.

\begin{pg} Let $\cX $ be a $\mathbf{G}_m$-gerbe over a smooth proper $k$-scheme $X$ and let $\cX ^{(2)}$ denote the pushout of the $\mathbf{G}_m^2$-gerbe $\cX \times \cX $ over $X\times X$ along the homomorphism $\mathbf{G}_m^2\rightarrow \mathbf{G}_m$ sending $(u,v)$ to $u^{-1}v$.  So $\cX ^{(2)}$ is a $\mathbf{G}_m$-gerbe over $X\times X$.

If $\alpha \colon \cX \rightarrow \cX$ is an automorphism with associated graph $\Gamma _\alpha := (\text{id}, \alpha )\colon \cX \rightarrow \cX \times \cX$ then the composition of $\Gamma _\alpha $ with  the projection $q\colon \cX \times \cX \rightarrow \cX ^{(2)}$ induces the trivial homomorphism $\mathbf{G}_m\rightarrow \mathbf{G}_m$, and therefore descends to a morphism $\gamma _\alpha  \colon X\rightarrow \cX ^{(2)}$ such that the square
$$
\xymatrix{
\cX \ar[r]^-{\Gamma _\alpha }\ar[d]& \cX \times \cX \ar[d]^-q\\
X\ar[r]^-{\gamma _\alpha }& \cX ^{(2)}}
$$
is cartesian.  In particular, taking $\alpha $ the identity map we get a morphism $\delta \colon X\rightarrow \cX ^{(2)}$ over the diagonal map $\Delta \colon X\rightarrow X\times X$.

For a $k$-scheme $S$ pullback along the base change $q_S:(\cX \times \cX)_S \rightarrow \cX ^{(2)}_S$ of $q$ to $S$ induces an equivalence of categories $D(\cX _S^{(2)})^{(n)}\simeq D((\cX \times \cX )_S)^{(-n,n)}$ for all $n\in \mathbf{Z}$.  In particular, for $n=1$ we get an equivalence $D(\cX _S^{(2)})^{(1)}\simeq D((\cX \times \cX )_S)^{(-1,1)}$ sending $(\gamma _{\alpha *}\mls O_X)^{(1)}$ to $(\Gamma _{\alpha *}\mls O_{\cX })^{(-1,1)}$.
\end{pg}

\begin{defn} A complex $K\in D(\cX ^{(2)})^{(1)}$ \emph{satisfies the Rouquier condition} if it is of the form $(\gamma _{\alpha *}\mls O_X)^{(1)}$ for some $\alpha \in \cAut _{\cX }(k)$.
\end{defn}

\begin{rem} Note that this is equivalent to the condition that the pullback $q^*K\in D(\cX \times \cX )^{(-1,1)}$ is an object of $\mls R_{\cX }$ (defined in \ref{P:Rdef}).    Thus $\cR _{\cX }$ can be viewed as the stack which to any $k$-scheme $S$ associates the groupoid of perfect complexes $K\in D(\cX _S^{(2)})^{(1)}$ such that for all geometric points $\bar s\rightarrow S$ the fiber $K_{\bar s}\in D(\cX _{\bar s}^{(2)})^{(1)}$ satisfies the Rouquier condition.
\end{rem}

\begin{pg} For $K\in D(\cX ^{(2)})^{(1)}$ define
$$
\Phi ^K\colon D(\cX )^{(1)}\rightarrow D(\cX )^{(1)}
$$
to be the functor sending $F\in D(\cX )^{(1)}$ to $\rR \text{pr}_{2*}(\rL \text{pr}_1^*F\otimes q^*K).$
Note here that since $\rL \text{pr}_1^*F$ is $(1,0)$-twisted the complex $\rL \text{pr}_1^*F\otimes q^*K$ is $(0,1)$-twisted on $\cX \times \cX $.
\end{pg}

\begin{prop}\label{P:keyprop} Let $S$ be a scheme and $K\in D(\cX ^{(2)})^{(1)}$ a complex.   There exists a unique open subset $U\subset S$ such that a geometric point $\bar s\rightarrow S$ factors through $U$ if and only if the complex $K_{\bar s}\in D(\cX ^{(2)}_{\bar s})^{(1)}$ satisfies the Rouquier condition.  Furthermore, the restriction $K_U\in D(\cX _U^{(2)})$ is of the form $(\gamma _{\alpha *}\mls O_{X})^{(1)}$ for a unique automorphism $\alpha :\cX _U\rightarrow \cX _U$.
\end{prop}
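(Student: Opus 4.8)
The plan is to carry out, in families and in the twisted setting, the argument by which one recognises ``standard'' Fourier--Mukai equivalences (Bondal--Orlov, Rouquier \cite{Rouquier}). Throughout write $X_S$, $\cX_S$, $\cX^{(2)}_S$ for the base changes to $S$ and, via the equivalence $q^{*}\colon D(\cX^{(2)}_S)^{(1)}\simeq D((\cX_S\times\cX_S)_S)^{(-1,1)}$, identify $K$ with $q^{*}K$ and $\Phi^{K}$ with the associated Fourier--Mukai functor; we may assume $K$ is relatively perfect over $S$. The first step is to apply Lemma \ref{L:2.6} to $q^{*}K$, producing an open subset $V\subseteq S$ over which $K_{\bar s}$ defines an equivalence $D(\cX_{\bar s})^{(1)}\to D(\cX_{\bar s})^{(1)}$. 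Since the Rouquier condition forces $\Phi^{K_{\bar s}}=\alpha^{*}$ for some automorphism $\alpha$, hence an equivalence, the Rouquier locus is contained in $V$; I may therefore replace $S$ by $V$ and assume every $K_{\bar s}$ is an equivalence kernel, in particular $K_{\bar s}\neq 0$.

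The heart of the argument is the following characterisation: given that $\Phi^{K_{\bar s}}$ is an equivalence, $K_{\bar s}$ satisfies the Rouquier condition \emph{if and only if} $K_{\bar s}$ is concentrated in degree $0$, is flat over $X_{\bar s}$ along the first projection, and has fibres of length $\le 1$ over $X_{\bar s}$. The ``only if'' direction is immediate, since $(\gamma_{\alpha*}\mls O_{X_{\bar s}})^{(1)}$ is the pushforward of a $1$-twisted line bundle from the $\mathbf{G}_m$-gerbe over the graph $\Gamma_{\bar\alpha}\subseteq X_{\bar s}\times X_{\bar s}$, which the first projection carries isomorphically onto $X_{\bar s}$. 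For ``if'': flatness over the connected scheme $X_{\bar s}$ forces the fibre length to be constant, hence identically $1$ (it is nonzero as $K_{\bar s}\neq 0$); working fppf-locally on $X_{\bar s}$ and descending, one finds that $K_{\bar s}$ is the pushforward of a $1$-twisted line bundle from the $\mathbf{G}_m$-gerbe over the graph $\Gamma_{g_{\bar s}}$ of a morphism $g_{\bar s}\colon X_{\bar s}\to X_{\bar s}$, and this line bundle amounts to a section $X_{\bar s}\to\cX^{(2)}_{\bar s}$ over $(\mathrm{id},g_{\bar s})$. Then $\Phi^{K_{\bar s}}$ is, up to tensoring with a line bundle, pullback along a morphism of $\mathbf{G}_m$-gerbes lying over $g_{\bar s}$, and such a functor can be an equivalence only if $g_{\bar s}$ is an isomorphism (it must be essentially surjective, forcing $g_{\bar s}$ surjective, and fully faithful, forcing $g_{\bar s}$ quasi-finite of degree one, hence an isomorphism since $X_{\bar s}$ is smooth). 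Thus $g_{\bar s}\in\Aut(X_{\bar s})$, and by the correspondence established in the proof of \ref{T:theorem1} --- between sections of $\cX^{(2)}$ over the graph of an automorphism of $X$ and automorphisms of $\cX$ over it inducing the identity on $\mathbf{G}_m$ --- this section corresponds to an $\alpha\in\cAut_{\cX_{\bar s}}(\kappa(\bar s))$ with $K_{\bar s}=(\gamma_{\alpha*}\mls O_{X_{\bar s}})^{(1)}$, so $K_{\bar s}$ is Rouquier.

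Granting this, openness of $U$ reduces to three standard facts: the locus where $K_{\bar s}$ is concentrated in degree $0$ is open (semicontinuity for relatively perfect complexes, in the circle of ideas of \cite[\href{https://stacks.math.columbia.edu/tag/0G1U}{Tag 0G1U}]{stacks-project}); on that locus $K$ is $S$-flat, and the locus where moreover $K$ is flat over $X_S$ along the first projection is the complement of the image in $S$ of a closed subset of $X_S\times_S X_S$, hence open by properness of $X_S\times_S X_S\to S$; and, on that further locus, upper semicontinuity of fibre dimension shows that the subset of $X_S$ over which the support of $K$ is finite over $X_S$ is open, over which the fibre length of $K$ is locally constant, so that the subset of $X_S$ where the fibre of $K$ has length $\le 1$ is open, and the complement of the image in $S$ of its closed complement (again using properness of $X_S\to S$) is the locus over which \emph{all} fibres of $K$ over $X_{\bar s}$ have length $\le 1$. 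Intersecting these open subsets with $V$ yields $U$; it equals the Rouquier locus, and is unique since an open subscheme is determined by its geometric points.

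It remains to produce $\alpha$ globally and check uniqueness. After replacing $S$ by $U$, $K$ is an $S$-flat sheaf on $\cX^{(2)}_S$, flat over $X_S$ along the first projection, all of whose fibres over $X_S$ are length-one $1$-twisted skyscrapers. By the moduli description of twisted sheaves recalled in \S\ref{S:section2} (the moduli of length-one $1$-twisted sheaves on the fibres of $\cX^{(2)}_S\to X_S$ being $\cX^{(2)}_S$ itself, as a consequence of \ref{L:1} and \cite{MR2480703}), $K$ determines a section $\gamma\colon X_S\to\cX^{(2)}_S$ of the first projection, from which $K$ is recovered; fibrewise $\gamma$ is the $\gamma_{\alpha}$ produced in the second step. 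The composite $g\colon X_S\xrightarrow{\gamma}\cX^{(2)}_S\to X_S\times_S X_S\to X_S$ (second projection) is then fibrewise an automorphism and hence, $X_S\to S$ being proper, flat and finitely presented, an $S$-automorphism of $X_S$; thus $\gamma$ is a section over the graph of $g$, and the correspondence of \ref{T:theorem1} applied over $S$ yields the unique automorphism $\alpha\colon\cX_S\to\cX_S$ over $g$ inducing the identity on $\mathbf{G}_m$ with $\gamma=\gamma_{\alpha}$, whence $K\simeq(\gamma_{\alpha*}\mls O_{X_S})^{(1)}$. For uniqueness: an isomorphism $(\gamma_{\alpha*}\mls O_{X_S})^{(1)}\simeq(\gamma_{\alpha'*}\mls O_{X_S})^{(1)}$ forces the two supports to agree, hence $\bar\alpha=\bar\alpha'$, and then $\alpha$ and $\alpha'$ differ by an automorphism of $\cX_S$ over $\mathrm{id}_{X_S}$, i.e.\ by a line bundle on $X_S$, which is trivial exactly because the corresponding $1$-twisted line bundles are isomorphic; so $\alpha=\alpha'$. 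The main obstacle is the characterisation of the second step --- in particular the passage from ``the fibre is a line bundle on the gerbe over the graph of a morphism $g_{\bar s}$'' to ``$g_{\bar s}$ is an isomorphism'', the twisted family version of the Bondal--Orlov/Rouquier recognition of standard equivalences; the openness bookkeeping and the recovery of $\alpha$ are then routine given \S\ref{S:section2} and the proof of \ref{T:theorem1}.
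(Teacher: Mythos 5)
Your route is genuinely different from the paper's. The paper, after the same reduction via \ref{L:2.6} to the locus where $\Phi^K$ is an equivalence, never characterizes Rouquier kernels intrinsically: it \emph{propagates} the Rouquier property from a geometric point to an \'etale neighborhood by pure deformation theory --- derived Nakayama to make $K$ an $S$-flat sheaf near $\bar s$, passage to the strict henselization plus Grothendieck existence for twisted sheaves, and then a small-extension lifting step in which $\mls Hom(K|_{\cX_{A'}^{(2)}},K|_{\cX_{A'}^{(2)}})$ is shown to be the structure sheaf of a flat closed subscheme lifting the graph, whence a lift $\bar\alpha_{A'}$ and then $\alpha_{A'}$. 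You instead prove a fibrewise \emph{recognition} statement (equivalence kernel $+$ sheaf in degree $0$, flat of length one over $\text{pr}_1$ $\Rightarrow$ Rouquier), deduce openness from semicontinuity and properness, and recover $\alpha$ over $U$ by a relative section/moduli argument; your openness bookkeeping and the recovery and uniqueness of $\alpha$ are fine and consistent with what the paper does at the end of the proof of \ref{T:theorem2}. The trade-off is that the paper's argument only ever needs to deform a kernel already known to be Rouquier, while yours must classify all equivalence kernels of the given shape --- which is exactly where the difficulty concentrates.

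And there is a genuine gap there, in positive characteristic. From full faithfulness tested on twisted skyscrapers you get that $g_{\bar s}$ is injective on closed points (if $g(x)=g(x')$ with $x\neq x'$ the images would have nonzero Homs), but injectivity on points is \emph{not} ``quasi-finite of degree one'': a bijective morphism of smooth projective varieties can be purely inseparable of degree $>1$ (Frobenius), and smoothness of $X_{\bar s}$ does not rescue the conclusion. So the step ``fully faithful, forcing $g_{\bar s}$ quasi-finite of degree one, hence an isomorphism'' is unjustified as written; it is exactly the point where a Frobenius-like $g$ must be excluded. The statement you want is true, but needs an extra argument: for instance, once $g$ is known injective hence finite, $\Phi^{K_{\bar s}}\simeq g_*(-\otimes L)$ is t-exact and faithful on twisted sheaves, so the inverse equivalence also takes twisted skyscrapers to twisted skyscrapers of length one; its kernel is then again of graph type, producing a morphism $h$ with $h\circ g=\id$ and $g\circ h=\id$ (compose the kernels and compare with the diagonal), so $g$ is an automorphism. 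Alternatively one can detect inseparability by testing full faithfulness on a twisted sheaf of positive rank rather than on skyscrapers. With such a fix (or with a characteristic-zero hypothesis, under which your argument is complete), your proof goes through; note that the paper's deformation-theoretic proof avoids this issue entirely, which is one reason it is phrased the way it is.
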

\begin{proof}
By \ref{L:2.6} it suffices to consider the case when $\Phi ^K$ is an equivalence.

Let $\bar s\rightarrow S$ be a geometric point such that the complex $K_{\bar s}$ satisfies the Rouquier condition.  We show that there exists an \'etale neighborhood $W\rightarrow S$ of $\bar s$, an automorphism $\alpha \colon \cX _W\rightarrow \cX _W$ defining a point in $\cAut _{\cX }(W)$ such that $K|_{\cX _W^{(2)}} = (\gamma _{\alpha *}\mls O_{\cX })^{(1)}$. This suffices for proving the proposition. 

Fix an integer $N$ such that there exists a $\mu _N$-gerbe $\mls X_N$ inducing $\mls X$, and let $K_N$ denote the corresponding complex on $\mls X_N$. 
 By the derived Nakayama lemma \cite[\href{https://stacks.math.columbia.edu/tag/0G1U}{Tag 0G1U}]{stacks-project} there exists an open neighborhood around  the image of $\bar s$ over which the complex $K$ is a sheaf flat over $S$ concentrated in degree $0$.   Replacing $S$ by this open neighborhood we may assume that $K$ is a sheaf.  By a standard limit argument we may further replace $S$ by the strict henselization of $S$ at $\bar s$, and then using the Grothendieck existence theorem for $1$-twisted sheaves on $\mls X$, which holds by the corresponding result for $\mls X_N$, it suffices to prove the following deformation theoretic result.  Let $A'\rightarrow A$ be a surjective morphism of artinian local rings over $S$ with kernel $J$ annihilated by $\mathfrak{m}_A$ and residue a field $k$ (note that then $J$ can be viewed as a $k$-vector space).  Suppose given an automorphism $\alpha _A\colon \cX _{A}\rightarrow \cX _A$ over $A$ such that $K|_{\cX _A^{(2)}}\simeq (\gamma _{\alpha _A*}\mls O_{X_A})^{(1)}$.  We then show that there exists a lifting $\alpha _{A'}\colon \cX _{A'}\rightarrow \cX _{A'}$ of $\alpha _A$ such that $K|_{\cX _{A'}^{(2)}}\simeq  (\gamma _{\alpha _{A'}*}\mls O_{X_{A'}})^{(1)}$.

For this note first that $K|_{\cX _{A'}^{(2)}}$ is a sheaf concentrated in degree $0$ and flat over $A'$:  This follows from noting that we have a distinguished triangle
$$
(\gamma _{\alpha _k*}\mls O_{X_k})^{(1)}\otimes _kJ\rightarrow K|_{\cX _{A'}^{(2)}}\rightarrow (\gamma _{\alpha _A*}\mls O_{X_A})^{(1)}\rightarrow (\gamma _{\alpha _k*}\mls O_{X_k})^{(1)}\otimes _kJ[1]
$$
and looking at the associated long exact sequence of cohomology sheaves.  Furthermore, applying $R\mls Hom(K, -)$ to this sequence and observing that $\mls Hom _{\cX _{A'}^{(2)}}(K|_{\cX _{A'}^{(2)}}, (\gamma _{\alpha *}\mls O_{X_k})^{(1)})$ is the $0$-twisted sheaf given by $\mls O_{\Gamma _{\bar \alpha _A}}$ (the pullback to $\cX _{A}^{(2)}$ of the structure sheaf of the graph of $\bar \alpha _A\colon X_A\rightarrow X_A$) we get an  exact sequence
$$
\xymatrix{
0\ar[r]& \mls O_{\Gamma _{\bar \alpha _k}}\otimes J\ar[r] & \mls Hom _{\cX _{A'}^{(2)}}(K|_{\cX _{A'}^{(2)}}, K|_{\cX _{A'}^{(2)}})\ar[r]& \mls O_{\Gamma _{\bar \alpha _A}}\ar[r]& 0,}
$$
where the right exactness follows from the observation that the surjection $\mls O_{\cX _{A'}^{(2)}}\rightarrow \mls O_{\Gamma _{\bar \alpha _A}}$ factors through  the natural map
\begin{equation}\label{E:5.6.1}
\mls O_{\cX _{A'}^{(2)}}\rightarrow \mls Hom _{\cX _{A'}^{(2)}}(K|_{\cX _{A'}^{(2)}}, K|_{\cX _{A'}^{(2)}}).
\end{equation}
From this it also follows that the map \eqref{E:5.6.1} is surjective and that the target is a coherent $0$-twisted sheaf of algebras defining a closed subscheme $Z\subset X_{A'}^2$ flat over $A'$ whose reduction to $A$ is the graph of an automorphism.  We conclude that $Z$ is the graph of an automorphism $\bar \alpha _{A'}$ of $X_{A'}$ lifting $\bar \alpha _A$.  Consider the $\mathbf{G}_m$-gerbe $\mls G\rightarrow X_{A'}$ given by the fiber product of the diagram
$$
\xymatrix{
& \cX _{A'}^{(2)}\ar[d]\\
X_{A'}\ar[r]^-{(\text{id}, \bar \alpha _{A'})}& X_{A'}\times _{A'}X_{A'}.}
$$
It follows from the preceding discussion that $K|_{\cX _{A'}}$ is a $1$-twisted sheaf on $\mls G$ and therefore defines a section $s\colon X\rightarrow \mls G$.  Composing with the map $\mls G\rightarrow \cX ^{(2)}_{A'}$ and making the base change $\cX _{A'}\times _{A'}\cX _{A'}\rightarrow \cX _{A'}^{(2)}$ we get a morphism $\cX _{A'}\rightarrow \cX _{A'}\times _{A'}\cX _{A'}$ whose projection to the first factor is the identity and whose projection to the second factor is an automorphism $\alpha _{A'}\colon \cX _{A'}\rightarrow \cX _{A'}$ lifting $\alpha _A$ and such that $K|_{\cX _{A'}}\simeq (\gamma _{\alpha _{A'}*}\mls O_{ X_{A'}})^{(1)}$.  Furthermore, the construction shows that $\alpha _{A'}$ is unique.
\end{proof}

\subsection{Proof of \ref{T:theorem2}}
Note that statement (1) in \ref{T:theorem2} follows from statement (2) and \ref{T:theorem1}.
In light of \ref{P:keyprop} to prove statement (2) it suffices 
 to show that if $\alpha :\cX _S\rightarrow \cX _S$ is an automorphism of $\cX _S$ for a $k$-scheme $S$ with associated complex $(\gamma _{\alpha *}\mls O_{X_S})^{(1)}\in D(\cX _S^{(2)})(1)$ then we recover $\alpha $ uniquely from $(\gamma _{\alpha *}\mls O_{X_S})^{(1)}$.  Let $\bar \alpha :X_S\rightarrow X_S$ be the automorphism defined by $\alpha $ and let $\gamma _{\bar \alpha }:X_S\rightarrow (X\times X)_S$ be its graph.   The fiber product of the diagram
$$
\xymatrix{
& \cX ^{(2)}_S\ar[d]\\
X_S\ar[r]^-{\gamma _{\bar \alpha }}& (X\times X)_S}
$$
is canonically isomorphic to the $\mathbf{G}_m$-gerbe $\cX ^{-1}\wedge \bar \alpha ^*\cX $.  We therefore get a commutative diagram
$$
\xymatrix{
X_S\ar@{=}[rd]\ar[r]^-{t_\alpha }& \cX ^{-1}_S\wedge \bar \alpha ^*\cX_S \ar[d]\ar[r]& \cX_S^{(2)}\ar[d]\\
& X_S\ar[r]^-{\gamma _{\bar \alpha }}& (X\times X)_S,}
$$
where the square is cartesian and $t_\alpha $ is the trivialization defined by $\alpha $.  From this we see that  $(\gamma _{\alpha *}\mls O_{X_S})^{(1)}$ is the pushforward of a unique $1$-twisted invertible sheaf on $\cX _S^{-1}\wedge \bar \alpha ^*\cX _S$, and this $1$-twisted sheaf defines the map $t_{\alpha }$ and therefore also the map $\alpha $. This discussion also implies that the scheme-theoretic support of $(\gamma _{\alpha *}\mls O_{X_S})^{(1)}$ is a closed substack of $\cX _S^{(2)}$ which is a $\mathbf{G}_m$-gerbe over the graph of $\bar \alpha $, and $(\gamma _{\alpha *}\mls O_{X_S})^{(1)}$ defines a $1$-twisted invertible sheaf on this gerbe defining $\alpha $.  Therefore we can recover $\alpha $ uniquely from $(\gamma _{\alpha *}\mls O_{X_S})^{(1)}$. This shows that for any scheme $S$ the functor $\cAut _{\cX }(S)\rightarrow \cR _{\cX }(S)$ sending $\alpha $ to $(\gamma _{\alpha *}\mls O_{X_S})^{(1)}$ is an equivalence of categories, proving \ref{T:theorem2}. \qed

For later use we also record the following observation:
\begin{lem} Let $K_1, K_2\in \mls R_{\cX }(T)$ be two objects over a scheme $T$.  Then the complex
$$
K_1*K_2:= \rR \text{pr}_{13*}(\rL p_{12}^*K_1\lotimes \rL p_{23}^*K_2)\in D((\cX \times \cX )_T)^{(-1,1)}
$$
is an object of $\mls R_{\cX }(T)$.
\end{lem}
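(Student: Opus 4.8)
The plan is to verify the two conditions defining $\mls R_{\cX}(T)$ in \ref{P:Rdef}: that $K_1 * K_2$ is a perfect complex on $(\cX \times \cX)_T$, and that for every geometric point $\bar s \to T$ the fiber $(K_1 * K_2)_{\bar s}$ is of the form $(\Gamma_{\sigma *}\mls O_{\cX_{\kappa(\bar s)}})_{(-1,1)}$ for an automorphism $\sigma$ of $\cX_{\kappa(\bar s)}$. The first thing I would record is that the formation of $K_1 * K_2$ commutes with arbitrary base change $T' \to T$. Indeed, the projection $\text{pr}_{13}$ off the middle factor of the triple product $\cX \times \cX \times \cX$ (notation as in \ref{P:2.6}) is a base change of the flat morphism $\cX \to \Sp k$, hence flat; so the relevant base-change square is Tor-independent and $\rR \text{pr}_{13*}$ commutes with derived pullback along $T' \to T$, while $\rL p_{12}^*$ and $\rL p_{23}^*$ commute with base change and one has a projection formula. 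Thus $(K_1 * K_2)|_{T'} \simeq (K_1|_{T'}) * (K_2|_{T'})$, so for the fiber condition we may assume $T = \Sp \kappa$ with $\kappa$ algebraically closed and $K_i = (\Gamma_{\sigma_i *}\mls O_{\cX})_{(-1,1)}$ for automorphisms $\sigma_i \in \cAut_{\cX}(\kappa)$.

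The main step is then to compute the convolution of two graph kernels over $\kappa$. By the analysis in the proof of \ref{T:theorem2}, $(\Gamma_{\sigma_i *}\mls O_{\cX})_{(-1,1)}$ is the pushforward of an invertible $(-1,1)$-twisted sheaf from a smooth closed substack $\mathcal{G}_i \subset \cX \times \cX$ of codimension $\dim X$, namely the preimage of the graph $\Gamma_{\bar\sigma_i} \subset X \times X$, which is a $\mathbf{G}_m^2$-gerbe over $\Gamma_{\bar\sigma_i} \cong X$. Inside $\cX \times \cX \times \cX$ the substacks $p_{12}^{-1}(\mathcal{G}_1)$ and $p_{23}^{-1}(\mathcal{G}_2)$ are smooth of codimension $\dim X$, being the preimages of $\Gamma_{\bar\sigma_1} \times X$ and $X \times \Gamma_{\bar\sigma_2}$ in $X^3$; a tangent-space computation shows they meet transversally, their intersection $\mathcal{Z}$ being the preimage of the smooth locus $\{(a,b,c) : a = \bar\sigma_1(b),\ b = \bar\sigma_2(c)\} \subset X^3$, a $\mathbf{G}_m^3$-gerbe over $X$. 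Transversality gives Tor-independence, so $\rL p_{12}^* K_1 \lotimes \rL p_{23}^* K_2$ is the underived pushforward of an invertible twisted sheaf $\mls M$ on $\mathcal{Z}$, of weight $(-1,0,1)$. Now the composite $\mathcal{Z} \hookrightarrow \cX \times \cX \times \cX \xrightarrow{\text{pr}_{13}} \cX \times \cX$ factors as $\mathcal{Z} \to \mathcal{Z}' \hookrightarrow \cX \times \cX$, where $\mathcal{Z}'$ is the preimage of the graph of $\bar\sigma_1 \circ \bar\sigma_2$ and $\mathcal{Z} \to \mathcal{Z}'$ is the $\mathbf{G}_m$-gerbe rigidifying the central $\mathbf{G}_m$ coming from the middle factor; since $\mls M$ has weight $0$ along that $\mathbf{G}_m$, composition of derived pushforwards gives that $K_1 * K_2$ is the pushforward to $\cX \times \cX$ of the descent of $\mls M$ along $\mathcal{Z} \to \mathcal{Z}'$, an invertible $(-1,1)$-twisted sheaf on $\mathcal{Z}'$. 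Again by the proof of \ref{T:theorem2}, such data is precisely $(\Gamma_{\sigma *}\mls O_{\cX})_{(-1,1)}$ for an automorphism $\sigma$ of $\cX$ lifting $\bar\sigma_1 \circ \bar\sigma_2$. (As a consistency check, $\Phi^{K_1 * K_2} = \Phi^{K_2} \circ \Phi^{K_1} = \sigma_2^* \circ \sigma_1^* = (\sigma_1 \circ \sigma_2)^*$ by \ref{P:2.6} together with $\Phi^{(\Gamma_{\sigma *}\mls O_{\cX})_{(-1,1)}} = \sigma^*$, so $\sigma = \sigma_1 \circ \sigma_2$.)

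It remains to see that $K_1 * K_2$ is perfect over a general base $T$. The complex $\rL p_{12}^* K_1 \lotimes \rL p_{23}^* K_2$ is perfect on $(\cX \times \cX \times \cX)_T$, and being of weight $0$ along the middle factor it descends to a perfect complex on $(\cX \times X \times \cX)_T$; the projection of the latter onto $(\cX \times \cX)_T$ is proper, flat and of finite presentation, so its derived pushforward preserves perfection, whence $K_1 * K_2$ is perfect. (Alternatively, $K_1 * K_2$ is pseudo-coherent with geometric fibers of uniformly bounded Tor-amplitude by the computation above, hence perfect.) Combined with the fiber computation, this shows $K_1 * K_2 \in \mls R_{\cX}(T)$.

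I expect the main obstacle to be the transversality computation over $\kappa$ together with the bookkeeping of the $\mathbf{G}_m^2$- and $\mathbf{G}_m^3$-weights through the pullbacks, the derived tensor product, and the pushforward, and the identification of the output with a kernel of the form $(\Gamma_{\sigma *}\mls O_{\cX})_{(-1,1)}$. The base-change and perfection statements are routine; the only real subtlety is the non-properness of the $\mathbf{G}_m$-gerbes, which is harmless here because only weight-$0$ directions are ever pushed forward.
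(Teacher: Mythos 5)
Your proof is correct and follows the same route as the paper: the paper's proof simply reduces to the case where $T$ is the spectrum of an algebraically closed field (exactly your base-change step) and declares the identification of the convolution of two graph kernels with the graph kernel of the composite automorphism "immediate." Your transversality/weight computation and the perfection argument over a general $T$ just spell out the details that the paper leaves implicit.
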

\begin{proof}
It suffices to show this in the case when $T$ is the spectrum of an algebraically closed field, where the result is immediate.
\end{proof}

\begin{rem} The product $K_1*K_2$ is called the \emph{convolution product} of $K_1$ and $K_2$.  It corresponds to composition of autoequivalences $D(X, \alpha )\rightarrow D(X, \alpha )$.  Furthermore, the map $\cAut _{\cX }\rightarrow \cR _{\cX }$ takes composition of autoequivalences to the convolution product of kernels, as follows immediately from the definition.
\end{rem}

\section{The action of $\cR _{\cX }$ on $D(\cX )$}

\begin{prop}\label{P:4.1} Let $\mls M$ be an invertible sheaf on $X$ with associated automorphism $\alpha _{\mls M}\colon \cX \rightarrow \cX$.  Then the induced functor $\alpha _{\mls M}^*\colon D(\cX)\rightarrow D(\cX)$ is given by the functors $\otimes \mls M^{\otimes i}\colon D(\cX )^{(i)}\rightarrow D(\cX )^{(i)}$.
\end{prop}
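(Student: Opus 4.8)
The plan is to make the construction of $\alpha _{\mls M}$ completely explicit in terms of the line bundle $\mc K$ on $\cX \times _X\cX $ appearing in the proof of \ref{T:theorem1}, and then to obtain $\alpha _{\mls M}^*$ by pulling back a tautological comparison isomorphism that lives on $\cX \times _X\cX $. Write $\text{pr}_1,\text{pr}_2\colon \cX \times _X\cX \to \cX $ for the two projections and $\mc K=\underline{\text{Isom}}(\text{pr}_1,\text{pr}_2)$ for the $(-1,1)$-twisted line bundle on $\cX \times _X\cX $. Recall that $\alpha _{\mls M}$ is the image of $\mls M$ under the homomorphism $\cPic _{X/k}\hookrightarrow \cAut _{\cX }$ obtained by taking $\sigma =\text{id}$ in the proof of \ref{T:theorem1}~(2); unwinding that construction together with \ref{L:0}, this amounts to the statement that the graph $h:=(\text{id}_{\cX },\alpha _{\mls M})\colon \cX \to \cX \times _X\cX $, which is a section of $\text{pr}_1$, satisfies
$$
h^*\mc K\;=\;\underline{\text{Isom}}(\text{id}_{\cX },\alpha _{\mls M})\;\simeq\;\pi ^*\mls M ,
$$
where $\pi \colon \cX \to X$ is the gerbe structure map. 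Moreover $\alpha _{\mls M}$ lies over $\text{id}_X$ and induces the identity on the stabilizer $\mathbf{G}_m$, so $\alpha _{\mls M}^*$ carries each subcategory $D(\cX )^{(i)}$ into itself (visible from the characterization of $D(\cX )^{(i)}$ via the $\mathbf{G}_m$-action on fibers recalled in the first section); since $D(\cX )=\bigoplus _i D(\cX )^{(i)}$ it suffices to identify $\alpha _{\mls M}^*$ on each $D(\cX )^{(i)}$.

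The key step is the construction, for $F\in D(\cX )^{(i)}$, of a canonical isomorphism
$$
\text{pr}_2^*F\;\simeq\;\text{pr}_1^*F\otimes \mc K^{\otimes i}
$$
in $D(\cX \times _X\cX )^{(0,i)}$; both sides are $(0,i)$-twisted since $\text{pr}_1^*F$ is $(i,0)$-twisted, $\mc K$ is $(-1,1)$-twisted, and $(i,0)+i\cdot(-1,1)=(0,i)$, which is also the bitwist of $\text{pr}_2^*F$. By naturality it is enough to build this when $F$ is a coherent sheaf. The projections $\text{pr}_1,\text{pr}_2$ are two objects of the gerbe $\cX $ over the base $\cX \times _X\cX $, hence fppf locally on $\cX \times _X\cX $ there is an isomorphism $\phi $ between them, and $F(\phi )$ gives a local isomorphism $\text{pr}_1^*F\simeq \text{pr}_2^*F$; replacing $\phi $ by $\lambda \phi $ with $\lambda \in \mathbf{G}_m$ multiplies $F(\phi )$ by $\lambda ^i$ (as $F$ is $i$-twisted) and multiplies the associated trivialization of $\mc K=\underline{\text{Isom}}(\text{pr}_1,\text{pr}_2)$ by $\lambda $. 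Thus the local isomorphisms $F(\phi )$ transform by precisely the transition cocycle of $\mc K^{\otimes i}$, so they glue to the desired canonical global isomorphism $\text{pr}_2^*F\simeq \text{pr}_1^*F\otimes \mc K^{\otimes i}$, functorial in $F$.

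It then remains to pull this back along $h=(\text{id}_{\cX },\alpha _{\mls M})$. Using $\text{pr}_1\circ h=\text{id}_{\cX }$, $\text{pr}_2\circ h=\alpha _{\mls M}$ and $h^*\mc K\simeq \pi ^*\mls M$, for every $F\in D(\cX )^{(i)}$ one gets a functorial isomorphism
$$
\alpha _{\mls M}^*F\;=\;h^*\text{pr}_2^*F\;\simeq\;h^*\bigl(\text{pr}_1^*F\otimes \mc K^{\otimes i}\bigr)\;=\;F\otimes \pi ^*\mls M^{\otimes i}.
$$
Assembling these over all $i$ and using $D(\cX )=\bigoplus _i D(\cX )^{(i)}$ identifies $\alpha _{\mls M}^*$ with the functor whose restriction to $D(\cX )^{(i)}$ is $\otimes \pi ^*\mls M^{\otimes i}$, i.e.\ the functor denoted $\otimes \mls M^{\otimes i}$ in the statement; this is the assertion.

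I expect the only real difficulty to be bookkeeping rather than mathematics: one must (i) track the sign conventions in the definitions of $\mc K$ and of the isomorphism in \ref{L:0} carefully enough to be sure that $\alpha _{\mls M}$ is characterized by $h^*\mc K\simeq \pi ^*\mls M$ and not by $\pi ^*\mls M^{-1}$, and (ii) make the ``tautological'' isomorphism of the second paragraph precise, i.e.\ verify that passing from $\phi $ to $\lambda \phi $ scales $F(\phi )$ by $\lambda ^i$ and the trivialization of $\mc K$ by $\lambda $, so that the local isomorphisms really do glue after twisting by $\mc K^{\otimes i}$. As a sanity check one can instead argue fppf locally on $X$, where $\cX $ becomes the trivial gerbe $B\mathbf{G}_{m,X}$ and the explicit description of its automorphisms given in the first section yields $\alpha _{\mls M}^*=\otimes \mls M^{\otimes i}$ on $D(B\mathbf{G}_{m,X})^{(i)}$ at once; but reassembling those local identifications into a global one requires exactly the cocycle computation of (ii).
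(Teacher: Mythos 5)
Your argument is correct, and it takes a genuinely different route from the paper. The paper proves the proposition by a hands-on \v{C}ech computation: it fixes a cover of $X$ trivializing $\mls M$ with transition units $u_{ij}$, observes that $\alpha_{\mls M}$ is described by the twisted gluing data $u_{ij}\cdot\sigma^{ij}$, and then checks that the descent data of $\alpha_{\mls M}^*\mc F$ for an $r$-twisted sheaf $\mc F$ becomes $u_{ij}^{r}\cdot\lambda_{ij}$, i.e.\ the descent data of $\mc F\otimes\mls M^{\otimes r}$. You instead work universally on $\cX\times_X\cX$ with the Isom line bundle $\mc K$, characterize $\alpha_{\mls M}$ by $h^*\mc K\simeq\pi^*\mls M$ for the graph $h=(\mathrm{id},\alpha_{\mls M})$, and pull back the canonical comparison $\text{pr}_2^*F\simeq\text{pr}_1^*F\otimes\mc K^{\otimes i}$; this is cover-free, works in families, and reuses the structure (Lemma \ref{L:0} and the construction of $\cPic_{X/k}\hookrightarrow\cAut_{\cX}$) already in place, whereas the paper's computation is more elementary but choice-dependent. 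Note that your two flagged ``bookkeeping'' items are exactly where the paper's proof lives: the verification that rescaling a local $2$-isomorphism $\phi$ by $\lambda$ rescales $F(\phi)$ by $\lambda^{i}$ and the trivialization of $\mc K$ by $\lambda$ is the same cocycle computation the paper performs with the $u_{ij}$, and the sign convention $h^*\mc K\simeq\pi^*\mls M$ (rather than $\pi^*\mls M^{-1}$) is consistent with the paper's normalization, as one can confirm against the explicit description of automorphisms of $B\mathbf{G}_{m,X}$ in the introductory example. One small point worth making precise: since isomorphisms between objects of a derived category do not in general glue from a cover, you should construct the comparison $\text{pr}_2^*(-)\simeq\text{pr}_1^*(-)\otimes\mc K^{\otimes i}$ as an isomorphism of exact functors on the abelian category of $i$-twisted quasi-coherent sheaves (where gluing of sheaf isomorphisms is harmless, and where both functors are exact because $\text{pr}_1,\text{pr}_2$ are flat and $\mc K$ is invertible), and then pass to derived categories; this makes your reduction ``it is enough to treat coherent sheaves'' rigorous.
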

\begin{proof}
Fix a covering $X = \cup _iU_i$ over which we have trivializations $\sigma _i\colon \mls M|_{U_i}\rightarrow \mls O_{U_i}$ so the line bundle $\mls M$ is described by units $u_{ij}\in \Gamma (U_{ij}, \mls O_{U_{ij}}^*)$, where $U_{ij}:= U_i\cap U_j$, satisfying the cocycle condition on triple overlaps.  Let $x_i\colon \cX _{U_i}\rightarrow \cX$ be the projection.  The map $\alpha _{\mls M}\colon \cX \rightarrow \cX $ is  described by descent as follows.  The trivialization of $\mls M$ over $U_i$ identifies the composition $\alpha _{\mls M}\circ x_i$ with $x_i$ so the additional data specifying $\alpha _{\mls M}$ are isomorphisms
$$
\sigma _{\mls M}^{ij}\colon x_i|_{\cX _{U_{ij}}}\rightarrow x_j|_{\cX _{U_{ij}}}
$$
satisfying the cocycle condition on triple overlaps.  If $\sigma ^{ij}$ denotes the tautological isomorphism then it follows from the construction of $\alpha _{\mls M}$ that $\sigma _{\mls M}^{ij}$ is given by $u_{ij}\cdot \sigma ^{ij}$ (viewing $u_{ij}$ as an automorphism of $x_j|_{\cX _{U_{ij}}}$).

From this it follows that if $\mc F$ is a coherent sheaf on $\cX$ then $\alpha _{\mls m}^*\mc F$ is the coherent sheaf on $\cX$ whose pullback to $\cX _{U_i}$ is the restriction $\mc F_i$ of $\mc F$ to $\cX _{U_i}$ but whose descent data is given by composing the tautological desecnt data $\lambda _{ij}\colon \mc F_i|_{\cX _{U_{ij}}}\rightarrow \mc F_j|_{\cX _{U_{ij}}}$ with the automorphism of $\mc F_j|_{\cX _{U_{ij}}}$ given by the automorphism $u_{ij}$ of $x_j|_{\cX _{U_{ij}}}$.  In particular, if $\mc F$ is $r$-twisted for some $r$ then the descent data is given by $u_{ij}^r\cdot \lambda _{ij}$, which is exactly the descent data for $\mc F\otimes \mls M^{\otimes r}$.  From this the result follows.
\end{proof}

\begin{cor}\label{C:4.2}  Let $\Delta _{\mls M}\colon \cX \rightarrow \cX \times \cX $ be the graph $\text{\rm id}\times \alpha _{\mls M}$ of $\alpha _{\mls M}$.  Then  $\Delta _{\mls M*}\mls O_{\cX } = \oplus _{n\in \mathbf{Z}}((\Delta _*\mls O_{\cX })^{(-n, n)}\otimes \text{\rm pr}_2^*\mls M^{\otimes -n}).$
\end{cor}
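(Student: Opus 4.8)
The plan is to factor $\Delta_{\mls M}$ through the ordinary diagonal of $\cX$ and bootstrap from Proposition \ref{P:4.1}. Write $\Delta_{\cX}\colon \cX\to\cX\times\cX$ for the diagonal. Since $\alpha_{\mls M}$ comes from the embedding $\cPic _{X/k}\hookrightarrow\cAut _{\cX}$ (the case $\sigma=\text{id}$ of the action of $\cPic _{X/k}$ on $\cAut _{\cX}$), it induces the identity automorphism of $X$; hence the graph $\Delta_{\mls M}=(\text{id},\alpha_{\mls M})$ factors as $\Delta_{\mls M}=(\text{id}\times\alpha_{\mls M})\circ\Delta_{\cX}$. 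As $\Delta_{\mls M}$ and $\Delta_{\cX}$ are affine, taking pushforwards gives an identity of quasi-coherent sheaves
$$
\Delta_{\mls M*}\mls O_{\cX}=(\text{id}\times\alpha_{\mls M})_*\,\Delta_{\cX*}\mls O_{\cX},
$$
so it remains to compute $\Delta_{\cX*}\mls O_{\cX}$ and the effect on it of pushing forward along $\text{id}\times\alpha_{\mls M}$.

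First I would pin down the weight decomposition of $\Delta_{\cX*}\mls O_{\cX}$. Recall from the construction of $\delta\colon X\to\cX^{(2)}$ in the proof of Theorem \ref{T:theorem2} (the case $\alpha=\text{id}$ of the morphisms $\gamma_\alpha$) that the square with top row $\Delta_{\cX}$, right column $q\colon\cX\times\cX\to\cX^{(2)}$, bottom row $\delta$, and left column the gerbe projection $\cX\to X$ is cartesian. Since $q$ is flat (it is a pushout of a gerbe along a surjection of bands, in fact smooth), flat base change yields $\Delta_{\cX*}\mls O_{\cX}=q^*\delta_*\mls O_X$. Decomposing $\delta_*\mls O_X=\bigoplus_{n\in\mathbf{Z}}(\delta_*\mls O_X)^{(n)}$ by $\mathbf{G}_m$-weight on the gerbe $\cX^{(2)}$, and using that $q^*$ carries $D(\cX^{(2)})^{(n)}$ into $D(\cX\times\cX)^{(-n,n)}$, we obtain
$$
\Delta_{\cX*}\mls O_{\cX}=\bigoplus_{n\in\mathbf{Z}}q^*\big((\delta_*\mls O_X)^{(n)}\big),\qquad q^*\big((\delta_*\mls O_X)^{(n)}\big)=(\Delta_{\cX*}\mls O_{\cX})^{(-n,n)},
$$
the second identity because the summands have pairwise distinct bidegrees. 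In particular $\Delta_{\cX*}\mls O_{\cX}$ has nonzero $(a,b)$-twisted part only for $(a,b)$ of the form $(-n,n)$.

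It remains to apply $(\text{id}\times\alpha_{\mls M})_*$. Because $\alpha_{\mls M}$ is an automorphism and $\mls M\mapsto\alpha_{\mls M}$ is a homomorphism $\Pic _{X/k}\to\Aut _{\cX}$, we have $(\text{id}\times\alpha_{\mls M})_*=(\text{id}\times\alpha_{\mls M}^{-1})^*=(\text{id}\times\alpha_{\mls M^{-1}})^*$. The automorphism $\text{id}\times\alpha_{\mls M^{-1}}$ of the $\mathbf{G}_m^2$-gerbe $\cX\times\cX$ over $X\times X$ lies over the identity of $X\times X$ and is precisely the automorphism attached, in the second $\mathbf{G}_m$-factor, to the line bundle $\text{pr}_2^*\mls M^{-1}$. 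The local descent computation in the proof of Proposition \ref{P:4.1} applies verbatim to each $\mathbf{G}_m$-factor of a $\mathbf{G}_m^2$-gerbe, and shows that $(\text{id}\times\alpha_{\mls M^{-1}})^*$ sends an $(a,b)$-twisted complex $F$ to $F\otimes\text{pr}_2^*(\mls M^{-1})^{\otimes b}=F\otimes\text{pr}_2^*\mls M^{\otimes -b}$. Applying this to the summand $(\Delta_{\cX*}\mls O_{\cX})^{(-n,n)}$ (so $b=n$) and summing over $n$ gives
$$
\Delta_{\mls M*}\mls O_{\cX}=\bigoplus_{n\in\mathbf{Z}}(\Delta_*\mls O_{\cX})^{(-n,n)}\otimes\text{pr}_2^*\mls M^{\otimes -n},
$$
as claimed.

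The step I expect to require the most care is the last one: transporting the descent-theoretic computation of Proposition \ref{P:4.1} to the $\mathbf{G}_m^2$-gerbe $\cX\times\cX$ while keeping straight which $\mathbf{G}_m$-factor contributes which power of $\mls M$ and with which sign --- equivalently, correctly identifying $\text{id}\times\alpha_{\mls M^{-1}}$ with the automorphism associated to the pair $(\mls O_{X\times X},\text{pr}_2^*\mls M^{-1})$ in the sense of Proposition \ref{P:4.1}. Everything else is formal manipulation with flat base change and weight decompositions.
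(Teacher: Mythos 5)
Your argument is essentially the paper's own proof: the factorization $\Delta_{\mls M}=(\text{id}\times\alpha_{\mls M})\circ\Delta$ is exactly the paper's commutative square with vertical map $\text{id}\times\alpha_{\mls M^{-1}}$, and the final step is the same application of Proposition \ref{P:4.1} factor-by-factor. Your extra justification of the $(-n,n)$-weight decomposition of $\Delta_*\mls O_{\cX}$ via the cartesian square over $\delta$ and flat base change along $q$ is correct detail that the paper leaves implicit, not a different route.
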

\begin{proof}
This follows from \ref{P:4.1} and noting that we have a commutative square 
$$
\xymatrix{
\cX \ar[r]^-{\Delta _{\mls M}}\ar[d]_-{\text{id}}& \cX \times \cX \ar[d]^-{\text{id}\times \alpha _{\mls M^{-1}}}\\
\cX \ar[r]^-{\Delta }& \cX \times \cX ,}
$$
with the vertical maps isomorphisms.
\end{proof}

\begin{cor} Let $\mc M$ be a line bundle on $\cX$ and let $\Delta _{\mc M}\colon \cX \rightarrow \cX \times \cX $ be the graph of the induced automorphism.  Then the $(-1,1)$-twisted sheaf $(\Delta _{\mc M*}\mls O_{\cX })^{(-1,1)}\in D(\cX \times \cX )^{(-1,1)}$ is the pullback under $q^*$ of $(\delta _*\mc M^{-1})^{(1)}\in D(\cX ^{(2)})^{(1)}$.
\end{cor}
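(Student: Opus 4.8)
The plan is to deduce this from Corollary~\ref{C:4.2}, from the base‑change compatibility already built into the construction of $\cX^{(2)}$, and from the projection formula; no new geometry is required. Here $\mc M$ is to be understood as a line bundle on $X$ — equivalently, a $0$‑twisted line bundle on $\cX$ — so that $\alpha_{\mc M}$ is the automorphism of \ref{P:4.1} and $\delta_*\mc M^{-1}$ makes sense, and its graph $\Delta_{\mc M}=(\id,\alpha_{\mc M})\colon\cX\to\cX\times\cX$ is the object of \ref{C:4.2}. I will write $\Delta=\Gamma_{\id}\colon\cX\to\cX\times\cX$ for the diagonal, $\pi\colon\cX\to X$ for the gerbe, $\text{pr}_2\colon\cX\times\cX\to X$ for the composite of the second projection $\cX\times\cX\to\cX$ with $\pi$ (so that $\text{pr}_2^*\mc M^{\otimes -n}$ is the sheaf appearing in \ref{C:4.2}), and $p_2\colon\cX^{(2)}\to X$ for the composite of $\cX^{(2)}\to X\times X$ with the second projection. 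Since $q$ is a morphism over $X\times X$ one has $\text{pr}_2=p_2\circ q$, and since $\delta$ lies over the diagonal $X\to X\times X$ one has $p_2\circ\delta=\id_X$.

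First I would read off from Corollary~\ref{C:4.2} the bidegree‑$(-1,1)$ summand: as $\text{pr}_2^*\mc M^{\otimes -n}$ is $0$‑twisted, tensoring with it respects the bigrading, so the only term of $\Delta_{\mc M*}\mls O_{\cX}=\bigoplus_{n}\big((\Delta_*\mls O_{\cX})^{(-n,n)}\otimes\text{pr}_2^*\mc M^{\otimes -n}\big)$ of bidegree $(-1,1)$ is the $n=1$ term, so that
$$
(\Delta_{\mc M*}\mls O_{\cX})^{(-1,1)}=(\Delta_*\mls O_{\cX})^{(-1,1)}\otimes\text{pr}_2^*\mc M^{-1}.
$$
Next I would substitute $(\Delta_*\mls O_{\cX})^{(-1,1)}=q^*(\delta_*\mls O_X)^{(1)}$; this is the case $\alpha=\id$ of the compatibility recorded during the proof of Theorem~\ref{T:theorem2} (the equivalence $q^*$ carries $(\gamma_{\alpha*}\mls O_X)^{(1)}$ to $(\Gamma_{\alpha*}\mls O_{\cX})^{(-1,1)}$, being flat base change along the cartesian square relating $\Gamma_\alpha$, $\gamma_\alpha$ and $q$). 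Writing $\text{pr}_2^*\mc M^{-1}=q^*p_2^*\mc M^{-1}$ and using that $q^*$ is symmetric monoidal, the display becomes $q^*\big((\delta_*\mls O_X)^{(1)}\otimes p_2^*\mc M^{-1}\big)$. Finally I would move the twist inside $\delta_*$: from $p_2\circ\delta=\id_X$ and the projection formula, $\delta_*\mls O_X\otimes p_2^*\mc M^{-1}=\delta_*\mc M^{-1}$, and since $p_2^*\mc M^{-1}$ is $0$‑twisted this is compatible with passage to $1$‑twisted parts, so $(\delta_*\mls O_X)^{(1)}\otimes p_2^*\mc M^{-1}=(\delta_*\mc M^{-1})^{(1)}$. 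Concatenating the three identities gives $(\Delta_{\mc M*}\mls O_{\cX})^{(-1,1)}=q^*(\delta_*\mc M^{-1})^{(1)}$, as claimed.

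The step that will take the most care is bookkeeping rather than geometry: one must check that each functor used — $q^*$, $\text{pr}_2^*$, and the pushforward $\delta_*$ appearing in the projection formula — is exact, so that no derived corrections enter these underived identities, and that it commutes with the formation of a twisted summand, and one must keep the grading conventions aligned so that $q^*$ does carry the $1$‑twisted part on $\cX^{(2)}$ to the $(-1,1)$‑twisted part on $\cX\times\cX$. Exactness of $q^*$ and $\text{pr}_2^*$ is flatness; exactness of $\delta_*$ follows because $\delta$ is the composite of the closed immersion $\cX^{(2)}\times_{X\times X}X\hookrightarrow\cX^{(2)}$ with a (representable, affine) section of the trivial $\mathbf{G}_m$‑gerbe $\cX^{(2)}\times_{X\times X}X\to X$. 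I do not anticipate any obstacle beyond assembling these routine verifications in the right order.
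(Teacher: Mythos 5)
Your proof is correct and takes essentially the same route as the paper, whose proof is simply ``This follows from \ref{C:4.2}'': your three steps (extracting the $n=1$ summand of the decomposition in \ref{C:4.2}, identifying $(\Delta _*\mls O_{\cX })^{(-1,1)}$ with $q^*(\delta _*\mls O_X)^{(1)}$ via the cartesian square relating $\Gamma _\alpha $, $\gamma _\alpha $ and $q$, and the projection formula along $\delta $ using $p_2\circ \delta =\mathrm{id}_X$) are exactly the bookkeeping that citation is meant to encode. Your reading of $\mc M$ as a line bundle coming from $X$ (i.e.\ $0$-twisted on $\cX $) is also the intended one, since otherwise neither $\alpha _{\mc M}$ nor $\delta _*\mc M^{-1}$ would make sense.
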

\begin{proof}
This follows from \ref{C:4.2}.
\end{proof}

\subsection{The action on $D(X, \alpha )$.}

The group stack $\cAut _{\cX }$ acts on the triangulated category $D(X, \alpha )$.  It is unclear to us if this action is faithful.  However, we have the following result:

\begin{prop}\label{P:faithful} Assume that $k$ is algebraically closed, and let $r\geq 1$ be an integer such that $\cX $ admits a $1$-twisted vector bundle $\mls E$ of rank $r$. Let $\sigma \in \cAut _{\cX }(k)$ be an automorphism of the stack inducing the identity functor on $D(X, \alpha )$.  Then $\sigma $ maps to $\text{\rm Pic}_X[r]\subset \mathbf{R}_{\cX }$.
\end{prop}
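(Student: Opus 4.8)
The plan is to reduce everything to Proposition~\ref{P:4.1}: first I show that the automorphism of $X$ underlying $\sigma$ is the identity, so $\sigma$ comes from a line bundle $\mls M$ on $X$, and then I extract the relation $\mls M^{\otimes r}\simeq\mls O_X$ from the resulting isomorphism $\mls E\otimes\mls M\simeq\mls E$ by taking determinants. For the first point, since $k$ is algebraically closed, for each closed point $x\in X$ the residue gerbe $\cX_x=\cX\times_X\Sp\kappa(x)$ is a $\mathbf{G}_m$-gerbe over a field, hence trivial (as that field has trivial Brauer group), and so carries a rank one $1$-twisted sheaf; its pushforward along the closed immersion $\cX_x\hookrightarrow\cX$ is a nonzero object $k_x\in D(X,\alpha)$ with support $\{x\}$. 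Since $\sigma^*$ is isomorphic to the identity functor we have $k_x\simeq\sigma^*k_x$, and the support of $\sigma^*k_x$ is $\bar\sigma^{-1}(x)$; hence $\bar\sigma$ fixes every closed point of $X$. As $X$ is reduced, separated and of finite type over $k$, the equalizer of $\bar\sigma$ and $\id_X$ is a closed subscheme of $X$ containing all closed points, hence is all of $X$, so $\bar\sigma=\id_X$. Therefore $\sigma$ lies in the kernel of $\Aut_\cX\to\Aut_X$, which by the remark following the proof of Theorem~\ref{T:theorem1}(2) is the image of $\Pic_{X/k}\hookrightarrow\Aut_\cX$; thus $\sigma=\alpha_{\mls M}$ for a line bundle $\mls M$ on $X$, well-defined as a class in $\Pic_X(k)$, and it remains to prove $\mls M^{\otimes r}\simeq\mls O_X$.

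For the second point, Proposition~\ref{P:4.1} identifies $\sigma^*=\alpha_{\mls M}^*$ on $D(\cX)^{(1)}=D(X,\alpha)$ with $-\otimes\mls M$. Applying the hypothesis that this functor is the identity to the given $1$-twisted vector bundle $\mls E$ yields an isomorphism $\mls E\otimes\mls M\simeq\mls E$ of $1$-twisted vector bundles of rank $r$. Taking top exterior powers and using $\det(\mls E\otimes\mls M)\simeq(\det\mls E)\otimes\mls M^{\otimes r}$, together with the fact that $\det\mls E$ is an invertible ($r$-twisted) sheaf on $\cX$, I cancel $\det\mls E$ to get $\mls M^{\otimes r}\simeq\mls O_\cX$; pushing down to $X$ (using $\pi_*\mls O_\cX=\mls O_X$ and the projection formula) gives $\mls M^{\otimes r}\simeq\mls O_X$. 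Hence the image of $\sigma$ in $\mathbf{R}_{\cX}$ lies in $\Pic_X[r]$, as claimed.

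Rather than a genuine obstacle, the steps requiring care are: the existence of the rank one $1$-twisted skyscrapers $k_x$, which is exactly where algebraic closedness of $k$ enters; the identification of $\ker(\Aut_\cX\to\Aut_X)$ with $\Pic_{X/k}$, so that $\sigma$ really is of the form $\alpha_{\mls M}$; and the bookkeeping of twists in the determinant computation — one must observe that $\mls M$ is $0$-twisted while $\det\mls E$ is $r$-twisted, so that $(\det\mls E)^{\vee}\otimes\det(\mls E\otimes\mls M)$ is an honest line bundle pulled back from $X$. Everything else is a formal consequence of Proposition~\ref{P:4.1}.
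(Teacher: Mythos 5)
Your proof is correct and follows essentially the same route as the paper: show $\bar\sigma=\id$ using $1$-twisted skyscraper sheaves and supports, identify $\sigma$ with tensoring by a line bundle $\mls M$ on $X$ via Proposition \ref{P:4.1}, and take determinants of $\mls E\otimes\mls M\simeq\mls E$ to conclude $\mls M^{\otimes r}\simeq\mls O_X$. The only cosmetic differences are that the paper builds its skyscrapers as $\mls E$ tensored with the structure sheaf of a closed point rather than from the trivial residue gerbe, and it first tensors with an auxiliary line bundle $\mls L$ before taking determinants, which your direct cancellation of the invertible $r$-twisted sheaf $\det\mls E$ makes unnecessary.
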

\begin{proof}
Note first that the automorphism $\bar \sigma :X\rightarrow X$ induced by $\sigma $ must be the identity. Indeed for a closed point $x\in X(k)$ let $\mls E_x$ be the $1$-twisted sheaf on $\cX$ given by tensoring $\mls E$ with the skyscraper sheaf at $x$.  Then $\sigma *\mls E_x\simeq \mls E_x$ (since $\sigma $ acts as the identity on $D(X, \alpha )$) and looking at supports we get that $\bar \sigma (x) = x$.  

It follows that $\sigma $ is given by tensoring with a line bundle $\mls M$ on $X$.  This line bundle furthermore has the property that for any other line bundle $\mls L$ on $X$ we have
$$
(\mls E\otimes \mls L)\otimes \mls M\simeq \mls E\otimes \mls L,
$$
since $\mls E\otimes \mls L\in D(X, \alpha )$ and $\sigma $ acts trivially on this category.  Taking determinants we find that $\mls L^{\otimes r}\otimes \mls M^{\otimes r}\simeq \mls L^{\otimes r}$, from which we conclude that $\mls M^{\otimes r}\simeq \mls O_X$.
\end{proof}

\begin{example}
    If one allows $X$ to be a Deligne-Mumford stack, and not just a scheme, then one can make an example of a gerbe $\mls X$ for which the action of $\cAut _{\mls X}$ on $D(X, \alpha )$ is not faithful as follows.
    Let $N>0$ be an integer and let $k$ be an algebraically closed field in which $N$ is invertible.  Let $A$ denote the group $\mu _N\times \mathbf{Z}/(N)$.  We view $\mathbf{Z}/(N)$ as the Cartier dual of $\mu _N$ and for $\zeta \in \mu _N$ and $a\in \mathbf{Z}/(N)$ we write $a(\zeta )$ for the value of $a$ on $\zeta $.  Consider the ``Heisenberg group'' $\mls G$, which as a scheme is  $\mathbf{G}_m\times A$ but with product given by 
    $$
    (u, (\zeta , a))*(u', (\zeta ', a')) = (uu'a(\zeta '), \zeta \zeta ', a+a').
    $$
The group scheme $\mls G$ is a central extension
$$
1\rightarrow \mathbf{G}_m\rightarrow \mls G\rightarrow A\rightarrow 1.
$$
Let $X$ denote $BA$ and let $\mls X\rightarrow X$ be the $\mathbf{G}_m$-gerbe given by $B\mls G\rightarrow BA$.  By standard representation theory (see for example \cite[Proposition 3]{MR204427}) there exists a unique irreducible representation $V$ of $\mls G$ on which $\mathbf{G}_m$ acts by scalar multiplication and any representation of $\mls G$ on which $\mathbf{G}_m$ acts by scalars is a direct sum of copies of $V$.  Furthermore, the endomorphism ring of $V$ (as a representation) is $k$.  If $\mls V$ denotes the corresponding sheaf on $B\mls G$ then it follows that the functor
$$
\text{Hom}(\mls V, -):(\text{$1$-twisted quasi-coherent sheaves on $B\mls G$})\rightarrow \text{Vec}_k
$$
is an equivalence of categories.  Deriving this equivalence we see that
$$
\text{RHom}(\mls V, -):D(BA, \alpha )\rightarrow D(\text{Vec}_k)
$$
is an equivalence of triangulated categories, where $\alpha \in H^2(BA, \mathbf{G}_m)$ is the class of $B\mls G$.

Let $\mls L$ be a line bundle on $BA$ with associated $1$-dimensional representation $L$ of $A$, which we view also as a representation of $\mls G$, and let $\sigma :D(BA, \alpha )\rightarrow D(BA, \alpha )$ be the autoequivalence given by tensoring with $\mls L$.  To show that $\sigma $ is isomorphic to the identity functor it suffices to show that the two functors
$$
\text{RHom}(\mls V, -), \ \ \text{RHom}(\mls V, -)\circ \sigma 
$$
are isomorphic.  Since $\text{RHom}(\mls V, -)\circ \sigma \simeq \text{RHom}(\mls V\otimes \mls L^{-1} , -)$, for this it suffices in turn to show that the two representations $V$ and $V\otimes L$ are isomorphic.  This follows from noting that they are both representations of $\mls G$ of the same rank on which $\mathbf{G}_m$ acts by multiplication by scalars.
\end{example}

\section{Proof of theorem \ref{T:theorem3}}\label{S:section7}

We use an argument we learned from Christian Schnell in the untwisted case.

\begin{pg}\label{P:7.1}
As in \ref{P:2.4} let $A\in D(\cY \times \cX )^{(-1,1)}$ denote the complex $K^\vee \otimes (\omega _Y|_{\cY\times \cX})[\text{\rm dim}_Y]$. Consider the complex $\rL p_{12}^*A\lotimes \rL p_{34}^*K\in D(\cY \times \cX \times \cX \times \cY ) ^{(-1,1,-1,1)}$.  Applying the isomorphism 
$$
\cY \times \cX \times \cX \times \cY \rightarrow \cX \times \cX \times \cY \times \cY , \ \ (y_1, x_1, x_2, y_2)\mapsto (x_1, x_2, y_1, y_2)
$$
and using the equivalence $D(\cX ^{(2)}\times \cY ^{(2)})^{(-1,1)}\simeq D(\cX \times \cX \times \cY \times \cY )^{(1,-1,-1,1)}$ we get an object $\Omega \in D(\cX ^{(2)}\times \cY ^{(2)})^{(-1,1)}$, which defines a functor
$$
\Psi :D(\cX ^{(2)})^{(1)}\rightarrow D(\cY ^{(2)})^{(1)}.
$$
\end{pg}
\begin{rem} If we view objects of $D(\cX ^{(2)})^{(1)}$ (resp. $D(\cY ^{(2)})^{(1)}$)  as defining endofunctors of $D(X, \alpha )$ (resp. $D(Y, \beta )$) then $\Psi $ sends an endofunctor $E$ to $\phi ^K\circ E\circ (\Phi ^{K})^{-1}$, as follows from the description of the kernel of a composition of Fourier-Mukai functors.
\end{rem}

\begin{pg}
To prove \ref{T:theorem3} we show that for a $k$-scheme $S$ and object $\Sigma \in \cR ^0_{\cX }(S)\subset D(\cX ^{(2)}_S)^{(1)}$ the object $\Psi _S(\Sigma )\in D(\cY ^{(2)})^{(1)}$ is in $\cR ^0_{\cY }$, and furthermore that the induced functor
$$
\Psi ^{\cR ^0}:\cR ^0_{\cX }\rightarrow \cR ^0_{\cY }
$$
is compatible with convolution.  
\end{pg}

\subsection{Compatibility with convolution.}

For this it is useful to generalize the operation of convolution slightly.  For three $\mathbf{G}_m$-gerbes $\cX _i$ over smooth projective varieties $X_i$ ($i=1,2,3$) and objects $A\in D(\cX _1\times \cX _2)^{(-1,1)}$ and $B\in D(\cX _2\times \cX _3)^{(-1,1)}$ let $A*B\in D(\cX _1\times \cX _3)^{(-1,1)}$ denote the complex
$$
B*A:= \rR \text{pr}_{13*}(\rL \text{pr}_{12}^*A\lotimes \rL \text{pr}_{23}^*B).
$$

\begin{lem}\label{L:7.5} For $A\in D(\cX _1\times \cX _2)^{(-1,1)}$, $B\in D(\cX _2\times \cX _3)^{(-1,1)}$, and $C\in D(\cX _3\times \cX _4)^{(-1,1)}$ we have
$$
C*(B*A) \simeq (C*B)*A
$$
in $D(\cX _1\times \cX _4)^{(-1,1)}$.
\end{lem}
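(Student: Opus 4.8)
The plan is to realize both $C*(B*A)$ and $(C*B)*A$ as the same ``fourfold convolution'' on the product $P:=\cX_1\times\cX_2\times\cX_3\times\cX_4$. Writing $p_{i_1\cdots i_r}\colon P\to \cX_{i_1}\times\cdots\times\cX_{i_r}$ for the projection onto the indicated factors, I claim both sides are canonically isomorphic to
$$
\rR p_{14*}\bigl(\rL p_{12}^*A\lotimes \rL p_{23}^*B\lotimes \rL p_{34}^*C\bigr)\in D(\cX_1\times\cX_4)^{(-1,1)};
$$
since this expression is symmetric under swapping the ordered pair $(\cX_1,\cX_2)$ with $(\cX_3,\cX_4)$, proving it for $C*(B*A)$ and running the same argument in reverse for $(C*B)*A$ gives the lemma.

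To identify $C*(B*A)$ with the displayed complex I would first unwind the outer convolution, $C*(B*A)=\rR q_{14*}(\rL q_{13}^*(B*A)\lotimes \rL q_{34}^*C)$, where the $q$'s are the projections from $\cX_1\times\cX_3\times\cX_4$, and then compute $\rL q_{13}^*(B*A)$ by flat base change along the cartesian square expressing $P$ as the fibre product of $\cX_1\times\cX_2\times\cX_3$ and $\cX_1\times\cX_3\times\cX_4$ over $\cX_1\times\cX_3$. All four maps in this square are smooth (each $X_i$ is smooth over $k$) and proper (each $X_i$ is projective), so flat base change applies and yields $\rL q_{13}^*(B*A)\simeq \rR p_{134*}(\rL p_{12}^*A\lotimes \rL p_{23}^*B)$. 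Substituting this back, the projection formula absorbs $\rL q_{34}^*C$ into the pushforward (using $q_{34}\circ p_{134}=p_{34}$), and compatibility of derived pushforward with composition (using $q_{14}\circ p_{134}=p_{14}$) collapses the two pushforwards into $\rR p_{14*}$, producing the fourfold convolution. For $(C*B)*A$ the computation is identical after relabelling, using instead the cartesian square built from $\cX_2\times\cX_3\times\cX_4$ and $\cX_1\times\cX_2\times\cX_4$ over $\cX_2\times\cX_4$.

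The one point that is not purely formal is checking that flat base change and the projection formula are legitimately available for these morphisms of $\mathbf{G}_m$-gerbes, with the twists tracked correctly; this is where I expect to need the most care. The key observation is that every complex appearing is $0$-twisted along exactly the factor over which the corresponding pushforward is taken --- for instance $\rL p_{12}^*A\lotimes \rL p_{23}^*B$ is $(-1,0,1,0)$-twisted on $P$, and $p_{134}$ forgets the $\cX_2$-direction, along which this complex is $0$-twisted --- so the relevant complex is pulled back from the partial rigidification in that direction, and one reduces to the standard base change and projection-formula statements for the proper flat schemes $X_i$, exactly as in the Remark following the definition of Fourier-Mukai functor. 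Since all the functors in sight commute with the ambient $\mathbf{G}_m$-actions, the character decompositions are preserved throughout and the ``$(-1,1)$''-part bookkeeping is automatic. Beyond this verification, the argument is a routine chain of manipulations with base change, the projection formula, and associativity of pushforward.
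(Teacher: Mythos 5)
Your proof is correct and follows the same route as the paper: the paper's proof simply asserts that expanding the definitions identifies both sides with the fourfold kernel $\rR \mathrm{pr}_{14*}(\rL \mathrm{pr}_{12}^*A\lotimes \rL \mathrm{pr}_{23}^*B\lotimes \rL \mathrm{pr}_{34}^*C)$ on $\cX_1\times\cX_2\times\cX_3\times\cX_4$, which is exactly what you establish. You merely make explicit the flat base change, projection formula, and twist bookkeeping that the paper leaves as routine.
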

\begin{proof}
    Indeed expanding out the definition of the convolution product one finds that both sides are calculated by
    $$
    \rR \text{pr}_{14*}(\rL \text{pr}_{12}^*A\lotimes \rL \text{pr}_{23}^*B \lotimes \rL \text{pr}_{34}^*C),
    $$
    where the pushforward is from $\cX _1\times \cX _2\times \cX _3\times \cX _4$.
\end{proof}

Returning to the notation of \ref{P:7.1}.
\begin{lem}\label{L:7.6}
\begin{enumerate}
    \item  $A*K\simeq (\Delta _{\cX *}\mls O_{\cX })^{(-1,1)}$ and $K*A\simeq (\Delta _{\cY*}\mls O_{\cY})^{(-1,1)}$.
    \item  For any $U\in D(\cX \times \cX )^{(-1,1)}$ we have $U*(\Delta _{\cX *}\mls O_{\cX })^{(-1,1)}\simeq (\Delta _{\cX *}\mls O_{\cX })^{(-1,1)}*U\simeq U$.
\end{enumerate}
\end{lem}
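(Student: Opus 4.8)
The plan is to establish part (2) first by a direct geometric computation, and then deduce part (1) from it together with the adjunction formalism of \S\ref{S:section2}; it is precisely part (2) that makes the formal manipulations in part (1) possible.

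For part (2), recall (from the construction of $\cX^{(2)}$ and $\delta\colon X\to \cX^{(2)}$ in the proof of Theorem~\ref{T:theorem2}) that $(\Delta_{\cX *}\mls O_{\cX})^{(-1,1)} = q^*(\delta_*\mls O_X)^{(1)}$, so this complex is supported on the $\mathbf{G}_m$-gerbe lying over the diagonal $X\hookrightarrow X\times X$, where it is an invertible $1$-twisted sheaf, and it is the kernel of $\mathrm{id}_{D(\cX)^{(1)}}$ (as noted in \ref{P:2.6}). To compute $U*(\Delta_{\cX *}\mls O_{\cX})^{(-1,1)}$ and $(\Delta_{\cX *}\mls O_{\cX})^{(-1,1)}*U$ one unwinds the definition of convolution: inside the triple product $\cX\times\cX\times\cX$ the complex $\rL\text{pr}_{23}^*(\Delta_{\cX *}\mls O_{\cX})^{(-1,1)}$ (resp.\ $\rL\text{pr}_{12}^*(\Delta_{\cX *}\mls O_{\cX})^{(-1,1)}$) is pushed forward from the locus where the last two (resp.\ first two) factors agree, and on that locus the remaining projection $\text{pr}_{13}$ is an isomorphism onto $\cX\times\cX$ carrying the restriction of the other pulled-back factor to $U$ and the leftover twist to the trivial one. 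Hence the projection formula and flat base change give $U*(\Delta_{\cX *}\mls O_{\cX})^{(-1,1)}\simeq U\simeq (\Delta_{\cX *}\mls O_{\cX})^{(-1,1)}*U$. I would carry this out by reducing fppf-locally on $X$ (as in the proof of \ref{L:1.1}) to the case $\cX = B\mathbf{G}_{m,X}$, where the statement is the classical fact that $\Delta_{X*}\mls O_X$ is a unit for convolution, twisted by the tautological character; this keeps the bookkeeping of the $(-1,1)$-gradings transparent. The same argument shows more generally that $(\Delta_{\cX *}\mls O_{\cX})^{(-1,1)}$ is a two-sided unit for the generalized convolution of \S\ref{S:section7}: for any $\mathbf{G}_m$-gerbes $\cZ_1,\cZ_2$ over smooth projective varieties and $V\in D(\cZ_1\times\cX)^{(-1,1)}$, $W\in D(\cX\times\cZ_2)^{(-1,1)}$ one has $V*(\Delta_{\cX *}\mls O_{\cX})^{(-1,1)}\simeq V$ and $(\Delta_{\cX *}\mls O_{\cX})^{(-1,1)}*W\simeq W$.

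For part (1), recall from \ref{P:2.4} that $\Phi^A$ is left adjoint to $\Phi^K$ and $\Phi^B$ is right adjoint to $\Phi^K$, and that when $\Phi^K$ is an equivalence both are inverse to $\Phi^K$, so both adjunctions are adjoint equivalences. Since convolution of kernels computes composition of the associated Fourier--Mukai functors, the co-unit of $\Phi^A\dashv\Phi^K$, namely $\Phi^A\Phi^K\to \mathrm{id}_{D(\cX)^{(1)}}$, is realized by the kernel-level map $A*K\to (\Delta_{\cX *}\mls O_{\cX})^{(-1,1)}$ of \ref{P:2.6}/\eqref{E:adjunctionmap}; by \ref{L:2.6} applied with $S=\mathrm{Spec}\,k$ this map is an isomorphism, so $A*K\simeq (\Delta_{\cX *}\mls O_{\cX})^{(-1,1)}$. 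Symmetrically, the co-unit of $\Phi^K\dashv\Phi^B$ gives $K*B\simeq (\Delta_{\cY *}\mls O_{\cY})^{(-1,1)}$ (the $B$-side version of \ref{L:2.6}, whose proof is the same trace-map argument). The remaining assertion $K*A\simeq (\Delta_{\cY *}\mls O_{\cY})^{(-1,1)}$ then follows purely formally: by associativity (\ref{L:7.5}), the two isomorphisms just obtained, and part (2),
$$
(K*A)*K \;\simeq\; K*(A*K)\;\simeq\; K*(\Delta_{\cX *}\mls O_{\cX})^{(-1,1)}\;\simeq\; K\;\simeq\;(\Delta_{\cY *}\mls O_{\cY})^{(-1,1)}*K,
$$
and convolving on the right with $B$ and using $K*B\simeq (\Delta_{\cY *}\mls O_{\cY})^{(-1,1)}$ together with part (2) once more,
$$
K*A \;\simeq\; (K*A)*(K*B)\;\simeq\; \big((K*A)*K\big)*B \;\simeq\; \big((\Delta_{\cY *}\mls O_{\cY})^{(-1,1)}*K\big)*B\;\simeq\;(\Delta_{\cY *}\mls O_{\cY})^{(-1,1)}.
$$

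The step I expect to require the most care is the passage, inside part (1), from the equality of Fourier--Mukai functors to an isomorphism of kernels: it is immediate that $\Phi^A\circ\Phi^K\simeq\mathrm{id}$ as functors, but upgrading this to $A*K\simeq (\Delta_{\cX *}\mls O_{\cX})^{(-1,1)}$ as objects is exactly what \ref{L:2.6} supplies (via the derived Nakayama lemma applied to the cone of the explicit adjunction map), and one must work with the \emph{co-units}, since the trace-type maps of \ref{P:2.6} realize precisely those; routing $K*A$ through the formal computation above sidesteps the need to realize the \emph{unit} of $\Phi^A\dashv\Phi^K$ at the kernel level. The only genuinely computational input is part (2), where one verifies that $\rL\text{pr}_{12}^*(\Delta_{\cX *}\mls O_{\cX})^{(-1,1)}$ is the structure sheaf of the appropriate closed substack with the correct twist; everything else is formal.
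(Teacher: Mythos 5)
Your argument is correct, and at its core it rests on the same two inputs as the paper's (one-line) proof: part (2) is the direct unwinding of the definition of convolution against the diagonal kernel, which is what the paper means by ``immediate from the definitions,'' and part (1) is extracted from the kernel-level realization of the adjunction maps in \ref{P:2.6}. Where you genuinely diverge is in the second half of (1): rather than treating $K*A\simeq (\Delta _{\cY *}\mls O_{\cY })^{(-1,1)}$ as the symmetric ``reformulation of \ref{P:2.6}'' (which, read literally, would require knowing that the two adjoint kernels $A$ and $B$ agree as objects once $\Phi ^K$ is an equivalence), you obtain it purely formally from $A*K\simeq (\Delta _{\cX *}\mls O_{\cX })^{(-1,1)}$, $K*B\simeq (\Delta _{\cY *}\mls O_{\cY })^{(-1,1)}$, associativity (\ref{L:7.5}) and the unit property (2). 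That detour buys something real: it sidesteps the identification $A\simeq B$ at the level of kernels (equivalently, the Serre-functor intertwining $K\lotimes p^*\omega _X[\dim X]\simeq K\lotimes q^*\omega _Y[\dim Y]$), which the paper leaves implicit.

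One caveat on the step ``by \ref{L:2.6} applied with $S=\operatorname{Spec} k$ this map is an isomorphism'': the \emph{statement} of \ref{L:2.6} only produces the open locus where the fiber defines an equivalence, so it does not by itself upgrade the functor-level isomorphism $\Phi ^A\circ \Phi ^K\simeq \mathrm{id}$ to an isomorphism of the explicit counit map $A*K\rightarrow (\Delta _{\cX *}\mls O_{\cX })^{(-1,1)}$ of \ref{P:2.6}. What you actually need is the standard fact that when $\Phi ^K$ is an equivalence this kernel map is an isomorphism (e.g.\ because its cone $C$ satisfies $\Phi ^C(F)\simeq 0$ for all $F$, in particular for twisted skyscrapers, forcing $C=0$); this is exactly the fact the paper's proof of \ref{L:2.6} relies on implicitly via ``the preceding discussion,'' so your argument is no less complete than the paper's, but the precise citation should be to that fact (or its proof) rather than to the statement of \ref{L:2.6}.
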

\begin{proof}
Statement (1) is a reformulation of the discussion in \ref{P:2.6}, and statement (2) is immediate from the definitions.
\end{proof}

With this notation and identifying $D(\cX ^{(2)})^{(1)}$ with $D(\cX \times \cX )^{(-1,1)}$ and similarly for $\cY $,  for $U\in D(\cX \times \cX )^{(-1,1)}$ we have $\Psi (U)$ equal to $K*U*A$, where the associativity of convolution \ref{L:7.5} is reflected in the notation.  For $U, V\in D(\cX \times \cX )^{(-1,1)}$ we then have
$$
\Psi (V)*\Psi (U) \simeq K*V*A*K*U*A\simeq K*V*U*A\simeq \Psi (V*U),
$$
where the middle isomorphism is by \ref{L:7.6}.  This is the sought-after compatibility with convolution.

\subsection{Completion of proof of \ref{T:theorem3}}

By \ref{L:7.6} we have $\Psi ((\Delta _{\cX *}\mls O_{\cX })^{(-1,1)})\simeq (\Delta _{\cY *}\mls O_{\cY })^{(-1,1)}$.   From this and \ref{P:keyprop} we find that there exists a maximal nonempty, and therefore dense, open subset $U\subset \mathbf{R}_{\cX }^0$ such that for every geometric point $\bar u\rightarrow U$ with corresponding object $E_{\bar u}\in D(\cX \times \cX )^{(-1,1)}$ we have $\Psi (E_{\bar u})\in \cR _{\cY }^0$.  Furthermore, the open subset $U$ is closed under convolution.  Since the map 
$$
U\times U\rightarrow \mathbf{R}_{\cX }^0, \ \ ([E], [F])\mapsto [E*F]
$$
is surjective (this is a general fact about connected group schemes) we conclude that $U = \mathbf{R}_{\cX }^0$ which implies \ref{T:theorem3}. \qed 

\section{Twisted derived category of abelian variety}\label{S:section8}

In the case of an abelian variety the collection of gerbes as well as twisted sheaves can be described quite concretely, as we explain in this section.

Let $k$ be an algebraically closed field and let $A/k$ be an abelian variety.

\begin{lem} Let $\mls X\rightarrow A$ be a $\mathbf{G}_m$-gerbe of order $e$. Then the pullback of $\mls X$ along the multiplication map $[e]:A\rightarrow A$ is trivial. 
 In particular, for any $\mathbf{G}_m$-gerbe $\mls X\rightarrow A$ there exists an isogeny $\tau :A'\rightarrow A$ such that $\tau ^*\mls X$ is a trivial gerbe over $A'$.
\end{lem}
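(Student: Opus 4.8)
The plan is to rephrase the statement in terms of the class $\alpha\in H^2(A,\mathbf{G}_m)$ of $\ms X$ and to show $[e]^*\alpha=0$. Since $H^2(A,\mathbf{G}_m)=\Br(A)$ is torsion (cf.\ Remark~\ref{R:2.3}), every $\mathbf{G}_m$-gerbe on $A$ has a finite order $e$ with $e\alpha=0$, and the second assertion then follows from the first by taking $A'=A$ and $\tau=[e]$, which is an isogeny. So it is enough to show that, for every $n\ge 1$, $[n]^*$ annihilates $H^2(A,\mathbf{G}_m)[n]$; in fact I would prove the cleaner statement that $[n]^*$ acts on $H^2(A,\mathbf{G}_m)$ as multiplication by $n^2$, which gives $[e]^*\alpha=e^2\alpha=0$ since $e\mid e^2$.

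The first step toward this is to reduce to finite coefficients. Because $H^2_\fppf(A,\mathbf{G}_m)=H^2_\et(A,\mathbf{G}_m)$ and the Kummer sequence $1\to\mu_n\to\mathbf{G}_m\xrightarrow{z\mapsto z^n}\mathbf{G}_m\to 1$ is exact in the fppf topology for all $n$, a class $\alpha$ with $n\alpha=0$ lifts to a class $\tilde\alpha\in H^2_\fppf(A,\mu_n)$, compatibly with $[n]^*$. Since $H^i_\fppf(A,\mu_n)$ is killed by $n$, it suffices to show that $[n]^*$ acts on $H^i_\fppf(A,\mu_n)$ by multiplication by $n^i$ (for then $[n]^*\tilde\alpha=n^2\tilde\alpha=0$, whence $[n]^*\alpha=0$). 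If $n$ is invertible in $k$ this is standard: $\mu_n$ is a locally constant étale sheaf, $H^*_\fppf(A,\mu_n)=H^*_\et(A,\mu_n)$, the ring $H^*_\et(A,\mathbf{Z}/n\mathbf{Z})$ is the exterior algebra over $H^1_\et(A,\mathbf{Z}/n\mathbf{Z})\cong(\mathbf{Z}/n\mathbf{Z})^{2\dim A}$, and $[n]$ induces multiplication by $n$ on $\pi_1^{\et}(A)$, hence on $H^1$, hence multiplication by $n^i$ on $H^i=\bigwedge^i H^1$.

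For general $n$ the same conclusion holds by the analogous description of the flat cohomology of $A$ with coefficients in a finite flat commutative group scheme, or, alternatively, directly via the theorem of the cube for $H^2(-,\mathbf{G}_m)$, which holds for the same reasons as for $\Pic=H^1(-,\mathbf{G}_m)$: it yields the relation $[n]^*\alpha=\tbinom{n+1}{2}\alpha+\tbinom{n}{2}[-1]^*\alpha$ in $H^2(A,\mathbf{G}_m)$ (using $[0]^*\alpha=0$, since $[0]$ factors through $\Spec k$, and $[1]^*\alpha=\alpha$), and $[-1]^*$ acts trivially on $H^2$ because it acts by $(-1)^{\deg}$ on the finite-coefficient cohomology of an abelian variety; together these give $[n]^*\alpha=n^2\alpha$. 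The one ingredient that is not formal is the behaviour of $[n]^*$ on the $p$-primary part when $p=\mathrm{char}(k)$ divides $n$, where the exterior-algebra description is unavailable for $H^*_\fppf(A,\mu_{p^m})$: there one must invoke the cube (biextension) formalism in the flat topology, or factor $[p]$ through the relative Frobenius and the Verschiebung and treat the two factors separately. In characteristic $0$, or whenever $p\nmid e$, the whole argument is the elementary computation of the preceding paragraph.
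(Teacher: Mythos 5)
Your overall skeleton agrees with the paper's: reduce to showing that $[n]^*$ kills the $n$-torsion of $\Br (A)$ (equivalently acts as $n^{2}$), lift an $n$-torsion class to $H^2_{\fppf}(A,\mu _n)$ via the Kummer sequence, and, when $n$ is invertible in $k$, conclude from $H^2_{\et}(A,\mathbf{Z}/n)\simeq \bigwedge ^2H^1_{\et}(A,\mathbf{Z}/n)$ together with $[n]^*=n$ on $H^1$. Up to this point your argument is the paper's argument.

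The genuine gap is exactly the case the paper flags as the hard one: $p=\mathrm{char}(k)$ dividing $e$. Your first suggestion, an ``analogous description of the flat cohomology with finite flat group scheme coefficients,'' is not available and is in fact false as an exterior-algebra statement: over an algebraically closed field of characteristic $p$ one has $H^1_{\fppf}(A,\mu _p)=\Pic (A)[p]$, which has $p$-rank at most $g$ (and can even vanish, e.g.\ for a supersingular abelian variety with trivial $\mathrm{NS}[p]$), while $H^2_{\fppf}(A,\mu _p)$ contains $\mathrm{NS}(A)/p\neq 0$; so $H^2_{\fppf}\neq \bigwedge ^2H^1_{\fppf}$ — as you yourself concede a few lines later. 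Your fallback options (a theorem of the cube for $H^2(-,\mathbf{G}_m)$ ``for the same reasons as for $\Pic $,'' or factoring $[p]$ through Frobenius and Verschiebung) are named but not proved, and neither is routine: the cube theorem for line bundles rests on seesaw-type statements and the structure of the Picard functor that have no off-the-shelf analogue for $H^2(-,\mathbf{G}_m)$ in the flat topology, and the claim that $[-1]^*$ is trivial on the $p$-primary part would itself need the very flat-cohomology input you are missing. So the characteristic-$p$ case of the lemma is asserted rather than proved. The paper closes this by a concrete mechanism: the Hoobler sequence identifies $R\epsilon _*\mu _p[1]$ with the complex $(Z^1_A\xrightarrow{\id -C}\Omega ^1_A)$, producing a functorial injection $H^2_{\fppf}(A,\mu _p)\hookrightarrow H^2_{\text{dR}}(A)$ (injectivity being a cited result), and then $\bigwedge ^2H^1_{\text{dR}}\simeq H^2_{\text{dR}}$ with $[p]^*=p=0$ on $H^1_{\text{dR}}$ finishes the argument. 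To make your proof complete you would need either this de Rham embedding or a genuine proof of one of the alternatives you sketch.
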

\begin{proof}
It suffices to show that multiplication $[n]:A\rightarrow A$ on the abelian variety (where $n>0$ is a positive integer) induces multiplication by $n^2$ on $\text{Br}(A)$. 
For this it suffices, in turn, to consider the case when $n = \ell $ is a prime number.
If $\ell $ is invertible in the ground field  then this follows from the isomorphism $\bigwedge ^2H^1(A, \mathbf{F}_\ell )\simeq H^2(A, \mathbf{F}_\ell )$ (\cite[15.1]{MR861974} and the universal coefficient theorem) and the fact that $[\ell ]^*$ equals multiplication by $\ell $ on $H^1(A, \mathbf{F}_\ell )$.

For $\ell = p$ the argument, which we learned from a MathOverflow post\footnote{\url{https://mathoverflow.net/questions/363979/involution-action-on-brauer-group-of-an-abelian-variety}}, is more complicated.  Consider the ``Hoobler sequence'' \cite{Hoobler}
$$
\xymatrix{
0\ar[r]&\mathbf{G}_m/\mathbf{G}_m^p\ar[r]^-{\dlog }& Z^1_A\ar[r]^-{\text{id}-C}& \Omega ^1_A\ar[r]& 0,}
$$
where $Z^1_A$ is the sheaf of closed $1$-forms and $C$ is the Cartier operator.  This is a sheaf on the \'etale site of $A$. 
 Combining this with the Kummer sequence, which is a sheaf on the fppf site of $A$, 
$$
0\rightarrow \mu _p\rightarrow \mathbf{G}_m\rightarrow \mathbf{G}_m\rightarrow 0
$$
we find that if $\epsilon :A_{\text{fppf}}\rightarrow A_\et $ is the projection then
$$
R\epsilon _*\mu _p[1]\simeq (\xymatrix{Z^1_A\ar[r]^-{\text{id}-C}& \Omega ^1_A}).
$$
In particular, we get a natural map
$$
H^2(A, \mu _p)\rightarrow H^1(A, Z^1_A),
$$
and therefore also a map $H^2(A, \mu _p)\rightarrow H^2_{\text{dR}}(A)$ which is injective by \cite[1.2]{MR563467}. This inclusion is functorial in $A$, and now since $\bigwedge ^2H^1_{\text{dR}}(A)\simeq H^2_{\text{dR}}(A)$ we again conclude the result.
\end{proof}

\subsection{$\mathbf{G}_m$-gerbes via descent}

\begin{pg}
    Fix an isogeny $\tau :A'\rightarrow A$.  Let $\BR (A)$ denote the $2$-category of $\mathbf{G}_m$-gerbes over $A$, so that the set of isomorphisms classes of $\BR (A)$ is the Brauer group of $A$, and let $\BR (A/A')\subset \BR(A)$ be the sub-$2$-category of gerbes which are trivial (but not trivialized) over $A'$. We can describe the category $\BR (A/A')$ more explicitly via descent as follows.
\end{pg}

\begin{pg}
Define a $2$-category $\mc C$ as follows.

{\bf Objects.} Collections of data $(\gamma, g)$, where $\gamma :\Sigma \rightarrow \mls Pic ^0_{A'}$ is a functor and $g$ is an isomorphism as follows.   Let 
$\mls S$ be the line bundle on $A'_\Sigma $ obtained by pullback along $\gamma :\Sigma \rightarrow \mls Pic ^0_{A'}$ from the universal line bundle on $A'\times \mls Pic ^0_{A'}$.  Consider the maps
$$
p_i:A'_{\Sigma ^2}\rightarrow A'_\Sigma , \ \ (a', \sigma _1, \sigma _2)\mapsto (a', \sigma _i), \ \ i=1,2,
$$
$$
m:A'_{\Sigma ^2}\rightarrow A'_{\Sigma }, \ \ (a', \sigma _1, \sigma _2)\mapsto (a', \sigma _1+\sigma _2),
$$
$$
t_1:A'_{\Sigma ^2}\rightarrow A'_{\Sigma ^2}, \ \ (a', \sigma _1, \sigma _2)\mapsto (a'+\sigma _1, \sigma _1, \sigma _2).
$$
 Then $g$ is an isomorphism
$$
g:(t_1^*p_2^*\mls S)\otimes p_1^*\mls S\rightarrow m^*\mls S.
$$
This isomorphism is further required to satisfy the following cocycle condition.
Define maps
$$
m_{12}:A'_{\Sigma ^3}\rightarrow A'_{\Sigma ^2}, \ \ (a', \sigma ^{\prime \prime }, \sigma ', \sigma )\mapsto (a', \sigma ^{\prime \prime }+\sigma ', \sigma ),
$$
$$
m_{23}:A'_{\Sigma ^3}\rightarrow A'_{\Sigma ^2}, \ \ (a', \sigma ^{\prime \prime }, \sigma ', \sigma )\mapsto (a', \sigma ^{\prime \prime },\sigma '+ \sigma ),
$$
$$
\tilde t_1:A'_{\Sigma ^3}\rightarrow A'_{\Sigma ^3}, \ \ (a', \sigma ^{\prime \prime }, \sigma ', \sigma )\mapsto (a'+\sigma ^{\prime \prime }, \sigma ^{\prime \prime }, \sigma ', \sigma ),
$$
 for $1\leq i<j\leq 3$ let
$$
p_{ij}:A'_{\Sigma ^3}\rightarrow A'_{\Sigma ^2}
$$
be the map induced by the projection map $\Sigma ^3\rightarrow \Sigma ^2$ onto the $i$-th and $j$-th factors, and for $1\leq i\leq 3$ let
$$
\tilde p_i:A'_{\Sigma ^3}\rightarrow A'_\Sigma 
$$
be given by the $i$-th projection $\Sigma ^3\rightarrow \Sigma $.
The cocycle condition is then that the following diagram commutes:
\begin{equation}\label{E:cocycle}
\xymatrix{
\tilde t_1^*(p_{23}^*(t_1^*p_2^*\mls S)\otimes p_1^*\mls S)\otimes \tilde p_1^*\mls S\ar[r]^-{\simeq }\ar[d]^-{\tilde t_1^*p_{23}^*(g)}&m_{12}^*(t_1^*p_2^*\mls S)\otimes p_{12}^*(t_1^*p_2^*\mls S)\otimes p_{12}^*(p_1^*\mls S)\ar[d]^-{1\otimes p_{12}^*(g)}\\
\tilde t_1^*(p_{23}^*m^*\mls S)\otimes \tilde p_1^*\mls S\ar[d]^-{\simeq }& m_{12}^*(t_1^*p_2^*\mls S)\otimes p_{12}^*(m^*\mls S)\ar[d]^-{\simeq }\\
m_{23}^*(t_1^*p_2^*\mls S\otimes p_1^*\mls S)\ar[d]^-{m_{23}^*(g)}& m_{12}^*(t_1^*p_2^*\mls S\otimes p_1^*\mls S)\ar[d]^-{m_{12}^*(g)}\\
m_{23}^*m^*\mls S\ar[r]^-{\simeq }& m_{12}^*m^*\mls S.}
\end{equation}

Two such pairs  $(\gamma, g)$ and $(\gamma ', g')$ are defined to be equivalent, denoted $(\gamma, g)\sim (\gamma ', g')$, if there exists an isomorphism $u:\mls S\rightarrow \mls S'$ between the associated line bundles on $A'_\Sigma $ such that the diagram
$$
\xymatrix{
(t_1^*p_2^*\mls S)\otimes p_1^*\mls S\ar[d]^-{t_1^*p_2^*u\otimes p_1^*u}\ar[r]^-g& m^*\mls S\ar[d]^-{m^*u}\\
(t_1^*p_2^*\mls S')\otimes p_1^*\mls S'\ar[r]^-{g'}& m^*\mls S'}
$$
commutes.  Note that such an isomorphism $u$, which is equivalent to the data of an isomorphism of functors $\gamma \rightarrow \gamma '$, is unique if it exists.

{\bf Morphisms.}
For a line bundle $\mls M$ let $\mls U^{\mls M}$ denote the invertible sheaf on $A'_\Sigma $ given by
$$
\mls U^{\mls M}:= \rho ^*\mls M\otimes p_1^*\mls M^{-1},
$$
where $\rho :A'_\Sigma \rightarrow A'$ is the action map.  Then there is a canonical isomorphism
$$
v_{\mls M}:(t_1^*p_2^*\mls U^{\mls M})\otimes p_1^*\mls U^{\mls M}\rightarrow m^*\mls U^{\mls M},
$$
over $\Sigma ^2$.  On scheme-valued points $(\sigma ', \sigma )\in \Sigma ^2$ this is given by the isomorphism
$$
t_{\sigma '}^*(t_{\sigma }^*\mls M\otimes \mls M^{-1})\otimes t_{\sigma '}^*\mls M\otimes \mls M^{-1}\simeq t_{\sigma +\sigma '}^*\mls M\otimes \mls M^{-1}
$$
arising from the fact that $t_{\sigma '}^*t_{\sigma }^*(-)\simeq t_{\sigma +\sigma '}^*(-)$.  It follows from the definition that the analogue of the diagram \eqref{E:cocycle} for $(\mls U^{\mls M}, v_{\mls M})$ commutes.  Given an object $(\gamma, g)$ let $(\gamma ^{\mls M}, g^{\mls M})$ be the object with associated line bundle $\mls S\otimes \mls U^{\mls M}$ and $g^{\mls M}$ given by $g\otimes v_{\mls M}$.  We define the category of morphisms 
$$
\text{HOM}_{\mls C}((\gamma, g), (\gamma ', g'))
$$
to be the groupoid of line bundles $\mls M$ on $A'$ for which $(\gamma ^{\mls M}, g^{\mls M})\sim (g', \gamma ')$.

Composition is given by tensor product of line bundles.
\end{pg}

\begin{rem} The data of the pair $(\gamma, g)$ can also be described as follows.  The line bundle $\mls S$ is equivalent to specifying for any $k$-scheme $T$ and point $\sigma \in \Sigma (T)$ a line bundle $\mls S_\sigma $ on $A'_T$ functorially in $T$.  With this notation the data of $g$ amounts to an isomorphism
$$
g_{\sigma ', \sigma }:t_{\sigma '}^*\mls S_\sigma \otimes \mls S_{\sigma '}\simeq \mls S_{\sigma '+\sigma }
$$
for any two points $\sigma ', \sigma \in \Sigma (T)$.  The cocycle condition \eqref{E:cocycle} then amounts to the statement that for any three points $\sigma ^{\prime \prime }, \sigma ', \sigma \in \Sigma (T)$ the diagram
$$
\xymatrix{
t_{\sigma ^{\prime \prime *}}(t_{\sigma '}^*\mls S_\sigma \otimes \mls S_{\sigma '})\otimes \mls S_{\sigma ^{\prime \prime }}\ar[r]^-{t_{\sigma ^{\prime \prime }}^*g_{\sigma ', \sigma }}\ar[d]_-{\simeq }& t_{\sigma ^{\prime \prime }}^*\mls S_{\sigma '+\sigma}\otimes \mls S_{\sigma ^{\prime \prime }}\ar[rd]^-{g_{\sigma ^{\prime \prime }, \sigma '+\sigma }}\\
t_{\sigma ^{\prime \prime }+\sigma '}^*\mls S_\sigma \otimes t_{\sigma ^{\prime \prime }}^*\mls S_\sigma '\otimes \mls S_{\sigma ^{\prime \prime }}\ar[r]^-{g_{\sigma ^{\prime \prime }, \sigma '}}& t_{\sigma ^{\prime \prime }+\sigma '}^*\mls S_\sigma \otimes \mls S_{\sigma ^{\prime \prime }+\sigma '}\ar[r]^-{g_{\sigma ^{\prime \prime }+\sigma ', \sigma }}& \mls S_{\sigma ^{\prime \prime }+\sigma '+\sigma }}
$$
commutes, and the same holds after arbitrary base change $T'\rightarrow T$.
\end{rem}

\begin{pg} There is a functor
\begin{equation}\label{E:Gfunctor}
G:\mls C\rightarrow \BR (A/A')
\end{equation}
defined as follows.

Fix $(\gamma, g)$ with associated invertible sheaf $\mls S$.  Define a $\mathbf{G}_m$-gerbe $\mls X$ as follows.  For a $k$-scheme $T$ let $\mls X(T)$ be the groupoid of data $(P, f, \mls R, b)$, where $P\rightarrow T$ is a $\Sigma $-torsor, $f:P\rightarrow A'$ is a $\Sigma $-equivariant morphism, $\mls R$ is a line bundle on $P$, and 
$$
b:f_\Sigma ^*\mls S\simeq \rho ^*\mls R\otimes p^*\mls R^{-1}
$$
is an isomophism of line bundles ove $P_\Sigma $, where 
$\rho :P_\Sigma \rightarrow P$ (resp. $p:P_\Sigma \rightarrow P$) is the action map (resp. projection).  The isomorphism $b$ is futhermore required to be compatible with $g$ in the sense that for any two points $\sigma , \sigma '\in \Sigma (T)$ the diagram
$$
\xymatrix{
f^*(\sigma ^{\prime *}\mls S_\sigma \otimes \mls S_{\sigma '})\ar[r]^-{\sigma ^{\prime *}b_{\sigma }\otimes b_{\sigma '}}\ar[d]_-{f^*g_{\sigma ', \sigma }}& \sigma ^{\prime *}(\sigma ^*\mls R\otimes \mls R^{-1})\otimes \sigma ^{\prime *}\mls \otimes \mls R^{-1}\ar[d]^-{\simeq }\\
f^*\mls S_{\sigma '+\sigma }\ar[r]^-{b_{\sigma '+\sigma }}& (\sigma '+\sigma )^*\mls R\otimes \mls R^{-1}}
$$
commutes, and the sense holds after base change $T'\rightarrow T$ and points over $T'$.

A morphism
$$
(P, f, \mls R, b)\rightarrow (P',f', \mls R', b')
$$
is an isomorphism of $\Sigma $-torsors $\lambda :P\rightarrow P'$ such that $f'\circ \lambda = f$ and an isomorphism of line bundles $\lambda ^b:\lambda ^*\mls R'\simeq \mls R$ compatible with the isomorphisms $b$ and $b'$.

Note that since the $\Sigma $-action on $A'$ is faithful the maps $f$ and $f'$ are necessarily monomorphisms and therefore $\lambda $ is unique if it exists.  Furthermore, the isomorphisms $\lambda ^b$ is unique up to multiplication by an element  $u\in H^0(P, \mls O_P^*)$ satisfying $\rho ^*u=p^*u$ (because of the compatibility with $b$) which is equivalent to saying that $u$ is $\Sigma $-invariant and therefore an element of $H^0(T, \mls O_T^*)$.

This shows that $\mls X(T)$ is a groupoid and that the automorphisms of any object are canonically identified with $\mathbf{G}_m(T)$.   There is a natural map $\mls X\rightarrow A$ sending $(P, f, \mls R, b)$ over a scheme $T$ to the $T$-point $T = P/\Sigma \rightarrow A'/\Sigma = A$ defined by $f$. To verify that $\mls X$ is a gerbe over $A$ observe that an object $(P, f, \mls R, b)\in \mls X(T)$ can \'etale locally on $T$ be described as follows.  Replacing $T$ by an \'etale cover we may assume that $P$ is trivial.  Fixing such a trivialization we get an isomorphism $P\simeq \Sigma _T$ and $f$ corresponds simply to a point $a'\in A'(T)$.  Let $\mls R_0$ be the line bundle obtained by restricting $\mls R$ to the zero section of $\Sigma _T$.  Then we see that for any section $\sigma \in \Sigma (T)$ the map $b$ defines an isomorphism 
$$
\mls R_0\otimes a^{\prime *}\mls S_\sigma \simeq \mls R_\sigma,
$$
where $\mls R_\sigma $ is the fiber of $\mls R$ over $\sigma \in \Sigma \simeq P$.
It follows that $\mls R$ and $b$ are determined by this formula and $\mls R_0$, and the data $(P, f, \mls R, b)$ is determined simply by the $\Sigma $-orbit of $a'$; that is, the induced point of $A(T)$.
\end{pg}

\begin{lem} The gerbe $\mls X\times _AA'$ is trivial and therefore $\mls X\in \BR(A/A').$
\end{lem}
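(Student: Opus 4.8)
The plan is to prove that $\mls X\times_AA'$ is trivial by exhibiting a section of $\mls X\times_AA'\to A'$; the assertion $\mls X\in\BR(A/A')$ then follows immediately from the definition of $\BR(A/A')$. I keep the notation of the construction of $\mls X$ from the pair $(\gamma,g)$, with $\mls S$ the associated line bundle on $A'_\Sigma$. Since $\mls X$ is a stack, a section of $\mls X\times_AA'\to A'$ is the same thing as an object $(P,f,\mls R,b)\in\mls X(A')$ whose induced point of $A$ (in the notation of the construction of $G$) is the morphism $\tau\colon A'\to A$ itself. So the whole proof will consist of writing down such a quadruple, naturally in the base.

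For $P$ I would take $A'\times_AA'$, with torsor structure map $P\to A'$ the second projection $\mathrm{pr}_2$ (here $\Sigma=\ker(\tau)$ acts on the first factor by translation, with quotient $A$), and with $f\colon P\to A'$ the first projection $\mathrm{pr}_1$, which is $\Sigma$-equivariant; the induced morphism $P/\Sigma\to A$ is then $\tau$, as required. The diagonal $A'\to A'\times_AA'$ is a section of $\mathrm{pr}_2$, so $P$ is the trivial $\Sigma$-torsor, and the resulting isomorphism $\Sigma\times A'\simeq P$ sends $(\xi,u)$ to $(u+\xi,u)$. I also record the difference morphism $\delta\colon A'\times_AA'\to\Sigma$, $(x,y)\mapsto x-y$, which corresponds to the first projection $\Sigma\times A'\to\Sigma$ under this identification.

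Next I would set $\mls R:=(\mathrm{pr}_2,\delta)^*\mls S$ on $P$ — equivalently, the pullback of the universal line bundle on $A'\times\mls Pic^0_{A'}$ along $(\mathrm{pr}_2,\gamma\circ\delta)$ — and build $b\colon f_\Sigma^*\mls S\simeq\rho^*\mls R\otimes p^*\mls R^{-1}$ by assembling it from $g$, slice by slice. Indeed, along $\{\xi\}\times A'\subset\Sigma\times A'\simeq P$ one has $(t_\sigma^*\mls R\otimes\mls R^{-1})|_{\{\xi\}\times A'}\simeq\gamma(\xi+\sigma)\otimes\gamma(\xi)^{-1}$ while $(f^*\mls S_\sigma)|_{\{\xi\}\times A'}\simeq t_\xi^*\gamma(\sigma)$, and since $\gamma(\sigma)\in\mls Pic^0_{A'}$ the isomorphism $g_{\xi,\sigma}\colon t_\xi^*\gamma(\sigma)\otimes\gamma(\xi)\simeq\gamma(\xi+\sigma)$ supplies the required identification; these assemble into a global $b$. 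It then remains to check that $(P,f,\mls R,b)$ really defines an object of $\mls X(A')$, i.e. that $b$ is compatible with $g$ in the sense demanded in the construction of $\mls X$; unwinding the definitions, this compatibility amounts precisely to the cocycle condition \eqref{E:cocycle} for $g$, together with the standard coherence (theorem of the cube) of the canonical isomorphisms $m^*L\simeq p_1^*L\otimes p_2^*L$ for $L\in\mls Pic^0_{A'}$. Everything above is manifestly functorial in the base, so the quadruple yields the desired section.

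The main obstacle is that last compatibility check: the diagram chase identifying the coherence of $b$ with \eqref{E:cocycle}. The remaining ingredients — the choice of $P$, its trivialization by the diagonal, the construction of $\mls R$ as an honest line bundle, and naturality in the base — are essentially formal. Conceptually nothing deep is going on: $(\mls S,g)$ is by design a descent datum for a $\mathbf{G}_m$-gerbe on $A=A'/\Sigma$ trivialized over $A'$, so $\mls X\times_AA'$ is the trivial gerbe by construction, and the quadruple $(P,f,\mls R,b)$ is just an explicit incarnation of that trivialization.
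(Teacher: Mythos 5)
Your argument is correct, but it takes a different route from the paper. You prove triviality by exhibiting an explicit section of $\mls X\times_AA'\rightarrow A'$: the object $(P,f,\mls R,b)\in\mls X(A')$ with $P=A'\times_AA'$ trivialized by the diagonal, $f=\mathrm{pr}_1$, $\mls R=(\mathrm{pr}_2,\delta)^*\mls S$, and $b$ obtained from $g$; since $\mls X$ is a $\mathbf{G}_m$-gerbe, a gerbe over $A'$ with a section is isomorphic to $B\mathbf{G}_{m,A'}$, so this suffices. The paper instead does not construct a section at all: it observes that for any object $(P,f,\mls R,b)$ over $T$ the torsor $P$ is recovered as $A'\times_{A,a}T$, so that $\mls X\times_AA'$ is the stack of such data together with a trivialization of $P$, and then reuses the local analysis already carried out in the construction of the functor $G$ (once $P$ is trivialized, $(f,\mls R,b)$ is determined by the point $a'\in A'(T)$ and the line bundle $\mls R_0$ on $T$) to get the full identification $\mls X\times_AA'\simeq B\mathbf{G}_{m,A'}$ in one stroke. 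What the paper's route buys is that the compatibility check you flag as the main obstacle never arises, because the relevant bookkeeping was done once in the preceding paragraph; what your route buys is an explicit trivializing object, which makes visible that $(\mls S,g)$ is literally a descent datum trivialized over $A'$. Two small refinements to your write-up: since $\Sigma=\ker(\tau)$ may be a non-reduced finite group scheme (e.g.\ in characteristic $p$), the ``slice by slice over $\xi\in\Sigma$'' construction of $b$ should be phrased scheme-theoretically, namely as the pullback of the universal $g$ along the map $P_\Sigma\rightarrow A'_{\Sigma^2}$, $((x,y),\sigma)\mapsto (y,\,x-y,\,\sigma)$, which also makes the required compatibility of $b$ with $g$ exactly the pullback of the cocycle diagram \eqref{E:cocycle}; in particular no appeal to the theorem of the cube is needed, and the parenthetical invoking $\gamma(\sigma)\in\mls Pic^0_{A'}$ plays no role.
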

\begin{proof}
    Indeed observe that for an object $(P, f, \mls R, b)\in \mls X(T)$ over a scheme $T$ with associated point $a\in A(T)$, the torsor $P$ can be recovered as the fiber product $A'\times _{A, a}T$.  So $\mls X\times _AA'$ can be viewed as the stack which to any scheme $T$ associates the groupoid of data $(P, f, \mls R, b)$ together with a trivialization of $P$.  As we saw in the previous discussion such data is equivalent to a line bundle $\mls R_0$ on $T$, which defines an isomorphism $\mls X\times _AA'\simeq B\mathbf{G}_{m, A'}$.
\end{proof}

\begin{prop} The functor \eqref{E:Gfunctor} is an equivalence of $2$-categories.
\end{prop}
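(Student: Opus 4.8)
The plan is to recognize the $2$-category $\mls C$ as an explicit model for fppf $2$-descent of $\mathbf{G}_m$-gerbes along the torsor $\tau\colon A'\to A$, where $\Sigma:=\ker(\tau)$ (acting on $A'$ by translation, so that $A'\to A'/\Sigma=A$ is a $\Sigma$-torsor), and to identify $G$ with the functor that reconstructs a gerbe from its descent datum. Because $\mathbf{G}_m$-gerbes form a $2$-stack for the fppf topology and every object of $\BR(A/A')$ is, by definition, trivializable after pullback to $A'$, descent along $\tau$ yields an equivalence between $\BR(A/A')$ and the $2$-category $\mathrm{Desc}(\tau)$ of pairs $(\phi,g)$: here $\phi$ is an isomorphism of $\mathbf{G}_m$-gerbes between the two pullbacks of $B\mathbf{G}_{m,A'}$ to $A'\times_A A'$ inducing the identity on $\mathbf{G}_m$, and $g$ is a $2$-isomorphism over $A'\times_A A'\times_A A'$ witnessing the cocycle identity for $\phi$, subject to the standard coherence over the fourfold fibre product; a $1$-morphism $(\phi,g)\to(\phi',g')$ is a line bundle $\mls M$ on $A'$ (an automorphism of $B\mathbf{G}_{m,A'}$ over $\mathbf{G}_m$) together with a $2$-isomorphism over $A'\times_A A'$ intertwining $\phi$ and $\phi'$ compatibly with $g,g'$, and $2$-morphisms are automorphisms of such an $\mls M$ compatible with the intertwiners.

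Next I would run the resulting dictionary. The torsor structure gives compatible isomorphisms $A'\times_A A'\simeq A'_\Sigma$, $A'\times_A A'\times_A A'\simeq A'_{\Sigma^2}$, and $A'\times_A A'\times_A A'\times_A A'\simeq A'_{\Sigma^3}$ under which the face and degeneracy maps of the \v{C}ech nerve of $\tau$ become exactly the maps $p_i$, $m$, $t_1$, $\tilde t_1$, $m_{12}$, $m_{23}$, $p_{ij}$, $\tilde p_i$ used in the definition of $\mls C$. Since an automorphism of a trivial $\mathbf{G}_m$-gerbe inducing the identity on $\mathbf{G}_m$ is the same datum as tensoring by a line bundle, with composition corresponding to tensor product, the descent datum $\phi$ is the same as a line bundle $\mls S$ on $A'_\Sigma$, equivalently a morphism $\gamma\colon\Sigma\to\mls{Pic}_{A'}$; the cocycle $2$-isomorphism $g$ then becomes an isomorphism $(t_1^*p_2^*\mls S)\otimes p_1^*\mls S\simeq m^*\mls S$, and its coherence over the fourfold product is precisely \eqref{E:cocycle}. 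Likewise a $1$-morphism in $\mathrm{Desc}(\tau)$ is a line bundle $\mls M$ on $A'$ together with an intertwiner, and unwinding shows the intertwiner exists (compatibly with $g,g'$) exactly when twisting $\mls S$ by the canonical coboundary $\mls U^{\mls M}=\rho^*\mls M\otimes p_1^*\mls M^{-1}$ with its cocycle $v_{\mls M}$ carries $(\gamma,g)$ to $(\gamma',g')$ in the sense of the relation $\sim$ — this is the definition of $\mathrm{HOM}_{\mls C}$ — while $2$-morphisms are elements of $H^0(A',\mathcal O_{A'}^{\ast})=k^{\ast}=H^0(A,\mathcal O_A^{\ast})$, using that $A,A'$ are proper and geometrically connected. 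Finally one checks that the Picard classes which occur lie in $\mathrm{Pic}^0(A')$: translations act trivially on the torsion-free N\'eron--Severi group, so $\mls U^{\mls M}_\sigma\in\mathrm{Pic}^0$, and iterating $g$ along a point $\sigma$ of finite order $n$ gives $\mls S_\sigma^{\otimes n}\simeq\bigotimes_{i=0}^{n-1}t_{i\sigma}^{\ast}\mls S_\sigma$, trivial in $\mathrm{NS}$, so $\mls S_\sigma\in\mathrm{Pic}^0$; thus $\gamma$ factors through $\mls{Pic}^0_{A'}$ and the dictionary is an equivalence $\mathrm{Desc}(\tau)\xrightarrow{\ \sim\ }\mls C$.

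It then remains to check that $G$ is, up to canonical isomorphism, the composite of the reconstruction equivalence $\mathrm{Desc}(\tau)\xrightarrow{\sim}\BR(A/A')$ with the inverse of this dictionary. This is a matter of unwinding $\mls X=G(\gamma,g)$: a $\Sigma$-equivariant pair $(P,f)$ with $P\to T$ a $\Sigma$-torsor and $f\colon P\to A'$ is the same as a $T$-point $a$ of $A=A'/\Sigma$ together with the tautological lift $P=A'\times_{A,a}T$, and the remaining data $(\mls R,b)$ — a line bundle $\mls R$ on $P$ with an isomorphism $b\colon f_\Sigma^{\ast}\mls S\simeq\rho^{\ast}\mls R\otimes p^{\ast}\mls R^{-1}$ compatible with $g$ — is precisely an object, over that point, of the gerbe obtained by gluing $B\mathbf{G}_{m,A'}$ along $\phi=(-\otimes\mls S)$, together with a gluing isomorphism satisfying the cocycle condition imposed by $g$; the triviality of $\mls X\times_A A'$ is the preceding lemma. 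As $G$ visibly carries morphisms and $2$-morphisms to the corresponding ones, it agrees with the reconstruction functor and hence is an equivalence of $2$-categories.

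I expect the principal obstacle to be the bookkeeping in the second step: verifying term by term that the large diagram \eqref{E:cocycle}, and the diagrams defining $\sim$ and the morphism groupoids, are literally the $2$-cocycle coherence of fppf $2$-descent transported across the identifications $A'\times_A A'\simeq A'_\Sigma$ — keeping careful track of the simplicial combinatorics and of the difference between automorphisms of $B\mathbf{G}_m$ over $\mathbf{G}_m$ and arbitrary automorphisms — together with the elementary but not purely formal check that descent data automatically take values in the identity component of the Picard functor.
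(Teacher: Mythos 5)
Your proposal is correct in substance and reaches the statement by a more formal route than the paper. The paper does not invoke the general fppf $2$-descent theorem for the $2$-stack of banded $\mathbf{G}_m$-gerbes: given $\mls X\in \BR (A/A')$ and a chosen trivialization $\tau \colon \mls X\times _AA'\simeq B\mathbf{G}_{m,A'}$, it transports the $\Sigma$-action on $A'$ over $A$ through $\tau$ to a map $\Sigma \rightarrow \cAut _{B\mathbf{G}_{m,A'}}$, reads off $\gamma$ from the Picard component and $g$ from compatibility with composition (this is exactly your descent datum $\phi$ under the identification of banded automorphisms of the trivial gerbe with line bundles), checks directly that $G$ applied to this pair recovers $\mls X$ (essential surjectivity), and gets full faithfulness by noting that any two choices of $\tau$ differ by a line bundle $\mls M$ on $A'$ and that this changes $(\gamma ,g)$ precisely by the twist $(\mls U^{\mls M},v_{\mls M})$ entering the definition of $\mathrm{HOM}_{\mls C}$. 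Your version buys conceptual transparency---\eqref{E:cocycle} and the relation $\sim$ become the literal \v{C}ech coherence of $2$-descent transported along $A'\times _AA'\simeq A'_{\Sigma}$---at the price of the simplicial bookkeeping you flag and of quoting the $2$-stack property, while the paper's direct construction is shorter but leaves more implicit. Two concrete points of comparison: your N\'eron--Severi argument (translations act trivially on $\mathrm{NS}$, and iterating $g$ over a point killed by the order of $\Sigma$ shows the class of $\mls S_\sigma$ is torsion in $\mathrm{NS}$, hence lies in $\Pic ^0$) is a genuine addition, since the paper silently builds this into the displayed isomorphism $\cAut _{B\mathbf{G}_{m,A'}}\simeq A'\times \mls Pic ^0_{A'}$ whereas a priori the descent datum only maps to $\mls Pic _{A'}$. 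On the other hand, when matching $1$-morphisms you should carry the intertwining $2$-isomorphism as data rather than as a mere existence condition: a witness $u$ for $\sim$ is unique only up to a homomorphism $\Sigma \rightarrow \mathbf{G}_m$ (a unit $c$ on $A'_{\Sigma}$ compatible with $g,g'$ satisfies $c(\sigma +\sigma ')=c(\sigma )c(\sigma ')$), so identifying your descent morphisms (a line bundle $\mls M$ \emph{together with} an intertwiner) with $\mathrm{HOM}_{\mls C}$ as literally defined (just $\mls M$ with a property) requires attention---already for the trivial gerbe the two bookkeepings differ by $\ker \bigl(\tau ^*\colon \Pic (A)\rightarrow \Pic (A')\bigr)\simeq \mathrm{Hom}(\Sigma ,\mathbf{G}_m)$. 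This is a shared subtlety rather than a defect of your route, since the paper's full-faithfulness step leans on the same asserted uniqueness of $u$.
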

\begin{proof}
Let us first show that every object is in the essential image.  Let $\mls X\in \BR (A/A')$ be an object, and fix a trivialization $\tau :\mls X\times _AA'\simeq B\mathbf{G}_{m, A'}.$  The action of $\Sigma $ on $A'$ over $A$ defines an action of $\Sigma $ on $\mls X\times _AA'$ via the second factor, and therefore using $\tau $ we get a map
$$
\Sigma \rightarrow \cAut _{B\mathbf{G}_{m, A'}}\simeq A'\times \mls Pic^0_{A'}.
$$
Define $\gamma _{(\mls X, \tau )}$ to be the second factor of this map.    If we write this map on scheme-valued points as 
$$
\sigma \mapsto (\sigma  , \mls S_\sigma )
$$
then the compatibility with composition is given by functorial isomorphisms
$$
g_{\sigma ', \sigma }:t_{\sigma '}^*\mls S_\sigma \otimes \mls S_{\sigma '}\simeq \mls S_{\sigma '+\sigma },
$$
satisfying the cocycle condition.  
That is, we get an object $(\gamma, g)\in \mls C$.  Furthermore, it is straightfoward to verify that the associated gerbe of this data is isomorphic to $\mls X$.  In fact, this construction shows that $(\gamma , g)$ is uniquely associated to the data $(\mls X, \tau )$. Two different choices of $\tau $ differ by a line bundle on $\mls M$, and it follows from the construction that this action of $\mls Pic_{A'}$ is compatible with the action on $\mls C$, which implies the full faithfulness as well.
\end{proof}

\subsection{A description of $D(A, \alpha )$}

Continuing with the preceding notation, fix a gerbe $\mls X\in \BR(A/A')$ and a trivialization $\tau :\mls X_{A'}\simeq B\mathbf{G}_{m, A'}$ defining a pair $(\gamma , g)$.

The associated line bundle $\mls S$ on $A'_\Sigma $ can also be characterized as follows.
For a $k$-scheme $T$  and $\sigma \in \Sigma (T)$ the following diagram is $2$-commutative
$$
\xymatrix{
B\mathbf{G}_{m, A'_T}\ar[r]^-{\tau }\ar[d]_-{(\sigma, \mls S_\sigma )}& \mls X\times _A{A'_T}\ar[d]^-{\text{id}\times \sigma }\\
B\mathbf{G}_{m, A'_T}\ar[r]^-{\tau }& \mls X\times _A{A'_T}.}
$$
Using \ref{P:4.1} we find that for a $1$-twisted quasi-coherent sheaf $\mls F$ on $\mls X_T$ we have a specified isomorphism
$$
v_\sigma :t_\sigma ^*\tau ^*\mls F\simeq \tau ^*\mls F\otimes \pi ^*\mls S_\sigma .
$$
Furthermore, if $\sigma '\in \Sigma (T)$ is a second point then the diagram
\begin{equation}\label{E:composition}
\xymatrix{
t_{\sigma '}^*t_\sigma ^*\tau  ^*\mls F\ar[r]^-{t_{\sigma '}^*v_\sigma }\ar[d]^-{\simeq}& t_{\sigma '}^*\tau  ^*\mls F\otimes \pi ^*t_{\sigma '}^*\mc L_\sigma \ar[r]^-{v_{\sigma '}}& \tau  ^*\mls F\otimes \pi ^*(t_{\sigma '}^*\mc L_\sigma \otimes \mc L_{\sigma '})\ar[d]^-{\gamma _{\sigma ', \sigma}}\\
t_{\sigma +\sigma '}^*\tau  ^*\mls F\ar[rr]^-{v_{\sigma +\sigma '}}&& \tau  ^*\mls F\otimes  \mc L_{\sigma +\sigma '}}
\end{equation}
commutes.

\begin{pg}
    The universal case ($T = \Sigma $ and $\sigma $ the identity map $\Sigma \rightarrow \Sigma $) of the preceding discussion gives the following. Let 
    $$
    T :B\mathbf{G}_{m, A'_\Sigma }\rightarrow B\mathbf{G}_{m, A'_\Sigma }
    $$
    be the universal translation map and let $\mls S$ be the line bundle on $A'_\Sigma $ obtained by pullback along $\gamma :\Sigma \rightarrow \mls Pic ^0_{A'}$ from the universal line bundle on $A'\times \mls Pic ^0_{A'}$.  We then have an isomorphism
    $$
    T^*(\tau  ^*\mls F)\simeq \tau  ^*\mls F\otimes \pi ^*\mls S
    $$
    over $B\mathbf{G}_{m, A'_\Sigma },$ and this map satisfies the compatibility with composition over $\Sigma ^2$ given by the diagram \eqref{E:composition}.  Twisting $\tau  ^*\mls F$ by the inverse of the standard character we get a quasi-coherent sheaf $\mls G$ on $A'_\Sigma $ with an isomorphism 
    $$
    V:t^*\mls G\simeq \mls G\otimes \mls S
    $$
    on $A'_\Sigma $, again satisfying the cocycle condition over $A'_{\Sigma ^2}$, where $t:A'\times \Sigma \rightarrow A'$ is the action map.
\end{pg}

\begin{pg} Let $\mls U$ denote the category of pairs $(\mls G, V)$, where $\mls G$ is a  quasi-coherent sheaf on $A'$ and
$$
V:t^*(\mls G|_{A'_\Sigma })\rightarrow \mls G|_{A'_\Sigma }\otimes \mls S
$$
is an isomorphism over $A'_\Sigma $ such that the cocycle condition holds over $\Sigma ^2$.  Morphisms $(\mls G, V)\rightarrow (\mls G', V')$ are morphisms of quasi-coherent sheaves $\mls G\rightarrow \mls G'$ respecting the isomorphisms $V$ and $V'$.

The preceding discussion defines a functor
\begin{equation}\label{E:descentfunctor}
\text{Qcoh}(\mls X)^{(1)}\rightarrow \mls U.
\end{equation}
\end{pg}

\begin{prop} The functor \eqref{E:descentfunctor} is an equivalence of categories.
\end{prop}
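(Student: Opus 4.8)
The plan is to recognize \eqref{E:descentfunctor} as an instance of fppf descent along the isogeny $\tau\colon A'\to A$. Write $\Sigma=\ker(\tau)$, a finite locally free $k$-group scheme acting freely on $A'$ by translation with $A'/\Sigma\simeq A$, so that $(a_1,a_2)\mapsto (a_1,a_2-a_1)$ gives isomorphisms $A'\times_AA'\simeq A'_\Sigma$ and $A'\times_AA'\times_AA'\simeq A'_{\Sigma^2}$ under which the two projections, the multiplications $m$, $m_{12}$, $m_{23}$ and the translations $t_1,\tilde t_1$ become exactly the maps appearing in the definitions of $\mls C$ and $\mls U$. Base changing the gerbe, $h\colon \mls X_{A'}\to \mls X$ is a representable, finite, faithfully flat morphism, hence an fppf cover, with $\mls X_{A'}\times_{\mls X}\mls X_{A'}\simeq \mls X_{A'_\Sigma}$ and threefold fiber product $\simeq \mls X_{A'_{\Sigma^2}}$.

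First I would invoke fppf descent for quasi-coherent sheaves on algebraic stacks: pullback along $h$ identifies $\text{Qcoh}(\mls X)$ with the category of descent data $(\mls F_0,\theta)$, where $\mls F_0\in \text{Qcoh}(\mls X_{A'})$ and $\theta$ is an isomorphism between the two pullbacks of $\mls F_0$ to $\mls X_{A'_\Sigma}$ satisfying the usual cocycle identity over $\mls X_{A'_{\Sigma^2}}$. Since the $\mathbf{G}_m$-weight decomposition on a $\mathbf{G}_m$-gerbe is compatible with arbitrary pullback, this restricts to an equivalence between $\text{Qcoh}(\mls X)^{(1)}$ and the descent data with $\mls F_0$ $1$-twisted. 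Next I would use the trivialization $\tau\colon \mls X_{A'}\simeq B\mathbf{G}_{m,A'}$ to identify $\text{Qcoh}(\mls X_{A'})^{(1)}\simeq \text{Qcoh}(A')$, so that $\mls F_0$ becomes a quasi-coherent sheaf $\mls G$ on $A'$. Over $\mls X_{A'_\Sigma}$ the two pulled-back trivializations — along the projection $A'_\Sigma\to A'$ and along the action map $t\colon A'_\Sigma\to A'$ — of the same gerbe differ by a line bundle on $A'_\Sigma$, and by construction this line bundle is precisely $\mls S$ (this is exactly how $\gamma$ was extracted from the $\Sigma$-action on $\mls X_{A'}$ via $\cAut_{B\mathbf{G}_{m,A'}}$). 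By \ref{P:4.1} the corresponding automorphism of $B\mathbf{G}_{m,A'_\Sigma}$ acts on $1$-twisted sheaves by $\otimes\mls S$, so $\theta$ translates into exactly an isomorphism $V\colon t^*(\mls G|_{A'_\Sigma})\rightarrow \mls G|_{A'_\Sigma}\otimes\mls S$; and the descent cocycle identity translates, via the composition compatibility of the $\mls S_\sigma$ recorded by $g$ (the diagram \eqref{E:composition}), into precisely the cocycle condition over $\Sigma^2$ defining $\mls U$. Unwinding the identifications, the resulting equivalence $\text{Qcoh}(\mls X)^{(1)}\simeq \mls U$ is the functor \eqref{E:descentfunctor}, which was defined by carrying out this very bookkeeping; on morphisms both sides consist of morphisms of the underlying quasi-coherent sheaves compatible with the extra structure, so one obtains an equivalence of categories.

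The main obstacle I expect is the dictionary in the last step: one must check carefully that the discrepancy between the two pulled-back trivializations on $\mls X_{A'_\Sigma}$ is exactly $\mls S$ (with the correct direction of twist), and that the $2$-cocycle $g$ over $A'_{\Sigma^2}$ is precisely the obstruction to these trivializations being compatible on the triple overlap, so that the abstract descent cocycle becomes \eqref{E:cocycle}. This is essentially the reverse of the construction of the functor $G$ in the previous subsection, so once that bookkeeping is in place the equivalence is formal; everything else (fppf descent, functoriality of the weight decomposition, and the trivial-gerbe case) is standard.
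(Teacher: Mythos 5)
Your proposal is correct and follows essentially the same route as the paper: the paper also proves this by descent along $A'\to A$, noting that $\mls X$ is the quotient of $B\mathbf{G}_{m,A'}$ by $\Sigma$ and that (after twisting by the standard character) $\mls U$ is exactly the category of $\Sigma$-linearized $1$-twisted sheaves on $B\mathbf{G}_{m,A'}$, which is the equivariant reformulation of your descent-data bookkeeping via \ref{P:4.1}. Your write-up just makes the cocycle translation more explicit than the paper's brief argument.
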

\begin{proof}
Since $A$ is the quotient of $A'$ by $\Sigma $ it follows that $\mls X$ is the quotient of $B\mathbf{G}_{m, A'}$, which is schematic over $\mls X$, by the action of $\Sigma $. By construction, tensoring with the standard character identifies the category $\mls U$ with the category of $\Sigma $-linearized $1$-twisted sheaves on $B\mathbf{G}_{m, A'}$.  From this and descent theory, the result follows.
\end{proof}

\begin{cor} The bounded derived category $D^b(\mls U)$ is equivalent to $D(A, \alpha )$.
\end{cor}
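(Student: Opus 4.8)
The plan is to bootstrap the corollary from the equivalence of abelian categories just established, together with the elementary fact that an exact equivalence of abelian categories induces an equivalence of bounded derived categories, and the identification of $D(A,\alpha)$ with $D^b(\coh(\cX)^{(1)})$. Throughout I read $\mls U$ (hence $D^b(\mls U)$) as restricted to pairs $(\mls G, V)$ with $\mls G$ coherent, matching the convention $D(A,\alpha)=D(\cX)^{(1)}\subset D^b(\coh(\cX))$.

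First I would observe that the functor \eqref{E:descentfunctor} is exact: it is built from the pullback $\tau^*$ along the isomorphism $\cX_{A'}\simeq B\mathbf{G}_{m,A'}$, a twist by a character, and $\Sigma$-descent, all of which are exact, and $\mls U$ is abelian with kernels and cokernels computed on the underlying sheaf on $A'$. So the preceding proposition upgrades to an equivalence of abelian categories $\qcoh(\cX)^{(1)}\simeq\mls U$; since the composite $\cX_{A'}\to\cX$ of $\tau^{-1}$ with the projection is finite faithfully flat, and coherence of a $1$-twisted sheaf may be checked fppf-locally, this equivalence carries coherent objects to coherent objects in both directions, hence restricts to $\coh(\cX)^{(1)}\simeq\mls U_{\coh}$. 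An exact equivalence of abelian categories is an equivalence on homotopy categories of bounded complexes preserving quasi-isomorphisms, so it descends to an equivalence of triangulated categories $D^b(\coh(\cX)^{(1)})\simeq D^b(\mls U_{\coh})$.

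It then remains to identify $D^b(\coh(\cX)^{(1)})$ with $D(A,\alpha)$. Because $\cX$ is a $\mathbf{G}_m$-gerbe the abelian category decomposes as $\coh(\cX)=\bigoplus_{n\in\mathbf{Z}}\coh(\cX)^{(n)}$, the projection $F\mapsto F_{(n)}$ being exact and two-sided adjoint to the inclusion of the $n$-th summand; consequently $D^b(\coh(\cX)^{(n)})\hookrightarrow D^b(\coh(\cX))=D(\cX)$ is fully faithful with essential image the full subcategory of complexes whose cohomology lies in $\coh(\cX)^{(n)}$, namely $D(\cX)^{(n)}$. Taking $n=1$ yields $D^b(\coh(\cX)^{(1)})\simeq D(\cX)^{(1)}=D(A,\alpha)$, and the corollary follows by composing with the equivalence of the previous paragraph.

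The only step requiring real care is the compatibility of the finiteness conditions under the equivalence of the preceding proposition — that it is coherent-to-coherent in both directions — which, as indicated, reduces to fppf-locality of coherence and its preservation and reflection along the finite flat cover $\cX_{A'}\to\cX$ together with the character twist. Everything else is formal bookkeeping.
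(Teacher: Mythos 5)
Your proposal is correct and follows essentially the same route as the paper, which states the corollary as an immediate consequence of the preceding proposition (equivalence $\mathrm{Qcoh}(\mls X)^{(1)}\simeq \mls U$): you simply make explicit the routine bookkeeping the paper suppresses, namely that the equivalence is exact and matches coherent objects on both sides (via the finite flat cover $\mls X_{A'}\to\mls X$), and that $D^b(\coh(\mls X)^{(1)})\simeq D(\mls X)^{(1)}=D(A,\alpha)$ via the weight decomposition. The only interpretive choice — reading $\mls U$ (or $D^b(\mls U)$) with the coherence condition so as to match $D(A,\alpha)$ being the bounded derived category of \emph{coherent} twisted sheaves — is the right one and is exactly what the statement implicitly requires.
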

\qed

\bibliographystyle{amsplain}
\bibliography{bibliography}{}

\providecommand{\bysame}{\leavevmode\hbox to3em{\hrulefill}\thinspace}
\providecommand{\MR}{\relax\ifhmode\unskip\space\fi MR }
\providecommand{\MRhref}[2]{%
  \href{http://www.ams.org/mathscinet-getitem?mr=#1}{#2}
}
\providecommand{\href}[2]{#2}
\begin{thebibliography}{1}

\bibitem{bondal1995semiorthogonaldecompositionalgebraicvarieties}
A.~Bondal and D.~Orlov, \emph{Semiorthogonal decomposition for algebraic varieties}, {\tt arxiv:9506012}, 1995.

\bibitem{MR2480703}
S.~Brochard, \emph{Foncteur de {P}icard d'un champ alg\'ebrique}, Math. Ann. \textbf{343} (2009), no.~3, 541--602. \MR{2480703}

\bibitem{Hoobler}
R.~T. Hoobler, \emph{Cohomology of purely inseparable {G}alois coverings}, J. Reine Angew. Math. \textbf{266} (1974), 183--199. \MR{364258}

\bibitem{MR2309155}
M.~Lieblich, \emph{Moduli of twisted sheaves}, Duke Math. J. \textbf{138} (2007), no.~1, 23--118. \MR{2309155}

\bibitem{MR861974}
J.~S. Milne, \emph{Abelian varieties}, Arithmetic geometry ({S}torrs, {C}onn., 1984), Springer, New York, 1986, pp.~103--150. \MR{861974}

\bibitem{MR204427}
D.~Mumford, \emph{On the equations defining abelian varieties. {I}}, Invent. Math. \textbf{1} (1966), 287--354. \MR{204427}

\bibitem{MR563467}
A.~Ogus, \emph{Supersingular {$K3$}\ crystals}, Journ\'ees de {G}\'eom\'etrie {A}lg\'ebrique de {R}ennes ({R}ennes, 1978), {V}ol. {II}, Ast\'erisque, vol.~64, Soc. Math. France, Paris, 1979, pp.~3--86. \MR{563467}

\bibitem{Rouquier}
R.~Rouquier, \emph{Automorphismes, graduations et cat\'{e}gories triangul\'{e}es}, J. Inst. Math. Jussieu \textbf{10} (2011), no.~3, 713--751. \MR{2806466}

\bibitem{stacks-project}
The {Stacks Project Authors}, \emph{\textit{Stacks Project}}, \url{https://stacks.math.columbia.edu}, 2018.

\end{thebibliography}

\end{document}